\numberwithin{equation}{section}
\theoremstyle{plain}
\newtheorem{thm}{Theorem}[section]
\newtheorem{prop}[thm]{Proposition}
\newtheorem{lemma}[thm]{Lemma}
\newtheorem{definition}[thm]{Definition}
\theoremstyle{definition}
\newtheorem{remark}[thm]{Remark}
\newtheorem{example}[thm]{Example}
\numberwithin{equation}{section}	
\newcommand{\ov}{\overline}
\newcommand{\wt}{\widetilde}
\newcommand{\e}{\varepsilon}
\newcommand{\ep}{\epsilon}
\newcommand{\vphi}{\varphi}
\def\loc{{\rm loc}}
\def\hom{{\rm hom}}
\def\eff{{\rm eff}}
\def\pot{{\rm pot}}
\def\sol{{\rm sol}}
\DeclareMathOperator*{\argmin}{arg\,min}
\newcommand{\tomega}{{\wt{\omega}}}
\newcommand{\N}{\mathbb{N}}
\newcommand{\R}{\mathbb{R}}
\newcommand{\domain}{D}
\def\div{{\operatorname{div}}}
\newcommand{\divv}{\nabla\cdot}
\newcommand{\curl}{\operatorname{curl}} 
\newcommand{\dx}{\,\mathrm{d}x}
\newcommand{\f}{\boldsymbol}
\newcommand{\m}{\,\mathrm{d}}				
\newcommand{\Id}{\operatorname{Id}}
\newcommand{\Ev}{\mathbb{E}}				
\def\P{{\mathbb{P}}}			        	
\newcommand{\dP}{\,\mathrm{d}\P}
\newcommand{\dPo }{\,\mathrm{d}\P(\omega)}
\newcommand{\muz}{{\mu_0}}			    	
\newcommand{\bchi}{{\boldsymbol\chi}}		
\newcommand{\Rt}{\R^{3}}
\newcommand{\Rtt}{\R^{3\times 3}}
\newcommand{\St}{\mathbb{S}^2}		    	
\newcommand{\M}{\mathcal{M}}				
\newcommand{\T}[1][ ]{{\mathcal{T}_{#1}}}	
\newcommand{\bT}[1][ ]{{\boldsymbol{\mathcal{T}}_{#1}}}	
\newcommand{\boldT}{\boldsymbol{\mathcal{T}}}
\newcommand{\F}{\mathcal{F}}				
\newcommand{\Fhom}{\mathcal{F}_\hom}
\newcommand{\E}{\mathcal{E}}				
\newcommand{\K}{\mathcal{K}}				
\newcommand{\A}{\mathcal{A}}				
\newcommand{\Ahom}{\mathcal{A}_\hom}
\newcommand{\W}{\mathcal{W}}				
\newcommand{\Whom}{\mathcal{W}_\hom}
\newcommand{\Ze}{\mathcal{Z}}				
\newcommand{\Zehom}{\mathcal{Z}_\hom}
\newcommand{\G}{\mathcal{G}}				
\newcommand{\Ghom}{\mathcal{G}_\hom}
\newcommand{\hd}{{h_d}}
\newcommand{\Ms}{M_{\mathrm{sat}}}
\newcommand{\Ltpot}{{L^2_{\mathrm{pot}}}}
\newcommand{\Ltsol}{{L^2_{\mathrm{sol}}}}
\newcommand{\nablaomega}{{\nabla_\omega}}
\newcommand{\Cic}{{C^\infty_{\rm c}}}
\newcommand{\BL}{{{\rm BL}^1}}
\newcommand{\wsts}{{\xrightharpoonup{\text{stoch 2-s}}}}    
\newcommand{\sts}{{\xrightarrow{\text{stoch 2-s}}}}         
\newcommand{\BBB}{\color{black}}
\title[Stochastic homogenization of micromagnetic energies]{Stochastic homogenization of micromagnetic energies and emergence of magnetic skyrmions}
\author[E. Davoli]{Elisa Davoli}
\address{Institute of Analysis and Scientific Computing, TU Wien, Wiedner Hauptstraße 8-10, 1040 Vienna, Austria}
\email{elisa.davoli@tuwien.ac.at}
\author[L. D'Elia]{Lorenza D'Elia}
\address{Institute of Analysis and Scientific Computing, TU Wien, Wiedner Hauptstraße 8-10, 1040 Vienna, Austria}
\email{lorenza.delia@tuwien.ac.at}
\author[J. Ingmanns]{Jonas Ingmanns}
\address{Institute of Science and Technology Austria (ISTA), Am Campus 1, 3400 Klosterneuburg, Austria}
\email{jonas.ingmanns@ist.ac.at}
\date{}
\keywords{micromagnetics, stochastic homogenization, Dzyaloshinskii-Moriya interaction, chiral magnetic materials}
\subjclass[2020]{78M30, 78M40, 78A48, 35Q61}
\begin{document}

\maketitle

\begin{abstract}
We perform a stochastic-homogenization analysis for composite materials exhibiting a random microstructure.
Under the assumptions of stationarity and ergodicity, we characterize the Gamma-limit of a micromagnetic energy functional defined on magnetizations taking value in the unit sphere, and including both symmetric and antisymmetric exchange contributions. 
This Gamma-limit corresponds to a micromagnetic energy functional with homogeneous coefficients.  
    We provide explicit formulas for the effective magnetic properties of the composite material in terms of homogenization correctors.
    Additionally, the variational analysis of the two exchange energy terms is performed in the more general setting of functionals defined on manifold-valued maps with Sobolev regularity, in the case in which the target manifold is a bounded, orientable smooth surface with tubular neighborhood of uniform thickness.  
Eventually, we present an explicit characterization of minimizers of the effective exchange in the case of magnetic multilayers, providing quantitative evidence of Dzyaloshinskii's predictions on the emergence of helical structures in composite ferromagnetic materials with stochastic microstructure.
\end{abstract}\hspace{10pt}

\section{Introduction}
 Many key properties and applications of magnetic materials are strongly intertwined with the spatial distribution of magnetic moments inside the corresponding specimens. In addition to classical magnetic structures such as Weiss domains, where the magnetization is almost collinear and slowly-varying, and Bloch walls, forming thin transition layers and allowing the magnetization to rotate coherently from one magnetic structure to the other, magnetic skyrmions \cite{ferriani24, bogdanov8, fert2} have recently emerged as an extremely active field of research due to their potential as building blocks for innovative functional devices. Since skyrmions can be manipulated (written and deleted) individually on a magnetic stripe, in fact, these quasiparticles are regarded as possible carriers of information for future storage devices \cite{fert}. Named after the physicist T. Skyrme, skyrmions have been first predicted in magnetic crystals in \cite{djano}. They were then experimentally identified both in thin and ultrathin films as well as in multilayers.\\

The specific chirality of magnetic skyrmions, namely their lack of inversion symmetry \cite{fert2}, is determined by the antisymmetric Dzyaloshinskii-Moriya interaction (DMI, also called antisymmetric exchange)\cite{djano,fields28}. For an open bounded domain $D\subset \mathbb{R}^3$ representing a specimen of a single-crystal chiral magnet, assuming that $m$ satisfies a normalized saturation constraint
$|m(x)|=1$, the micro-magnetic energy is given by the functional (see \cite{brown})
\begin{align}
\F(m) &:= 
   	\frac{1}{2}\int_{D} a |\nabla m(x)|^2\dx 
   	+ \int_D \kappa \curl m(x)\cdot m(x)\dx
    - \frac{\mu_0}{2} \int_D h_d\left[m\chi_D\right](x)\cdot  m(x)\dx
    \notag\\&\quad    	
   	+\int_D \varphi\left(m(x)\right)\dx 
     -\mu_0 \int_D h_a(x)\cdot  m(x)\dx 
    \notag\\& =: 
    \E (m) + \K (m) + \W (m) + \A (m) + \Ze (m)\label{def:funcF-no-ep}
   \end{align}
In the expression above, $\E(m)$ and $\K(m)$ represent the exchange energy and the bulk DMI, encoding energetic contributions due to spatial changes of the magnetization and to asymmetries in the crystalline structure of the material, respectively. The parameters $a>0$ and $\kappa\in \mathbb{R}$ are material length-scales tuning the strength of very short-range interactions, and of chirality effects. 
The term $\W(m)$ in \eqref{def:funcF-no-ep} is the stray-field energy. It is a nonlocal energy contribution encoding the effects of the magnetic field $h_d[m\chi_D]$ induced by the magnetization $m$, and favoring configurations involving solenoidal magnetic fields. Note that $m\chi_{D}$ denotes an extension of $m$ to the whole $\Rt$ by setting it equal to zero outside of the set $D$. These two latter quantities are related through the magnetostatic Maxwell equation
$${\rm div}\,(-\mu_0 h_d[m\chi_D]+m\chi_D)=0\quad\text{in }\mathbb{R}^3,$$
where $\mu_0$ is the vacuum permeability. 
The contribution $\A(m)$ is the magnetocrystalline anisotropy, whose density $\varphi:\mathbb{S}^2\to [0,+\infty)$ encodes the presence of easy axes in the micromagnetic specimen, namely, of preferred magnetization directions (here $\mathbb{S}^2$ denotes, as usual, the unit sphere in $\R^3$). Eventually, $\Ze(m)$ is the Zeeman energy, modeling the tendency of the magnetization to align with the externally applied magnetic field $h_a$. A mathematical analysis of micromagnetics models incorporating DMI terms can be found in \cite{Li-Melcher35, Melcher38, Muratov42}. We also refer to the works \cite{cicalese13, cicalese14, cicalese15} for the discrete-to-continuum setting, as well as to \cite{ginster-zwicknagl23} for a study of associated energy scaling laws, and to \cite{dirk, davoli-difratta-praetorius-ruggeri} for related numerical and dimension-reduction results.
\\

The goal of this paper is to advance the mathematical modeling of magnetic skyrmions by analyzing the interplay of stochastic microstructures and chirality. In a recent work \cite{DDF20}, jointly with G. Di Fratta, one of the authors has  undertaken a periodic homogenization analysis for a micromagnetic energy functional involving a DMI bulk energy term. In particular, the results in \cite{DDF20} provided a first quantitative counterpart to the theoretical predictions in the seminal work by Dzyaloshinskii on the existence of helicoidal textures as the result of possible instabilities of ferromagnetic structures under small relativistic spin-lattice or spin-spin interactions \cite{djano, dzyalo20, dzyalo21, bak6}. \\

In the present manuscript we move radically beyond the even distribution of material heterogeneities assumed in \cite{DDF20} to identify effective theories for composite chiral magnetic materials with a microstructure encompassing random effects. The novelty of our contribution is threefold. First, we provide a stochastic homogenization analysis in the general setting of manifold-valued Sobolev spaces, having as a corollary the aforementioned application in micromagnetics. To the Authors' knowledge, this is the first mathematical study in this direction. Second, we delineate the framework for stochastic two-scale convergence in Beppo Levi spaces. Third, we present an explicit characterization of minimizers of the limiting exchange energies in the case of stochastic multilayers providing a further quantitative evidence, this time in an aperiodic setting, to Dzyaloshinskii predictions and to the experimental observations in \cite{yu52,fert2,chen12}, as well as a stochastic counterpart to \cite{DDF20}.\\ 

Before describing our findings in detail, we briefly review the mathematical literature on homogenization in micromagnetics.
Among the many contributions, we refer the Reader to the following papers and to the references therein: the setting of ferromagnetic laminates has been studied in \cite{haddar.joly}, whereas that of  perforated domains in \cite{santugini}. An analysis relying on the notion of $\mathcal{A}$-quasiconvexity has been carried out in \cite{pisante}. We refer to \cite{AdBMN21,ADF15} for two extensive homogenization results in the periodic and stochastic case. Periodic homogenization of chiral magnetic materials was tackled in the aforementioned \cite{DDF20}. \\


When considering a composite ferromagnetic body, it is important to keep track of the local interactions of grains with different magnetic properties at their interface \cite{acerbi-fonseca-mingione}. In particular, under the assumption of strong coupling conditions, meaning that the direction $m$ of the magnetization does not jump through an interface and only the magnitude is allowed to be discontinuous, homogenization problems are formulated by considering point- and microstructure-dependent material parameters, and for magnetizations $m$ taking value in the unit sphere. Assuming that $D$ is a specimen of a multi-crystal ferromagnet with random microstructure, we thus consider the family of energy functionals $(\F_\e)_{\e>0}$ with $\F_\e\colon H^1(D; \hspace{0.03cm} \St)\to \R$ given by
   \begin{align}
   	\F_\e(m) 
   	&:= 
   	\frac{1}{2}         \int_\domain a\left(\frac{x}{\e},\tomega\right) |\nabla m(x)|^2\dx 
   	+                   \int_\domain \kappa\left(\frac{x}{\e},\tomega\right) \curl m(x)\cdot m(x)\dx
   	\notag\\&\quad
        - \frac{\mu_0}{2}   \int_\domain h_d\left[\Ms\left(\frac{x}{\e},\tomega\right)m\chi_\domain\right](x)\cdot \Ms\left(\frac{x}{\e},\tomega\right) m(x)\dx
        \notag\\&\quad    	
   	+                   \int_\domain \vphi\left(\frac{x}{\e},m(x),\tomega\right)\dx 
        - \mu_0             \int_\domain h_a\cdot \Ms\left(\frac{x}{\e},\tomega\right) m(x)\dx 
        \notag\\& 
        =                   \E_\e (m) + \K_\e (m) + \W_\e (m) + \A_\e (m) + \Ze_\e (m)\label{def:funcF}
   \end{align}
for a set of stationary and ergodic random parameters $a,\kappa, \Ms\colon\Rt\rightarrow \R$ and  $\vphi\colon\Rt\rightarrow C^{0,1}(\St)$ on an appropriate probability space $(\Omega,\Sigma,\P)$ with $\tomega\in\Omega$.
The specific stochastic framework and  assumptions on the parameters will be described in Section \ref{subs:assumptions}. We only briefly mention here that requiring stationarity amounts, roughly speaking, to imposing a sort of stochastic periodicity (or periodicity in law, cf \cite{dalmaso.modica}) guaranteeing that any given microstructure and all its translations occur with equal probability. Ergodicity, instead, enforces that the behavior of the effective material is independent from the specific stochastic realization, see \cite{dalmaso.modica2}.

Note that the DMI term can be rewritten as 
\begin{equation}\label{eq:intro:DMI}
    \K_\e (m)
    =
    \int_\domain \kappa\left(\frac{x}{\e},\tomega\right) \curl m(x)\cdot m(x)\dx
    =
    \int_\domain \nabla m(x):\kappa\left(\frac{x}{\e},\tomega\right)\bchi(m(x))\dx
\end{equation}
where $A:B=\sum_{i,j}A_{ij}B_{ij}$ for $A,B\in\Rtt$ and 
\begin{equation}\label{eq:intro:def:bchi}
   	\bchi\colon s\in\Rt\mapsto (e_1\times s, e_2\times s, e_3\times s)^\intercal\in \R^{3\times 3}_{\rm skew}.
\end{equation}

Let $\Id$ denote the identity matrix in $\R^{3\times 3}$, and for a random variable $f$, let $\Ev[f]:=\int_\Omega f(\omega)\dP$ be its expected value. We present below a slightly simplified statement of our first main result. We refer to Theorem \ref{mainthm} for its precise formulation.

\begin{thm}\label{mainthm-intro}
Let $D\subset \R^3$ be an open, bounded set with Lipschitz boundary. 
Then, under the assumption of stochasticity and ergodicity of all material parameters, for almost every $\tomega\in\Omega$ the family $(\F_\e)_{\e>0}$ is equi-coercive and $\Gamma$-converges with respect to the weak topology on $H^1(D;\hspace{0.03cm}\St)$ to the energy functional $\Fhom$, which corresponds to the micromagnetic energy of a homogeneous material with a-priori anisotropic exchange parameter and DMI constant, 
\begin{align}\notag
    \Fhom
    &=
    \E_\eff+\K_\eff+\W_\eff+\A_\eff+\Ze_\eff,
\end{align}
where, in the sense of Remark \ref{rk:correctors}, denoting by $\nabla\Phi_a,\nabla\Phi_\kappa,\nabla\Phi_M$ the solutions in $L^2(\Omega,L^2_{\rm loc}(\Rt))$ of the corrector-type equations
\begin{equation}\label{eq:mt:correctors-intro}
    \divv(a\nabla\Phi_a + a\Id)=0,
    \qquad
    \divv(a\nabla\Phi_\kappa + \kappa\Id)=0,
    \qquad
    \divv(\nabla\Phi_M + \Ms\Id)=0,
\end{equation}
there holds
\begin{align*}
    \E_\eff  &= \frac{1}{2}\int_\domain \nabla m:\Ev\left[a\Id-(\nabla\Phi_a)^\intercal a\nabla\Phi_a\right]\nabla m\dx,\\
    \K_\eff  &= -\int_\domain \nabla m :\Ev\left[\kappa\Id-(\nabla\Phi_a)^\intercal a\nabla\Phi_\kappa\right]\bchi(m)\dx,\\
    \W_\eff  &= -\frac{\mu_0}{2} \int_\domain h_d\left[\Ev\left[\Ms\right]m\chi_\domain\right]\cdot \Ev\left[\Ms\right] m\dx,\\
    \A_\eff  &= \int_\domain\left( \Ev\left[\vphi\right](m)
                                    - \frac{1}{2}\bchi(m):\Ev\left[(\nabla\Phi_\kappa)^\intercal a\nabla\Phi_\kappa\right]\bchi(m)
                                    + \frac{\mu_0}{2}m\cdot\Ev\left[(\nabla\Phi_M)^\intercal\nabla\Phi_M\right]m
                \right)\dx\\
    \Ze_\eff &= -\mu_0\int_\domain h_a\cdot \Ev\left[\Ms\right]m \dx.
\end{align*}
\end{thm}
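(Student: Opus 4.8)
The plan is to prove equi-coercivity together with the $\Gamma$-$\liminf$ and $\Gamma$-$\limsup$ inequalities for the weak $H^1(D;\St)$-topology (the full statement being Theorem \ref{mainthm}), using stochastic two-scale convergence --- including its Beppo Levi version for the nonlocal term --- and the Birkhoff ergodic theorem. Equi-coercivity is soft: by the assumed uniform ellipticity of $a$ of Section \ref{subs:assumptions}, $\E_\e(m)\ge\tfrac{\alpha}{2}\|\nabla m\|_{L^2(D)}^2$; by \eqref{eq:intro:DMI} and $|\bchi(m)|\le C|m|=C$ one has $|\K_\e(m)|\le C\|\nabla m\|_{L^2(D)}$; and $\W_\e,\A_\e,\Ze_\e$ are bounded on $H^1(D;\St)$ because $|m|\equiv 1$. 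Hence $\F_\e(m)\ge\tfrac{\alpha}{4}\|\nabla m\|_{L^2(D)}^2-C$, so bounded-energy sequences are bounded in $H^1$, and the constraint $m(x)\in\St$ is preserved under weak limits by Rellich compactness. The substance of the proof is the homogenization of the gradient part $\G_\e:=\E_\e+\K_\e$; the stray field $\W_\e$ and the lower-order terms $\A_\e,\Ze_\e$ are handled afterwards.

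\textbf{Lower bound.} Let $m_\e\rightharpoonup m$ in $H^1(D;\St)$ with $\sup_\e\F_\e(m_\e)<\infty$. Up to a subsequence, $m_\e\to m$ in $L^2(D)$ and a.e., and, by stochastic two-scale $H^1$-compactness, $\nabla m_\e\wsts\nabla m(x)+\chi(x,\omega)$ with $\chi\in L^2(D;\Ltpot(\Omega)^3)$; differentiating $|m_\e|^2\equiv 1$ and passing to the two-scale limit gives $m(x)^\intercal\chi(x,\omega)=0$, so $\chi$ is tangent to $\St$ along $m$. The integrand $g(\omega,s,A):=\tfrac12 a(\omega)|A|^2+A:\kappa(\omega)\bchi(s)$ is convex in $A$ and $\bchi(m_\e)\to\bchi(m)$ strongly in $L^2$, so lower semicontinuity under stochastic two-scale convergence yields $\liminf_\e\G_\e(m_\e)\ge\int_D\Ev[g(\omega,m(x),\nabla m(x)+\chi(x,\omega))]\dx$. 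Since $\chi(x,\cdot)$ is an admissible (tangent, potential) corrector, the right-hand side is bounded below by $\int_D g_\hom(m,\nabla m)\dx$, where $g_\hom(s,A):=\min_{\psi}\Ev[g(\omega,s,A+\psi)]$; the cell Euler--Lagrange equation states that $a(\nabla m+\chi)+\kappa\bchi(m)$ is solenoidal, which by linearity is solved by $\chi=\nabla\Phi_a[\nabla m]+\nabla\Phi_\kappa[\bchi(m)]$ with $\Phi_a,\Phi_\kappa$ as in \eqref{eq:mt:correctors-intro}. Substituting and repeatedly invoking the $L^2(\Omega)$-orthogonality of solenoidal and potential fields, one computes $g_\hom(s,A)=\tfrac12 A:\Ev[a\Id-(\nabla\Phi_a)^\intercal a\nabla\Phi_a]A-A:\Ev[\kappa\Id-(\nabla\Phi_a)^\intercal a\nabla\Phi_\kappa]\bchi(s)-\tfrac12\bchi(s):\Ev[(\nabla\Phi_\kappa)^\intercal a\nabla\Phi_\kappa]\bchi(s)$, i.e. the densities of $\E_\eff$, $\K_\eff$, and of a ``$\bchi$-correction'' local in $m$. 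For the stray field, $\Ms(\cdot/\e,\tomega)m_\e\wsts\Ms(\omega)m(x)$, but $\hd$ does \emph{not} commute with the two-scale limit: the two-scale limit of $\hd[\Ms(\cdot/\e,\tomega)m_\e\chi_D]$ equals $\hd[\Ev[\Ms]m\chi_D](x)$ plus an oscillating corrector governed by the third equation in \eqref{eq:mt:correctors-intro}, and inserting this into $\W_\e$ and discarding solenoidal--potential cross terms produces $\W_\eff(m)$ plus $\tfrac{\mu_0}{2}\int_D m\cdot\Ev[(\nabla\Phi_M)^\intercal\nabla\Phi_M]m\dx$. Finally $\A_\e(m_\e)\to\int_D\Ev[\vphi](m)\dx$ and $\Ze_\e(m_\e)\to\Ze_\eff(m)$ follow from $m_\e\to m$ in $L^2$, the Lipschitz dependence of $\vphi$ on its second argument, and the ergodic theorem applied to the stationary fields $\vphi(\cdot/\e,s,\tomega)$ and $\Ms(\cdot/\e,\tomega)$. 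Summing the five contributions and absorbing the two correction terms into the anisotropy gives $\liminf_\e\F_\e(m_\e)\ge\Fhom(m)$.

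\textbf{Upper bound.} Fix $m\in H^1(D;\St)$. Approximating $\nabla m$ in $L^2(D)$ by piecewise-constant tangent fields $\bar A$, set $m_\e:=\pi_{\St}(m+\e\, w_\e)$, where $\pi_{\St}$ is the nearest-point projection --- well defined and Lipschitz on a fixed tubular neighborhood of $\St$ by the uniform-thickness hypothesis --- and $w_\e(x)$ realizes the optimal corrector $\Phi_a[\bar A(x)]+\Phi_\kappa[\bchi(m(x))]$ evaluated at the translated point $T_{x/\e}\tomega$; since only the stochastic gradients of the correctors are stationary $L^2$ fields, one works with the zeroth-order regularized correctors $\Phi^\delta_\bullet$ and diagonalizes $\e\to 0$, $\delta\to 0$, refining the partition last. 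Along this diagonal $\e w_\e\to 0$ in $L^\infty(D)$ (after a standard truncation of the corrector fields), so $m_\e\in H^1(D;\St)$, $m_\e\rightharpoonup m$, and $\nabla m_\e=\nabla m+\chi^*+o(1)$ in $L^2(D)$, with $\chi^*$ the optimal corrector evaluated at the fast scale --- the tangency of $\chi^*$ ensuring that $d\pi_{\St}|_m$ acts as the identity to leading order. The ergodic theorem, together with the continuity of $g_\hom$ in its second argument, then gives $\G_\e(m_\e)\to\int_D g_\hom(m,\nabla m)\dx$, while $\W_\e(m_\e),\A_\e(m_\e),\Ze_\e(m_\e)$ converge to the corresponding effective terms by the two-scale/ergodic arguments above; hence $\limsup_\e\F_\e(m_\e)\le\Fhom(m)$.

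\textbf{Main obstacle.} The crux --- and the motivation for the general manifold-valued formulation --- is the interaction between the non-convex constraint $m(x)\in\St$ and the homogenization procedure. In the lower bound, one must check that the pointwise cell problem, a priori restricted to \emph{tangent} correctors, has the same value as the unconstrained one, so that the formulas in terms of the full matrices $\nabla\Phi_a,\nabla\Phi_\kappa,\nabla\Phi_M$ of \eqref{eq:mt:correctors-intro} (in the sense of Remark \ref{rk:correctors}) are legitimate; this rests on the fact that $A:\kappa\bchi(s)$ sees only the component of $A$ tangent to $s$, since the rows of $\bchi(s)$ in \eqref{eq:intro:def:bchi} are orthogonal to $s$, and more generally on the variational analysis of $\E_\e$ and $\K_\e$ on Sobolev maps into a surface with uniform tubular neighborhood. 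In the recovery sequence, one must produce competitors lying \emph{exactly} on $\St$ that still carry the prescribed two-scale oscillation: since the corrector fields are only $L^2$, they must first be truncated so that $\e w_\e$ is uniformly small, and it is precisely the uniform tubular neighborhood that then makes $m_\e=\pi_{\St}(m+\e w_\e)$ well defined with controllable gradient. Finally, the nonlocal stray-field term requires setting up stochastic two-scale convergence in Beppo Levi spaces on all of $\Rt$. I expect the manifold-valued two-scale analysis and this Beppo Levi framework --- rather than the otherwise classical homogenization algebra --- to be the main technical hurdles.
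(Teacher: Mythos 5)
Your proposal is correct and follows essentially the same route as the paper: equi-coercivity from the uniform ellipticity of $a$ plus an absorption of the DMI term, a pointwise tangent cell problem solved by the correctors $\nabla\Phi_a,\nabla\Phi_\kappa$ for the $\Gamma$-limit of $\E_\e+\K_\e$, nearest-point-projection recovery sequences with regularized correctors and a diagonal argument, stochastic two-scale convergence in Beppo Levi spaces for the stray field (with the $\nabla\Phi_M$ correction absorbed into the anisotropy), and Lipschitz/ergodic arguments for the anisotropy and Zeeman terms. The only (harmless) structural differences are that the paper isolates $\E_\e+\K_\e$ as a standalone manifold-valued $\Gamma$-convergence theorem and treats $\W_\e,\A_\e,\Ze_\e$ as continuously convergent perturbations via the stability of $\Gamma$-limits under such perturbations, and that it upgrades your pointwise orthogonality $m\cdot\chi=0$ to the stronger membership $\chi(x,\cdot)\in\Ltpot(\Omega;\T[m(x)]\St)$ needed to make $\chi$ admissible in the tangent cell problem (a point your unconstrained-versus-constrained discussion for $\St$ already covers).
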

We stress that the effective material identified via our homogenization procedure reduces, in the periodic case, to the one found in \cite{DDF20}. In this latter setting, the corrector equations are replaced by  corresponding cell problems and the expected values of the limiting random variables by the averages of the corresponding periodic quantities in their periodicity cell.
We also point out that the different energy contributions denoted with the pedix ``eff" in Theorem \ref{mainthm-intro} are not the $\Gamma$-limit of the corresponding $\e$-energies (which we will denote by ``hom" instead) but have rather been rearranged in order to highlight the role of correctors in the limiting problem, as well as the meaning of the various energy terms.

 The proof of Theorem \ref{mainthm} is based on the notion of quenched stochastic two-scale convergence. Whereas the notion of two-scale convergence in the periodic case \cite{lukkassen.nguetseng.wall,allaire, nguetseng} is uniquely defined, in the stochastic setting two alternative notions have been introduced, corresponding to two different choices for the topology in which the limiting description is identified: when working with stochastic two-scale convergence in the mean \cite{papanikolau.varadhan, andrews.wright, bourgeat.luckhaus.mikelic}, the relevant fields are integrated with respect to the probability space (see also \cite{neukamm.varga,heida.neukamm.varga} for a corresponding notion of stochastic unfolding), whereas quenched stochastic limits \cite{AdBMN21, ZP06} are taken pointwise for almost every realization. We refer to \cite{heida.neukamm.varga} for a comparison between the two convergences. We will argue here with the quenched variant, henceforth simply referred to, for shortness, as stochastic two-scale convergence.

The key ingredient for showing Theorem \ref{mainthm-intro} consists in proving the $\Gamma$-convergence of $\G_\e\coloneqq\E_\e+\K_\e$, for the remaining energy contributions can be treated as continuous perturbations. Studying the asymptotic behavior of $\G_\e$ is of independent mathematical interest, because it represents a first stochastic homogenization result for Dirichlet-type energies in manifold-valued Sobolev spaces. We therefore prove a slightly more general result for functionals defined on maps taking value in a general bounded, orientable, $C^2$ hypersurface of $\Rt$ admitting a tubular neighborhood of uniform thickness, cf. Theorem \ref{thm:GeGammaConv}. We refer to \cite{ADF15, DDF20, dacorogna.fonseca, babadjian.millot} for periodic homogenization problems in manifold-valued Sobolev spaces, as well as to \cite{berlyand.sandier.serfaty} and the references therein for a overview on stochastic homogenization in classical Sobolev spaces for convex and non-convex integral functionals.

The $\Gamma$-convergence of the magnetostatic self-energy is characterized in Proposition \ref{prop:intro:Whom}, whereas anisotropic energy and Zeeman contributions are studied in Proposition \ref{prop:intro:AhomZehom}. In particular, as a by-product of our analysis, we provide an extension of the theory of two-scale convergence in Beppo-Levi spaces in \cite{ADF15} to the stochastic setting, cf. Subsection \ref{section:demagnefield}. 

Eventually, we specify our analysis to the case in which our micromagnetic specimen is a multilayer with random microstructure. In this latter framework, we provide an explicit characterization of the minimizers of $\G_{\rm hom}\coloneqq{\Gamma\text{-}\lim}_{\e\to 0} (\E_\e+\K_e)$  and show the emergence of chiral structures.
A simplified statement of our second main result reads as follows. We refer to Proposition \ref{prop:lami:Ghom} and Lemma \ref{lem:lami:minimizers} for its precise formulation.

\begin{thm}
    \label{mainthm2-intro}
    If $\Ev[\kappa]=0$, the only minimizers of $\Ghom$  are given by
     \begin{equation*}
         \notag
         m_\ast(x):=\cos(\theta(x\cdot e_3))e_1 + \sin(\theta(x\cdot e_3))e_2, \quad \theta(t):= \theta_0+\Ev\left[\frac{\kappa}{a}\right]t,
     \end{equation*}
for every $t\in\R$, with $\theta_0\in\R$ arbitrary.
\end{thm}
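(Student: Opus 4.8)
The plan is to exploit the fact that in the multilayer case the homogenized energy $\Ghom$ is entirely explicit, and then to read off its minimizers by completing a square.

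First I would invoke Proposition~\ref{prop:lami:Ghom}, which evaluates the corrector formulas of Theorem~\ref{mainthm-intro} in the layered geometry. Since $a$ and $\kappa$ depend only on the stacking variable $x\cdot e_3$, the corrector equations \eqref{eq:mt:correctors-intro} can be integrated explicitly: the correctors vanish in the two in-layer directions $e_1,e_2$ and are one-dimensional primitives in the $e_3$-direction. Consequently the effective exchange tensor equals the arithmetic mean $\Ev[a]$ along $e_1,e_2$ and the harmonic mean $\bar a:=(\Ev[1/a])^{-1}$ across the layers, the effective DMI tensor has in-layer coefficient $\Ev[\kappa]$ and transversal coefficient $\bar a\,\Ev[\kappa/a]$, and the Dzyaloshinskii--Moriya-induced anisotropy correction reduces to $\tfrac12\big(\Ev[\kappa^2/a]-\bar a\,\Ev[\kappa/a]^2\big)(1-m_3^2)$, with nonnegative prefactor by Cauchy--Schwarz. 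When $\Ev[\kappa]=0$ the whole in-layer DMI disappears, so that, writing $m=(m_1,m_2,m_3)$ and $\lambda:=\Ev[\kappa/a]$ (with the sign conventions of Proposition~\ref{prop:lami:Ghom}),
\begin{equation*}
\Ghom(m)=\frac12\int_D\Big(\Ev[a]\big(|\partial_1 m|^2+|\partial_2 m|^2\big)+\bar a\,|\partial_3 m|^2\Big)\dx-\bar a\,\lambda\int_D\partial_3 m\cdot(e_3\times m)\dx-\frac12\big(\Ev[\kappa^2/a]-\bar a\,\lambda^2\big)\int_D\big(1-m_3^2\big)\dx .
\end{equation*}

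Next I would complete the square in the transversal variable. Using $|e_3\times m|^2=1-m_3^2$ and $|\partial_3 m-\lambda\,e_3\times m|^2=|\partial_3 m|^2-2\lambda\,\partial_3 m\cdot(e_3\times m)+\lambda^2(1-m_3^2)$, the transversal exchange and the DMI combine into $\tfrac{\bar a}{2}|\partial_3 m-\lambda\,e_3\times m|^2-\tfrac{\bar a\lambda^2}{2}(1-m_3^2)$; absorbing this leftover term into the anisotropy correction and using the cancellation $\bar a\lambda^2+(\Ev[\kappa^2/a]-\bar a\lambda^2)=\Ev[\kappa^2/a]$ recasts the energy as
\begin{equation*}
\Ghom(m)=-\frac{\Ev[\kappa^2/a]}{2}\,|D|+\frac{\Ev[a]}{2}\int_D\big(|\partial_1 m|^2+|\partial_2 m|^2\big)\dx+\frac{\bar a}{2}\int_D\big|\partial_3 m-\lambda\,e_3\times m\big|^2\dx+\frac{\Ev[\kappa^2/a]}{2}\int_D m_3^2\,\dx .
\end{equation*}
The three integral terms are nonnegative, so $\Ghom(m)\ge-\tfrac12\Ev[\kappa^2/a]\,|D|$ for every $m\in H^1(D;\St)$, and $m_\ast$ attains this value since $\partial_1 m_\ast=\partial_2 m_\ast=0$, $m_{\ast,3}\equiv0$, and $\partial_3 m_\ast=\theta'\,e_3\times m_\ast=\lambda\,e_3\times m_\ast$.

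Finally I would establish uniqueness. In the presence of genuine chirality, $\kappa\not\equiv0$, one has $\Ev[\kappa^2/a]>0$, so any minimizer must annihilate each of the three nonnegative terms, i.e. $\partial_1 m=\partial_2 m\equiv0$, $m_3\equiv0$ and $\partial_3 m=\lambda\,e_3\times m$. Writing $m=\cos\phi\,e_1+\sin\phi\,e_2$ (legitimate since $m_3\equiv0$), these conditions become $\partial_1\phi=\partial_2\phi=0$ and $\partial_3\phi=\lambda$, which integrate on each connected component of $D$ to $\phi(x)=\theta_0+\lambda\,(x\cdot e_3)$, i.e. $m=m_\ast$; there is no topological obstruction to the lifting $\phi$ because the transversal constraint $\partial_3 m=\lambda\,e_3\times m$ is a linear ODE with single-valued rotation solutions. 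The step I expect to carry the real weight is the first one: producing the explicit multilayer effective coefficients, and especially pinning down the transversal anisotropy prefactor to be exactly $\Ev[\kappa^2/a]-\bar a\,\Ev[\kappa/a]^2$, for this is what makes the constant in the square decomposition collapse precisely to $-\tfrac12\Ev[\kappa^2/a]\,|D|$ (Proposition~\ref{prop:lami:Ghom}). Once that identity is in place, the completion of squares and the rigidity are routine, the only point of mild care being to ensure that $\partial_1 m=\partial_2 m\equiv0$ together with the transversal ODE really confines $m$ to a function of $x\cdot e_3$ on each component of $D$.
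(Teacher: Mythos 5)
Your proof is correct and follows essentially the same route as the paper: the explicit laminate coefficients you derive are exactly those of Proposition \ref{prop:lami:Ghom}, and your completion of the square in the transversal variable, the lower bound $\Ghom(m)\geq -\tfrac12\Ev[\kappa^2/a]\,|D|$, and the rigidity argument are precisely the content of the [DDF20, Theorem 3.1] argument that the paper invokes in Lemma \ref{lem:lami:minimizers} rather than reproducing. Two cosmetic points: your prose states the anisotropy correction with the wrong sign ($+\tfrac12(\dots)(1-m_3^2)$) before writing it correctly with a minus sign in the display, and your $\lambda=\Ev[\kappa/a]$ clashes with the paper's use of $\lambda$ for the slab thickness (so that the paper's minimum value $-\tfrac{\lambda}{2}\Ev[\kappa^2/a]$ is your $-\tfrac12\Ev[\kappa^2/a]\,|D|$, since $|D'|=1$). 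You are also right to flag that uniqueness requires $\Ev[\kappa^2/a]>0$ (i.e.\ $\kappa\not\equiv 0$) to force $m_3\equiv 0$, and that $D'$ must be connected for $\theta_0$ to be a single constant; both hypotheses are implicit in the paper's statement.
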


The paper is organized as follows. 
In Section \ref{sec:Setting2scale} we describe the stochastic framework and specify the assumptions on the random parameters as well as the anisotropy energy density. We conclude this section providing the precise statements of our main results. 
In Section \ref{section:sto2scalecon} we recall the concept of stochastic two-scale convergence and show the existence of the corrector quantities given in \eqref{eq:mt:correctors}.
Section \ref{section: twoscale lim} is devoted to the characterization of the two-scale limits of $H^1(D;\hspace{0.03cm}\M)$-maps. 
In Section \ref{section:GammaresultExDMI}, we obtain the $\Gamma$-convergence of $\G_\e$ (Theorem \ref{thm:GeGammaConv}).
In particular, the limiting density is characterized in Subsection \ref{subsect:homDensity}, whereas the  liminf and limsup inequalities are proven in Subsections \ref{subsect:liminfine} and \ref{subsect:limsupine}, respectively. 
Section \ref{section:ProofMainResult} contains the rest of the proof of Theorem \ref{mainthm-intro} (for the precise assumptions, Theorem \ref{mainthm}): 
adapting the strategy in \cite{ADF15} to the stochastic setting, we prove the convergence of $\W_\e$ (Proposition \ref{prop:intro:Whom}) in Subsection \ref{section:demagnefield} and the convergence of $\A_\e$ and $\Ze_\e$ (Proposition \ref{prop:intro:AhomZehom}) in Subsection \ref{section:Aniso+Zeeman}.
The equi-coerciveness of the micromagnetic functionals is shown in Subsection \ref{section:equicoerc}.
Finally, Section \ref{section:multilayers} is devoted to obtain Theorem \ref{mainthm2-intro}: we calculate the effective material properties if the microstructure is given by laminates and thus obtain a characterization of the minimizers.

\section{Setting of the problem and main results}
\label{sec:Setting2scale}
In this section we collect our main assumptions and state our main results. For simplicity,  we will formulate all results in $\R^3$. We stress, however, that they hold more generally in $\R^d$.

\paragraph{\textbf{Notation}}
In what follows, we denote by $\Id$ the identity matrix in $\Rtt$. For  
$A\in \R^{3\times 3}$, we indicate by $A^\intercal$ its transpose.  Given a random variable $f$, the expected value of $f$ is denoted by $\Ev[f]$. We will use standard notation for Lebesgue and Sobolev spaces. We will sometimes omit the target space whenever this is immediately clear from the context. We will often denote by $C>0$ a generic constant, whose value in a formula might change from line to line. Diagonal matrices in $\Rtt$ with entries $d_1,d_2,d_3$ will be denoted by $\operatorname{diag}(d_1,d_2,d_3)$.

\subsection{Stochastic framework and parameter assumptions}
\label{subs:assumptions}
Let $(\Omega, \Sigma, \P)$ be a probability space endowed with a {\it dynamical system} $(T_x)_{x\in\Rt}$, i.e., a family of measurable bijective mappings 
$T_x:\Omega\to\Omega$ satisfying the following proprieties
\begin{enumerate}[label=(T\arabic*)]
    \item\label{asT_SumGroupAction}
        $T_0={\rm Id}$ and $T_{x+y} = T_x\circ T_y$, for every $x,y \in\Rt$;
    \item\label{asT_PInvariance}
        $\mathbb{P}(T_x^{-1}(B))=\mathbb{P}(B)$ for all $x\in\Rt$ and $B\in\Sigma$;
    \item\label{asT_measurability} 
        the map $(x, \omega)\in\R^3\times\Omega\mapsto
        T_x\omega\in\Omega$ is measurable.
\end{enumerate} 
We also assume that $\Omega$ is a compact metric space, $\Sigma$ is a complete $\sigma$-algebra on $\Omega$, and $(x,\omega)\mapsto T_x\omega$ as a map from $\Rt\times\Omega$ to $\Omega$ is continuous. We will work under the assumption that the dynamical system $T_x$ is {\it ergodic}, namely:
\begin{enumerate}[resume*]
    \item\label{asT_Ergodicity}
         The only measurable sets $A\in\Sigma$ which are translation invariant, i.e.,  are such that (up to a null subset) $T_x A=A$ for every $x\in\R^d$, satisfy $\mathbb{P}(A)\in\{0,1\}$. 
\end{enumerate}
\begin{remark}
    \rm 
    The ergodicity property can be equivalently formulated by requiring that the only measurable functions $f:\Omega\to \R$ such that $f(\omega) = f(T_x\omega)$ for every $x\in\Rt$ and $\P$-a.e. $\omega\in\Omega$ are those coinciding $\P$-almost surely with a constant.
\end{remark}

\begin{remark}
    \rm
    A range of settings and examples, where the above assumptions on the probability space and the dynamical system are satisfied, can be found in \cite[Section 1.1]{ZP06}.
\end{remark}

    For $1\leq p\leq\infty$ we denote by $L^p(\Omega)$ the $L^p$-space on $(\Omega, \Sigma, \P)$, and, similarly, we indicate by $L^p(\Rt\times\Omega)$ the $L^p$-space on the product measure space.

A random field $(x,\omega)\in\Rt\times\Omega\mapsto\tilde{f}(x, \omega)$  is said to be {\it stationary} if there exists a measurable function $f$ on $\Omega$ such that $\tilde{f}(x, \omega) = f(T_x\omega)$. 
Hence, a random variable $f: \Rt\times\Omega\to\R$ is {\it stationary ergodic} if $f$ is stationary and the underlying dynamical system $T_x$ is ergodic. \\
We assume that all the material-dependant parameters and the anisotropy energy density are stationary ergodic random variables. With a slight abuse of notation, we will use the same letter for each of these random fields as well as for their corresponding measurable functions.
To be precise,
\begin{enumerate}[label=(P\arabic*)]
   	\item\label{asPara_Ex}
   	    the exchange coefficient is given by $a(x,\omega) = a(T_{x}\omega)$ with $a\in L^\infty(\Omega)$ bounded from below and above by two positive constants $c_{\rm ex}, C_{\rm ex}$, i.e.,  $0<c_{\rm ex}\leq a(\omega)\leq C_{\rm ex}$ for all $\omega\in\Omega$;
   	\item\label{asPara_DMI}
   	    the DMI constant is given by $\kappa(x,\omega) = \kappa(T_{x}\omega)$,  for some $\kappa\in L^\infty(\Omega)$ bounded by a positive  constant $C_{\rm DMI}$, i.e. $|\kappa(\omega)|\leq C_{\rm DMI}$ for all $\omega\in\Omega$;
   	\item\label{asPara_Msat}
   	    the local saturation magnetization is given by $\Ms(x,\omega) = \Ms(T_{x}\omega)$ with $\Ms\in L^\infty(\Omega)$ positive and bounded from above, i.e.,  $0\leq\Ms(\omega)\leq C_{\rm sat}$ for some $C_{\rm sat}>0$ and all $\omega\in\Omega$;
   	\item\label{asPara_aniso}
   	    the anisotropy energy density $\varphi\colon\R^3\times\Omega\times \St\to \R^{+}$ is given by $\vphi\left(x,s,\omega\right)=\vphi(T_{x}\omega,s)$, where $\vphi(\cdot,s): \Omega\to\R^{+}\in L^\infty(\Omega)$ for every $s\in\St$, and there exists a positive constant $L_{\rm an}>0$ such that 
   	    \begin{equation}
   	    \notag
   	    \underset{\omega\in\Omega}{\mbox{ess-sup}}\hspace{0.03cm} |\varphi(\omega, s_1)-\varphi(\omega, s_2)| \leq L_{\rm an} |s_1-s_2|, \quad \forall\hspace{0.1cm} s_1, s_2\in\St.
   	    \end{equation}
\end{enumerate}

\subsection{Statement of the main results}
With the notation introduced in Subsection \ref{subs:assumptions}, we are now in a position to provide the precise statement of our main result. Recall \eqref{eq:intro:def:bchi} and \eqref{def:funcF}.
\begin{thm}\label{mainthm}
Let $D$ be an open, bounded domain with  Lipschitz boundary. Assume that, with respect to a probability space $(\Omega,\Sigma,\P)$ as described in Section \ref{subs:assumptions}, the random parameters $a_\e,\kappa_\e,M_\e$ and the random anisotropy density $\vphi_\e$ satisfy \ref{asPara_Ex}--\ref{asPara_aniso}.\\
Then, for almost every $\tomega\in\Omega$ the family $(\F_\e)_{\e>0}$ is equi-coercive and $\Gamma$-converges with respect to the weak topology on $H^1(D;\hspace{0.03cm}\St)$ to the energy functional $\Fhom$, which corresponds to the micromagnetic energy of a homogeneous material with a-priori anisotropic exchange parameter and DMI constant, 
\begin{align}\notag
    \Fhom
    &=
    \E_\eff+\K_\eff+\W_\eff+\A_\eff+\Ze_\eff,
\end{align}
where, in the sense of Remark \ref{rk:correctors}, denoting by $\nabla\Phi_a,\nabla\Phi_\kappa,\nabla\Phi_M$ the solutions in $L^2(\Omega,L^2_{\rm loc}(\Rt))$ of the corrector-type equations
\begin{equation}\label{eq:mt:correctors}
    \divv(a\nabla\Phi_a + a\Id)=0,
    \qquad
    \divv(a\nabla\Phi_\kappa + \kappa\Id)=0,
    \qquad
    \divv(\nabla\Phi_M + \Ms\Id)=0,
\end{equation}
the effective energy components are given by
\begin{align*}
    \E_\eff  &= \frac{1}{2}\int_\domain \nabla m:\Ev\left[a\Id-(\nabla\Phi_a)^\intercal a\nabla\Phi_a\right]\nabla m\dx,\\
    \K_\eff  &= -\int_\domain \nabla m :\Ev\left[\kappa\Id-(\nabla\Phi_a)^\intercal a\nabla\Phi_\kappa\right]\bchi(m)\dx,\\
    \W_\eff  &= -\frac{\mu_0}{2} \int_\domain h_d\left[\Ev\left[\Ms\right]m\right]\cdot \Ev\left[\Ms\right] m\dx,\\
    \A_\eff  &= \int_\domain\left( \Ev\left[\vphi\right](m)
                                    - \frac{1}{2}\bchi(m):\Ev\left[(\nabla\Phi_\kappa)^\intercal a\nabla\Phi_\kappa\right]\bchi(m)
                                    + \frac{\mu_0}{2}m\cdot\Ev\left[(\nabla\Phi_M)^\intercal\nabla\Phi_M\right]m
                \right)\dx\\
    \Ze_\eff &= -\mu_0\int_\domain h_a\cdot \Ev\left[\Ms\right]m \dx.
\end{align*}
\end{thm}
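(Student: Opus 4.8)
\emph{Strategy and reduction.} The plan is to decompose $\F_\e=\G_\e+(\W_\e+\A_\e+\Ze_\e)$ with $\G_\e=\E_\e+\K_\e$, the only group carrying genuine homogenization. By Theorem \ref{thm:GeGammaConv}, for a.e.\ $\tomega$ the family $\G_\e$ is equi-coercive and $\Gamma$-converges, in the weak $H^1(D;\St)$-topology, to some $\G_\hom$; by Propositions \ref{prop:intro:Whom} and \ref{prop:intro:AhomZehom} the remaining three terms converge \emph{continuously}, i.e.\ whenever $m_\e\rightharpoonup m$ in $H^1(D;\St)$,
\begin{equation*}
    (\W_\e+\A_\e+\Ze_\e)(m_\e)\ \longrightarrow\ (\W_\hom+\A_\hom+\Ze_\hom)(m).
\end{equation*}
Since the weak topology is metrizable on bounded subsets of $H^1(D;\St)$, the sequential characterization of $\Gamma$-convergence applies, and the standard stability of $\Gamma$-convergence under continuously convergent perturbations yields $\Gamma\text{-}\lim_{\e\to0}\F_\e=\G_\hom+\W_\hom+\A_\hom+\Ze_\hom$ on the full-measure set of realizations where all ingredients hold. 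Equi-coercivity of $(\F_\e)_\e$ (Subsection \ref{section:equicoerc}) is then immediate from \ref{asPara_Ex}, since $\F_\e(m)\ge\tfrac{c_{\rm ex}}{2}\|\nabla m\|_{L^2(D)}^2-C$ uniformly in $\e,\tomega$, the lower-order contributions being controlled by $|m|\equiv1$ and the bounds in \ref{asPara_DMI}--\ref{asPara_aniso}. Finally, the five ``$\mathrm{eff}$''-terms in the statement are obtained from $\G_\hom+\W_\hom+\A_\hom+\Ze_\hom$ by a purely algebraic regrouping — the local quadratic term in $\bchi(m)$ produced by $\G_\hom$ and the local term in $m$ produced by $\W_\hom$ being collected into $\A_\eff$ — together with the corrector identities \eqref{eq:mt:correctors}, used to rewrite the homogenized exchange tensor in the symmetric form $\Ev[a\Id-(\nabla\Phi_a)^\intercal a\nabla\Phi_a]$, cf.\ Remark \ref{rk:correctors}.

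\emph{The core: $\Gamma$-convergence of $\G_\e$.} This is the technical heart of the argument, of independent interest, and it proceeds via quenched stochastic two-scale convergence (Sections \ref{section:sto2scalecon}--\ref{section: twoscale lim}). Fix a good realization $\tomega$ and a sequence $m_\e\rightharpoonup m$ with $\sup_\e\G_\e(m_\e)<\infty$; coercivity bounds $\nabla m_\e$ in $L^2(D)$, so along a subsequence $\nabla m_\e$ two-scale converges to $\nabla m(x)+\chi(x,\omega)$, where each column of the corrector field $\chi(x,\cdot)$ is a potential random field, and $|m_\e|\equiv1$ forces $\nabla m(x)$ and $\chi(x,\omega)$ to be tangent to $\St$ at $m(x)$ — precisely the class of admissible two-scale limits of $H^1(D;\St)$-maps identified in Section \ref{section: twoscale lim}, where orientability and the uniform tubular neighborhood of the target are used. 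The liminf inequality follows from the weak lower semicontinuity of the two-scale functional
\begin{equation*}
    (m,\chi)\longmapsto\int_D\Ev\Big[\tfrac12\,a(\omega)\,|\nabla m(x)+\chi(x,\omega)|^2+\big(\nabla m(x)+\chi(x,\omega)\big):\kappa(\omega)\bchi(m(x))\Big]\dx,
\end{equation*}
together with its minimization in $\chi$ (strictly convex and uniquely solvable by \ref{asPara_Ex}) at frozen $x$: the optimality condition $\divv(a(\nabla m+\chi)+\kappa\bchi(m))=0$ is solved by $\chi=\nabla\Phi_a\,\nabla m+\nabla\Phi_\kappa\,\bchi(m)$ with $\Phi_a,\Phi_\kappa$ as in \eqref{eq:mt:correctors}, and substituting back and invoking the ergodic theorem yields $\E_\eff+\K_\eff$ plus the $\bchi(m)$-quadratic term later absorbed into $\A_\eff$. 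For the limsup inequality one constructs, for $m$ in a strongly $H^1$-dense subclass of sufficiently regular $\St$-valued maps and then extending by the strong $H^1$-continuity of $\G_\hom$ and the usual density argument, a recovery sequence of the form $m_\e(x)\approx\pi_\St\big(m(x)+\e\,w_\e(x)\big)$, with $w_\e$ assembled from $\Phi_a(\tfrac{x}{\e},\tomega),\Phi_\kappa(\tfrac{x}{\e},\tomega)$ so that $\nabla m_\e$ two-scale converges to the optimal $\nabla m+\chi$, and $\pi_\St$ the metric projection onto the tubular neighborhood of $\St$, which keeps the competitor $\St$-valued. The two genuinely delicate points are the identification of the admissible two-scale limits under the pointwise manifold constraint, and the recovery construction, since the correctors lie only in $L^2_{\rm loc}(\Rt)$ with no $L^\infty$ control: the projection error must be estimated through a truncation/averaging argument exploiting the sublinearity of $\Phi_a,\Phi_\kappa$ and the bounds \ref{asPara_Ex}--\ref{asPara_DMI}.

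\emph{The magnetostatic term.} Along $m_\e\rightharpoonup m$, the ergodic theorem gives $\Ms(\tfrac{\cdot}{\e},\tomega)\overset{*}{\rightharpoonup}\Ev[\Ms]$ in $L^\infty(D)$ while $m_\e\to m$ strongly in $L^2(D)$, whence $\Ms(\tfrac{\cdot}{\e},\tomega)m_\e\chi_D\rightharpoonup\Ev[\Ms]m\chi_D$ and, by continuity of the stray-field operator on $L^2(\Rt)$, $\hd[\Ms(\tfrac{\cdot}{\e},\tomega)m_\e\chi_D]\rightharpoonup\hd[\Ev[\Ms]m\chi_D]$, both only weakly. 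Thus $\W_\e(m_\e)$ pairs two weakly convergent sequences, and its limit is not the product of the limits: the extra local term $\tfrac{\mu_0}{2}\int_D m\cdot\Ev[(\nabla\Phi_M)^\intercal\nabla\Phi_M]m\dx$ encodes the oscillatory part of $\Ms(\tfrac{\cdot}{\e})m_\e$. To extract it one passes to the potential of the demagnetizing field and uses that $\Ms(\tfrac{\cdot}{\e})m_\e\chi_D-\mu_0\hd[\,\cdot\,]$ is divergence-free on $\Rt$; a stochastic two-scale/div--curl argument in the Beppo--Levi space $\BL$, extending the framework of \cite{ADF15} to the stochastic setting (Subsection \ref{section:demagnefield}), identifies the two-scale limit of $\Ms(\tfrac{\cdot}{\e})m_\e$ through the corrector $\nabla\Phi_M$ of \eqref{eq:mt:correctors}, produces the $(\nabla\Phi_M)^\intercal\nabla\Phi_M$ correction, and shows that the resulting $\W_\hom$ is continuous for the weak $H^1(D;\St)$-convergence. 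The obstacle here is to set up stochastic two-scale convergence in Beppo--Levi spaces together with the corresponding compactness and closedness lemmas, as the fields involved live on all of $\Rt$.

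\emph{Anisotropy, Zeeman, and conclusion.} For $\A_\e$ one writes $\A_\e(m_\e)=\int_D\vphi(\tfrac{x}{\e},m(x),\tomega)\dx+R_\e$ with $|R_\e|\le L_{\rm an}\|m_\e-m\|_{L^1(D)}\to0$ by \ref{asPara_aniso}, and the first integral converges to $\int_D\Ev[\vphi](m(x))\dx$ by the ergodic theorem applied slot-wise — approximating $m$ by simple $\St$-valued maps and using the Lipschitz bound in $s$ to pass to a general $m$. For $\Ze_\e$, combining $\Ms(\tfrac{\cdot}{\e},\tomega)\overset{*}{\rightharpoonup}\Ev[\Ms]$ with $h_a m_\e\to h_a m$ strongly in $L^1(D)$ gives $\Ze_\e(m_\e)\to-\mu_0\int_D h_a\cdot\Ev[\Ms]m\dx$. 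Both terms are therefore continuously convergent, which closes the argument of the first paragraph; together with the equi-coercivity established there, this completes the proof.
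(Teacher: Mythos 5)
Your proposal is correct and follows essentially the same architecture as the paper: $\F_\e=\G_\e+(\W_\e+\A_\e+\Ze_\e)$, quenched two-scale $\Gamma$-convergence for $\G_\e$ with the tangential characterization of admissible two-scale limits, a Beppo--Levi two-scale argument for $\W_\e$, Lipschitz/ergodic estimates for $\A_\e$ and $\Ze_\e$, equi-coercivity from \ref{asPara_Ex}, and assembly via stability of $\Gamma$-limits under continuously convergent perturbations.

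The one place where you deviate is the recovery sequence. You propose $m_\e\approx\pi_{\St}(m+\e w_\e)$ with $w_\e$ built directly from the correctors $\Phi_a(\cdot/\e),\Phi_\kappa(\cdot/\e)$, and you correctly observe that this forces you to control the projection error for correctors that are merely $L^2_{\rm loc}$, via truncation and sublinearity. The paper avoids this difficulty entirely: the recovery sequence is built from \emph{smooth} data, $m_\e=\pi_\M\bigl[m_0+\e\sum_i\vphi_i(x)\,b_i(T_{x/\e}\tomega)\bigr]$ with $\vphi_i\in C^\infty_c(D)$ and $b_i\in C^1(\Omega;\Rt)$, so the perturbation is uniformly $O(\e)$ and lands in the tubular neighborhood for free; the correctors $\Theta_a,\Theta_\kappa$ are only reached afterwards, in a second limit $\delta\to0$, by approximating them in $\Ltpot(\Omega;\Rt)$ with gradients $\nablaomega b_{a,\delta},\nablaomega b_{\kappa,\delta}$ of $C^1(\Omega)$ functions and using the strong $L^2$-continuity of the limiting quadratic functional $F(m_0,\cdot)$. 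This density-then-diagonalize route renders the truncation/sublinearity argument you anticipate unnecessary; if you insist on using the correctors directly you would indeed have to supply that extra (and nontrivial) estimate, so be aware that as written this sub-step of your plan is the harder of the two options and is not carried out in detail.
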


\begin{remark}[Typical trajectories]\label{rem:typicaltrajectories}
The set $\wt{\Omega}\subset\Omega$ with $\P(\wt{\Omega})=1$, on which we prove the main theorem, is given by 
     $\wt{\Omega}=\wt{\Omega}_{\dP}\cap\wt{\Omega}_{\Ms\dP}\cap\wt{\Omega}_{\Ms^2\dP} \cap \wt{\Omega}_{a\dP}\cap\wt{\Omega}_{\kappa\dP}\cap\wt{\Omega}_{\kappa^2/a\dP}$ (see Theorem \ref{thm:lowerbound}). 
In other words, $\wt{\Omega}$ is the intersection of the sets of typical trajectories with respect to the measures $\dP$,  $a(\cdot)\dP(\cdot)$, $\kappa(\cdot)\dP(\cdot)$,   $\kappa^2/a(\cdot)\dP(\cdot)$,  $\Ms(\cdot)\dP(\cdot)$, and  $\Ms(\cdot)^2\dP(\cdot)$, respectively, see Definition \ref{def:typicaltrajectory}, Proposition \ref{prop:typicaltrajectories}, and Remark \ref{rem:mean-value property} below.
\end{remark}

\begin{remark}[Notation for the homogenization correctors]
\label{rk:correctors}
 We will address the existence and uniqueness of the corrector quantities given via \eqref{eq:mt:correctors} later in Subsection \ref{subs:correctors}.
 However, we phrase the stochastic framework in terms of an ergodic dynamical system.
 Thus, to adapt $\nabla\Phi_a,\nabla\Phi_\kappa,\nabla\Phi_M$ to that formulation, we introduce $\Theta_a,\Theta_\kappa,\Theta_M\in\Ltpot(\Omega;\Rt)\subset L^2(\Omega;\Rtt)$ instead, with \eqref{eq:mt:correctors} corresponding to 
 \begin{equation*}
     \left(a\Theta_a + a\Id\right), \, 
     \left(a\Theta_\kappa +\kappa\Id\right), \, 
     \left(\Theta_M + \Ms\Id\right) 
     \in\Ltsol(\Omega;\Rt),
 \end{equation*}
 see  Lemma \ref{lem:correctors} below for details.
\end{remark}
The key step for the proof of Theorem \ref{mainthm} is the analysis of the asymptotic behavior via $\Gamma$-convergence of the two exchange contributions. Since this latter result is of independent interest, we prove it in a broader setting. In what follows, $\M$ is a bounded, $C^2$ orientable hypersurface of $\Rt$ that admits a tubular neighborhood of uniform thickness (in the micromagnetic setting, usually $\M$ is the unit sphere $\St$). The tangent space at $s\in\M$ is denoted by $\T[s]\M$. The vector bundle $\boldT\M$ is given by $\boldT\M\coloneqq \bigcup_{s\in\M}\{s\}\times \bT[s]\M$, with $\bT[s]\M\coloneqq (\T[s]\M)^3$. We will indicate by $\xi^\intercal\coloneqq (\xi_1^\intercal, \xi_2^\intercal, \xi_3^\intercal)$ a generic element of $\bT[s]\M$. This notation is motivated by the fact that if $\xi\coloneqq\nabla m$, with $m\in H^1(D, \M)$ and $\nabla m$ is the transpose of the Jacobian matrix of $m$, the columns $(\xi_1^\intercal, \xi_2^\intercal, \xi_3^\intercal) \coloneqq (\partial_1 m(x), \partial_2 m(x), \partial_3 m(x))$ of $(\nabla m(x))^\intercal$ belong to $\T[m(x)]\M$ for a.e. $x\in D$.
Recalling \eqref{def:funcF} and \eqref{eq:intro:def:bchi}, our result is the following.
\begin{thm}[$\Gamma$-convergence of exchange energy and Dzyaloshinskii-Moriya interaction]\label{thm:GeGammaConv}
    Let $\M$ be a bounded, orientable, $C^2$ hypersurface of $\Rt$ admitting a tubular neighborhood of uniform thickness.
    Under the assumptions of Theorem \ref{mainthm}, for almost every $\tomega\in\Omega$ the family $(\G_\e)_{\e>0}\coloneqq (\E_\e+\K_\e)_{\e>0}$ $\Gamma$-converges with respect to the weak topology on $H^1(D;\hspace{0.03cm}\M)$ to the energy functional 
    \begin{align}
        \Ghom(m)
        &\coloneqq
        \frac{1}{2}\int_\domain \nabla m:\Ev\left[a\Id-(\nabla\Phi_a)^\intercal a\nabla\Phi_a\right]\nabla m\dx\notag\\
        &\quad
        -\int_\domain \nabla m :\Ev\left[\kappa\Id-(\nabla\Phi_a)^\intercal a\nabla\Phi_\kappa\right]\bchi(m)\dx\notag\\
        &\quad
        -\frac{1}{2}\int_\domain \bchi(m) :{\Ev}\left[(\nabla\Phi_\kappa)^\intercal a\nabla\Phi_\kappa\right]\bchi(m)\pi_{\T_s\M}\dx.\label{eq:intro:Ghom}
    \end{align}
    with $\nabla\Phi_a$, $\nabla\Phi_\kappa$ as in \eqref{eq:mt:correctors-intro} and where $\pi_{\T_s\M}\in\Rtt$ is the orthogonal projection onto the tangent space $\T_s\M$ at $s\in\M$.
    Note that $\bchi(m)\pi_{\T_s\M}=\bchi(m)$ for $\M=\St$.
\end{thm}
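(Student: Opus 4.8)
The plan is to prove the $\Gamma$-convergence of $\G_\e = \E_\e + \K_\e$ via stochastic two-scale convergence, following the now-standard three-step structure: identify the homogenized density, prove the $\liminf$ inequality, and then construct a recovery sequence matching it. The natural starting point is equi-coercivity: since $a \geq c_{\rm ex} > 0$ and $|\kappa| \leq C_{\rm DMI}$, a Young-inequality absorption shows that $\G_\e(m) \geq \tfrac{c_{\rm ex}}{4}\|\nabla m\|_{L^2}^2 - C|D|$, so along any sequence with bounded energy we may extract a weakly convergent subsequence in $H^1(D;\M)$ (using that $\M$ is compact, hence $L^\infty$ bounds are automatic). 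Here the constraint $m(x) \in \M$ must be shown to pass to the weak limit; this uses the tubular-neighborhood hypothesis and compactness of $\M$, together with strong $L^2$-convergence from Rellich.

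The heart of the argument is the $\liminf$ inequality. Given $m_\e \rightharpoonup m$ in $H^1(D;\M)$, the sequence $\nabla m_\e$ is bounded in $L^2(D;\Rtt)$, so by the stochastic-two-scale compactness for $H^1$-maps proved in Section \ref{section: twoscale lim} (the analogue of the periodic two-scale $H^1$-compactness), there exists $\Theta \in L^2(D; \Ltpot(\Omega;\Rt))$ with $\Theta(x,\cdot) \in \bT[m(x)]\M$ for a.e.\ $x$, such that $\nabla m_\e \sts \nabla m(x) + \Theta(x,\omega)$ along the typical trajectory $\tomega$. The key algebraic observation is that both $\E_\e$ and $\K_\e$ are, after the rewriting \eqref{eq:intro:DMI}, quadratic/bilinear in $\nabla m_\e$ with stationary coefficients $a(x/\e,\tomega)$, $\kappa(x/\e,\tomega)$: indeed
\[
\G_\e(m_\e) = \int_D \tfrac12 a\Big(\tfrac x\e,\tomega\Big)|\nabla m_\e|^2 + \nabla m_\e : \kappa\Big(\tfrac x\e,\tomega\Big)\bchi(m_\e)\,\dx,
\]
and completing the square in the stationary variable yields $\tfrac12 a|A + B|^2 - \tfrac1{2a}|\kappa\,\bchi|^2$ plus a term linear in $A+B$ whose stochastic-two-scale limit is controlled. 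By weak lower semicontinuity of the convex functional $(A,B)\mapsto \int_\Omega \tfrac12 a|A + B|^2$ under stochastic two-scale convergence (Fatou applied on $D\times\Omega$), one obtains $\liminf \G_\e(m_\e) \geq \int_D \int_\Omega \big[\tfrac12 a|\nabla m + \Theta|^2 + (\nabla m + \Theta):\kappa\bchi(m) - \cdots\big]$. Then one minimizes the resulting expression over all admissible correctors $\Theta(x,\cdot)\in\Ltpot(\Omega;\Rt)$ pointwise in $x$; this cell-minimization is an unconstrained-then-projected quadratic problem in $\Ltpot(\Omega)$ whose Euler--Lagrange equations are exactly the corrector equations \eqref{eq:mt:correctors} for $\Phi_a$ and $\Phi_\kappa$ — after accounting for the constraint that the competitor gradient lies in $\bT[m(x)]\M$, which is where the projection $\pi_{\T_s\M}$ in \eqref{eq:intro:Ghom} enters, and why it disappears when $\M = \St$ (since $\bchi(s)$ already maps into $\T_s\St$ for $s$ on the sphere). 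Carrying out this minimization and substituting the optimal $\Theta$ produces precisely $\Ghom(m)$.

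For the $\limsup$ inequality, the strategy is to first treat $m \in C^\infty(\overline D;\M)$ (or $m$ with $\nabla m$ piecewise constant adapted to a partition of $D$) and then recover general $m \in H^1(D;\M)$ by density together with continuity of $\Ghom$ in the strong $H^1$ topology (the density of smooth maps into $\M$ in $H^1(D;\M)$ follows from the tubular-neighborhood hypothesis via the nearest-point projection $\pi_\M$). For smooth $m$, the recovery sequence is the classical oscillating-test-function construction: set $m_\e(x) := \pi_\M\big(m(x) + \e\, \phi(x)\, \Phi_\ast(x/\e,\tomega)\big)$ where $\Phi_\ast$ is built from the optimal correctors $\Phi_a,\Phi_\kappa$ realizing the cell minimum at each point (suitably cut off and, if necessary, truncated/mollified in $\omega$ to ensure the requisite regularity), with $\phi$ a partition-of-unity weight. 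One checks $m_\e \to m$ strongly in $L^2$, $\nabla m_\e \sts \nabla m + \Theta_\ast$, and that the energy converges using the mean-value (ergodic) theorem along $\tomega$ for the stationary integrands $a$, $\kappa$, $\kappa^2/a$ — this is exactly why the trajectory set $\wt\Omega$ in Remark \ref{rem:typicaltrajectories} must include typical points for the measures $a\dP$, $\kappa\dP$, $\kappa^2/a\,\dP$.

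The main obstacle I expect is twofold and intertwined: \emph{(i)} handling the nonlinear manifold constraint throughout the two-scale machinery — in particular establishing that the stochastic two-scale limit of $\nabla m_\e$ for an $\M$-valued sequence takes the form $\nabla m + \Theta$ with $\Theta(x,\cdot)$ valued in the \emph{tangent} bundle $\bT[m(x)]\M$ (this is the content of Section \ref{section: twoscale lim} and is what forces the projection $\pi_{\T_s\M}$ into the final density), and correspondingly making sense of and solving the corrector problems as \emph{constrained} minimizations; and \emph{(ii)} the lack of periodicity, which means the recovery sequence cannot use a fixed periodic corrector but must invoke the pointwise ergodic theorem along the fixed realization $\tomega$, requiring careful bookkeeping so that the (countably many) applications of the mean-value property all hold on one common full-measure set $\wt\Omega$. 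The purely algebraic completion-of-the-square computation and the convex lower-semicontinuity are, by contrast, routine.
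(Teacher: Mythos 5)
Your overall route is the same as the paper's: rewrite $\G_\e$ by completing the square in the DMI term, use the tangent-valued stochastic two-scale compactness for $H^1(D;\M)$-sequences to get $\nabla m_\e \wsts \nabla m + \Theta$ with $\Theta(x,\cdot)\in\Ltpot(\Omega;\T[m(x)]\M)$, obtain the lower bound by convexity and then minimize pointwise in $x$ over admissible correctors (which produces exactly the corrector equations and the projection $\pi_{\T_s\M}$), and build the recovery sequence by projecting an $\e$-small oscillating perturbation back onto $\M$ via the nearest-point projection. One small imprecision: ``Fatou on $D\times\Omega$'' is not the right justification for lower semicontinuity of $\int\frac12 a|\cdot|^2$ under weak two-scale convergence; the correct (and standard) device, which the paper uses, is to expand $\frac12 a|X-\Psi_n|^2\ge 0$ against a countable family of admissible test fields $\Psi_n$ and pass to the two-scale limit term by term, then let $\Psi_n$ approximate the two-scale limit.

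There is, however, one step that would genuinely fail as written: your reduction of the limsup inequality to smooth $m$ via ``density of smooth maps into $\M$ in $H^1(D;\M)$, which follows from the tubular-neighborhood hypothesis via $\pi_\M$.'' For $D\subset\R^3$ and $\M=\St$ this density is \emph{false}: by the Schoen--Uhlenbeck/Bethuel obstruction, smooth $\M$-valued maps are dense in $W^{1,2}(D;\M)$ only if $\pi_2(\M)=0$, and $\pi_2(\St)=\Z$; the map $x/|x|$ is the standard non-approximable example. The mollify-then-project argument breaks precisely because near a topological point singularity the mollified map exits every tubular neighborhood of $\M$. The paper sidesteps this entirely: in the limsup construction $m_0\in H^1(D;\M)$ is \emph{never} replaced by a smooth map; instead one sets $m_\e=\pi_\M[m_0+\e(\vphi_{a,\delta}b_{a,\delta}+\vphi_{\kappa,\delta}b_{\kappa,\delta})]$ for the given $m_0$ (which is legitimate since $m_0(x)\in\M$ a.e.\ and the added term is uniformly $O(\e)$, so the argument of $\pi_\M$ stays in the tubular neighborhood), and the smoothing/density is applied only to the data of the corrector, i.e.\ $\vphi_{a,\delta}^\intercal\to\nabla m_0$, $\vphi_{\kappa,\delta}^\intercal\to\bchi(m_0)\pi_{\T_s\M}$ in $L^2(D)$ and $\nablaomega b_{\cdot,\delta}\to\Theta_\cdot$ in $\Ltpot(\Omega;\Rt)$, followed by a diagonal argument in $(\e,\delta)$. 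You should restructure your limsup step accordingly; the rest of your outline is sound.
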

\begin{remark}[Anisotropic exchange parameter and DMI constant]
    We stress that Theorem \ref{thm:GeGammaConv} also holds (with the same proof) when the exchange parameter is given by a symmetric, uniformly elliptic coefficient field with $a(x,\omega)\in\Rtt$ and the DMI constant by a bounded coefficient field with $\kappa(x,\omega)\in\Rtt$ (interpreted as in \eqref{eq:intro:DMI}).
    However, to keep the notation simpler, we assume scalar parameters for the remainder of the paper (so that, e.g., we can write $a|\nabla m|^2$ instead of $\nabla m:(a\nabla m)$).
\end{remark}
The remaining energy components can be treated individually since they converge $\Gamma$-continuously. 

\begin{prop}[$\Gamma$-continuous convergence of the magnetostatic self-energy]\label{prop:intro:Whom}
  Under the assumptions of Theorem \ref{mainthm}, for almost every $\tomega\in\Omega$ the family $(\W_\e)_{\e>0}$ continuously $\Gamma$-converges with respect to the strong $L^2(\domain;\hspace{0.03cm}\St)$-topology and hence also with respect to the weak topology on $H^1(D;\hspace{0.03cm}\St)$ to the energy functional 
  \begin{equation}\label{eq:intro:def:Whom}
      \Whom(m)
      =
      -\frac{\mu_0}{2} \int_\domain h_d\left[\Ev\left[\Ms\right]m\right]\cdot \Ev\left[\Ms\right] m\dx
      + \frac{\mu_0}{2}\int_\domain m\cdot\Ev\left[(\nabla\Phi_M)^\intercal\nabla\Phi_M\right]m\dx
  \end{equation}
  with $\nabla\Phi_M$ as in \eqref{eq:mt:correctors}.
\end{prop}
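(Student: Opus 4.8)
\emph{The plan.} The functional $\W_\e$ is a nonlocal quadratic form that depends on the competitor $m$ only through the source $u_\e:=\Ms(\cdot/\e,\tomega)\,m\,\chi_\domain\in L^2(\Rt;\Rt)$ (extended by $0$ outside $\domain$): writing $h_d\colon L^2(\Rt;\Rt)\to L^2(\Rt;\Rt)$ for the bounded, self-adjoint stray-field operator, whose value $h_d[u_\e]$ is curl-free and solves the magnetostatic Maxwell equation for $u_\e$, one has $\W_\e(m)=-\tfrac{\mu_0}{2}\langle h_d[u_\e],u_\e\rangle_{L^2(\Rt)}$. Since ``continuously $\Gamma$-converges'' means $\W_\e(m_\e)\to\Whom(m)$ along \emph{every} sequence $m_\e\to m$, and since weak $H^1(\domain;\St)$-convergence implies strong $L^2(\domain;\St)$-convergence by the Rellich--Kondrachov theorem ($\domain$ being bounded and Lipschitz), I would reduce the proof to establishing this convergence along an arbitrary sequence $m_\e\to m$ in $L^2(\domain;\St)$; the weak $H^1$-statement then follows at once, and, continuous convergence being stable under sums with $\Gamma$-convergent families, this also leaves the $\Gamma$-limit of $\G_\e$ unchanged when $\W_\e$ is added. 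Throughout I would argue along a fixed typical trajectory $\tomega\in\wt\Omega$ (cf.\ Remark~\ref{rem:typicaltrajectories}), so that in particular $\Ms(\cdot/\e,\tomega)\rightharpoonup\Ev[\Ms]$ and $\Ms(\cdot/\e,\tomega)^2\rightharpoonup\Ev[\Ms^2]$ weakly-$*$ in $L^\infty(\domain)$.

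First I would show that $u_\e$ stochastically two-scale converges \emph{strongly} to $U(x,\omega):=\Ms(\omega)m(x)\chi_\domain(x)$. Indeed, since $\Ms\in L^\infty(\Omega)$ one has $\|u_\e-\Ms(\cdot/\e,\tomega)m\chi_\domain\|_{L^2(\Rt)}\le C\,\|m_\e-m\|_{L^2(\domain)}\to0$, while $\Ms(\cdot/\e,\tomega)\eta$ strongly two-scale converges to $\Ms(\omega)\eta(x)$ for $\eta\in\Cic(\Rt;\Rt)$ (here the continuity of $(x,\omega)\mapsto T_x\omega$ enters), and this extends to $\eta=m$ by density. In particular $u_\e\rightharpoonup\Ev[\Ms]m\chi_\domain$ weakly in $L^2(\Rt;\Rt)$. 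I would then set $v_\e:=h_d[u_\e]$; boundedness of $h_d$ gives $\|v_\e\|_{L^2(\Rt)}\le C\|u_\e\|_{L^2(\Rt)}\le C$, and $v_\e=\nabla\psi_\e$ with $\psi_\e$ in the Beppo--Levi space $\BL(\Rt)$ and $\|\nabla\psi_\e\|_{L^2}$ bounded. Using the quenched stochastic two-scale compactness in Beppo--Levi spaces established in this subsection (the stochastic counterpart of the theory of \cite{ADF15}), up to a subsequence $v_\e$ stochastically two-scale converges to $V(x,\omega)=v_0(x)+\Xi(x,\omega)$, where $v_0\in L^2(\Rt;\Rt)$ is curl-free — necessarily $v_0=h_d[\Ev[\Ms]m]$, being the weak $L^2$-limit of $v_\e=h_d[u_\e]$ with $h_d$ weakly continuous — and $\Xi\in L^2(\Rt;\Ltpot(\Omega;\Rt))$.

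Next I would identify the corrector $\Xi$: testing the weak magnetostatic equation for $v_\e$ against oscillating functions $\phi_\e(x)=\e\psi(x)\,b(T_{x/\e}\tomega)$, with $\psi\in\Cic(\Rt)$ and $b$ in a dense subclass of the domain of $\nabla_\omega$ (so $\nabla\phi_\e$ two-scale converges to $\psi(x)\nabla_\omega b(\omega)$), and passing to the two-scale limit, one obtains a microscopic cell problem whose $\Ltpot(\Omega;\Rt)$-solution is, in the notation of \eqref{eq:mt:correctors} and Remark~\ref{rk:correctors}, $\Xi(x,\omega)=\Theta_M(\omega)\,m(x)\chi_\domain(x)$ with $\Theta_M=\nabla_\omega\Phi_M$. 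Since $\Ev[\Theta_M]=0$, the macroscopic equation contributes nothing further and merely re-confirms $v_0=h_d[\Ev[\Ms]m]$; the limit $V$ being subsequence-independent, the full sequence converges. Finally, with $u_\e$ converging strongly two-scale and $v_\e$ weakly two-scale, the strong--weak product rule gives $\langle v_\e,u_\e\rangle_{L^2(\Rt)}\to\Ev\!\int_{\Rt}V\cdot U\dx$, hence
\begin{equation*}
\W_\e(m_\e)\;\longrightarrow\;-\frac{\mu_0}{2}\,\Ev\!\int_{\Rt}\bigl(v_0(x)+\Theta_M(\omega)m(x)\chi_\domain(x)\bigr)\cdot\Ms(\omega)m(x)\chi_\domain(x)\dx .
\end{equation*}
The $v_0$-part equals $-\tfrac{\mu_0}{2}\int_\domain h_d[\Ev[\Ms]m]\cdot\Ev[\Ms]m\dx$, and in the $\Theta_M$-part the orthogonality $\Ev[\Theta_M^\intercal(\Theta_M+\Ms\Id)]=0$ between potential and solenoidal fields turns $\Ev[\Ms\Theta_M]$ into $-\Ev[(\nabla\Phi_M)^\intercal\nabla\Phi_M]$, producing $+\tfrac{\mu_0}{2}\int_\domain m\cdot\Ev[(\nabla\Phi_M)^\intercal\nabla\Phi_M]m\dx$, so that the sum is precisely $\Whom(m)$ as in \eqref{eq:intro:def:Whom}.

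\emph{The main obstacle.} The delicate point is the two-scale analysis of the demagnetizing fields $v_\e$. Unlike the exchange and DMI terms treated in Theorem~\ref{thm:GeGammaConv}, $h_d$ is \emph{not} a compact operator, so weak convergence of $u_\e$ does not by itself control the quadratic coupling $\langle h_d[u_\e],u_\e\rangle$: the oscillations of $\Ms(\cdot/\e,\tomega)$ genuinely generate the two-scale corrector $\Xi=\Theta_M\,m\,\chi_\domain$, whose self-energy is exactly the surplus term $\tfrac{\mu_0}{2}\int_\domain m\cdot\Ev[(\nabla\Phi_M)^\intercal\nabla\Phi_M]m\dx$ in $\Whom$. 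Capturing it rigorously forces the extension of stochastic two-scale convergence to Beppo--Levi spaces on the whole of $\Rt$ — in particular controlling the slow decay at infinity of the merely-$L^2$ gradient field $\nabla\psi_\e$, whose source $u_\e$ is compactly supported but which is not — which is precisely what makes the dedicated subsection adapting \cite{ADF15} necessary. Once that machinery is in place, the strong two-scale convergence of the sources, the product rule, and the final algebraic manipulation are routine.
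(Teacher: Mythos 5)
Your proposal is correct and follows essentially the same route as the paper: strong stochastic two-scale convergence of the sources $\Ms(\cdot/\e,\tomega)m$ (using typicality with respect to $\Ms^2\dP$ and $|m|=1$), two-scale compactness for the stray-field potentials in the Beppo--Levi space, identification of the corrector $\Theta_M m$ via oscillating test functions $\e\psi\, b(T_{x/\e}\tomega)$, the weak--strong product rule, and the $\Ltpot$/$\Ltsol$ orthogonality to convert $\Ev[\Ms\Theta_M]$ into $-\Ev[(\nabla\Phi_M)^\intercal\nabla\Phi_M]$. The only (immaterial) difference is organizational: you run the two-scale argument directly along the varying sequence $m_\e$, whereas the paper first proves pointwise convergence at fixed $m$ and then adds the Lipschitz estimate $|\W_\e(m_\e)-\W_\e(m_0)|\le C\|m_\e-m_0\|_{L^2}$ coming from \eqref{stabilityesti}.
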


\begin{prop}[$\Gamma$-continuous convergence of the anisotropy and Zeeman energy]\label{prop:intro:AhomZehom}
    Under the assumptions of Theorem \ref{mainthm}, for almost every $\tomega\in\Omega$ the families $(\A_\e)_{\e>0}$ and $(\Ze_\e)_{\e>0}$ continuously $\Gamma$-converge with respect to the strong $L^2(\domain;\hspace{0.03cm}\St)$-topology and hence also with respect to the weak topology on $H^1(D;\hspace{0.03cm}\St)$ to the energy functionals 
  \begin{align}
      \Ahom(m)
      &=
      \int_\domain\Ev\left[\vphi\right](m)\dx,\label{eq:intro:def:Ahom}\\
      \Zehom(m)
      &=
      -\mu_0\int_\domain h_a\cdot \Ev\left[\Ms\right]m \dx. \label{eq:intro:def:Zehom}
  \end{align}
\end{prop}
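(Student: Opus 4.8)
\emph{Proof strategy.} The plan is to establish the stronger property of \emph{continuous convergence} along the strong $L^2(D;\St)$-topology, namely that $\A_\e(m_\e)\to\Ahom(m)$ and $\Ze_\e(m_\e)\to\Zehom(m)$ whenever $m_\e\to m$ strongly in $L^2(D;\St)$ (note that then $m\in L^\infty(D;\St)$ automatically, since $|m_\e|\equiv1$). Continuous convergence implies the $\Gamma$-convergence of $(\A_\e)$ to $\Ahom$ \emph{and} of $(-\A_\e)$ to $-\Ahom$, i.e.\ $\Gamma$-continuous convergence, and similarly for $\Ze_\e$; this is exactly what is invoked in the proof of Theorem~\ref{mainthm} in order to add these terms as perturbations to the $\Gamma$-limit of $\G_\e$. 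The assertion for the weak $H^1(D;\St)$-topology is then immediate, because on the bounded Lipschitz domain $D$ a weakly $H^1$-convergent sequence converges strongly in $L^2$ by Rellich compactness.

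For the Zeeman term, which is linear in $m$, the only ingredient needed is that, for $\tomega$ in a set of probability one, the rescaled stationary field $x\mapsto\Ms(x/\e,\tomega)$ converges weakly-$*$ in $L^\infty(D)$ to the constant $\Ev[\Ms]$; this is the mean-value property of typical trajectories (Definition~\ref{def:typicaltrajectory}, Proposition~\ref{prop:typicaltrajectories}, Remark~\ref{rem:mean-value property}) applied to $\Ms$, together with the uniform bound $0\le\Ms\le C_{\rm sat}$. Given $m_\e\to m$ in $L^2(D;\St)$ I would split
\[
\int_\domain h_a\cdot\Ms(x/\e,\tomega)\,m_\e\dx
= \int_\domain h_a\cdot\Ms(x/\e,\tomega)\,(m_\e-m)\dx + \int_\domain\bigl(h_a\cdot m\bigr)\,\Ms(x/\e,\tomega)\dx.
\]
The first integral tends to $0$ because $|\Ms(\cdot/\e,\tomega)|\le C_{\rm sat}$ while $h_a\cdot(m_\e-m)\to0$ in $L^1(D)$ (as $m_\e\to m$ in measure and $|h_a\cdot(m_\e-m)|\le 2|h_a|\in L^1$); the second tends to $\Ev[\Ms]\int_\domain h_a\cdot m\dx$, since $h_a\cdot m\in L^1(D)$ and $\Ms(\cdot/\e,\tomega)\rightharpoonup^*\Ev[\Ms]$ in $L^\infty(D)$. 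Multiplying by $-\mu_0$ gives $\Ze_\e(m_\e)\to\Zehom(m)$.

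For the anisotropy term I would argue in two reductions, both relying on the uniform-in-$\omega$ Lipschitz bound \ref{asPara_aniso}. \emph{First}, from \ref{asPara_aniso} one has $|\A_\e(m_\e)-\A_\e(m)|\le L_{\rm an}\int_\domain|m_\e-m|\dx\le L_{\rm an}|D|^{1/2}\|m_\e-m\|_{L^2(D)}\to0$, so it suffices to show $\int_\domain\vphi(x/\e,m(x),\tomega)\dx\to\int_\domain\Ev[\vphi](m(x))\dx$ for each fixed $m\in L^\infty(D;\St)$; note that $\Ev[\vphi]$ is itself $L_{\rm an}$-Lipschitz in $s$, by Jensen's inequality and \ref{asPara_aniso}. \emph{Second}, fix a countable dense set $\{s_j\}_{j\in\N}\subset\St$ and, given $\delta>0$, a simple function $\bar m=\sum_{k=1}^N s_{j_k}\chi_{E_k}$, with $\{E_k\}_{k=1}^N$ a measurable partition of $D$ which may be taken to consist of finite unions of cubes, such that $\|m-\bar m\|_{L^1(D)}<\delta$. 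The two Lipschitz bounds yield $\bigl|\int_\domain(\vphi(x/\e,m,\tomega)-\vphi(x/\e,\bar m,\tomega))\dx\bigr|\le L_{\rm an}\delta$ and $\bigl|\int_\domain(\Ev[\vphi](m)-\Ev[\vphi](\bar m))\dx\bigr|\le L_{\rm an}\delta$, while for the simple function itself
\[
\int_\domain\vphi(x/\e,\bar m(x),\tomega)\dx
= \sum_{k=1}^N\int_{E_k}\vphi(x/\e,s_{j_k},\tomega)\dx
\xrightarrow[\e\to0]{}\sum_{k=1}^N|E_k|\,\Ev[\vphi(\cdot,s_{j_k})]
= \int_\domain\Ev[\vphi](\bar m(x))\dx,
\]
each of the finitely many terms converging by the mean-value property for the stationary field $\vphi(\cdot,s_{j_k})$. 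Combining the three bounds and letting $\delta\to0$ gives the claim. All of this holds for $\tomega$ in a set of full probability --- namely $\wt\Omega$ of Remark~\ref{rem:typicaltrajectories}, intersected if necessary with the typical-trajectory sets of the countably many fields $\vphi(\cdot,s_j)$.

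The argument is largely bookkeeping, and the one delicate point is the second reduction for the anisotropy term: one must ensure that a \emph{single} exceptional null set works for all approximating simple functions simultaneously --- this is where separability of $\St$ together with the Lipschitz bound enters --- and that the ergodic/mean-value convergence is available over the measurable pieces $E_k$ rather than only over cubes, which is precisely what the typical-trajectory machinery delivers. Apart from this, the sole genuinely structural hypothesis used is the uniform-in-$\omega$ Lipschitz continuity \ref{asPara_aniso}; without it the reduction to simple functions, and hence the whole scheme, would break down.
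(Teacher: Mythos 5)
Your proposal is correct and follows essentially the same route as the paper: the same splitting $|\A_\e(m_\e)-\Ahom(m)|\le|\A_\e(m_\e)-\A_\e(m)|+|\A_\e(m)-\Ahom(m)|$ with the uniform Lipschitz bound \ref{asPara_aniso} handling the first term, and the ergodic theorem handling the second, with the analogous linear estimate for the Zeeman term. The only difference is that the paper compresses the second step into the phrase ``Birkhoff's ergodic theorem combined with an approximation argument,'' whereas you spell that approximation out (simple functions valued in a countable dense subset of $\St$, with the countably many typical-trajectory sets intersected), which is exactly the intended argument and, if anything, more careful about the choice of the exceptional null set.
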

\BBB
\section{Stochastic two-scale convergence and homogenization correctors}
\label{section:sto2scalecon}
We recall here the basic properties of two-scale convergence and introduce the homogenization correctors.
For most of the proofs we rely on the Birkhoff Ergodic Theorem and on the concept of stochastic two-scale convergence (see, e.g., \cite{AdBMN21, ZP06} for a more extensive overview on this topic). 
For convenience of the reader, in this section we follow \cite[Section 2.2]{AdBMN21}, recalling the definitions and main results which we will apply in the forthcoming sections.
In what follows, recall that $D$ is a bounded, open domain in $\Rt$, and that $(\Omega,\Sigma,\P)$ is a probability space endowed with an ergodic dynamical system $(T_x)_{x\in \Rt}$.

\begin{thm}[Birkhoff Ergodic Theorem]
Let $f\in L^1(\Omega)$. 
Then, for $\P$-almost every $\tomega\in\Omega$ and for every bounded Borel set $A\subset \Rt$, there holds
    \begin{equation}
        \label{Birkhoff}
        \lim_{t\to\infty}{1\over t^3|A|}\int_{tA}f(T_x\tomega)\dx  = \int_\Omega f(\omega)\dPo =\Ev[f].
    \end{equation}
\end{thm}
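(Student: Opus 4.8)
The plan is to deduce the stated version from the classical pointwise ergodic theorem for a single measure-preserving transformation, using a standard discretization-plus-approximation argument. First I would recall the classical Birkhoff theorem in the one-parameter (or $\Z^3$-indexed) setting: if $\tau_1,\tau_2,\tau_3$ are the commuting measure-preserving maps $\tau_i := T_{e_i}$ on $(\Omega,\Sigma,\P)$, then for $f\in L^1(\Omega)$ the Cesàro averages over the discrete cube $\{0,\dots,n-1\}^3$ converge $\P$-a.e.\ and in $L^1$ to the conditional expectation of $f$ with respect to the $\sigma$-algebra $\mathcal I$ of sets invariant under all $\tau_i$. Under the ergodicity assumption \ref{asT_Ergodicity}, $\mathcal I$ is trivial, so this limit is the constant $\Ev[f]$. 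The continuous-parameter version over a general bounded Borel set $A$ then follows: one covers $tA$ by unit cubes centered at lattice points, controls the discrepancy near $\partial(tA)$ by the fact that $|A|>0$ and the number of boundary cubes is $o(t^3)$, and replaces the integrand $f(T_x\tomega)$ on each unit cube by its average over that cube — which is itself an $L^1(\Omega)$ function to which the discrete theorem applies. A density argument (approximating $f\in L^1$ by bounded functions, and using a maximal inequality to pass to the limit) handles the general integrable case.

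The key steps, in order, are: (i) pass from the $\Rt$-action to the $\Z^3$-action generated by $\tau_i=T_{e_i}$, noting these commute by \ref{asT_SumGroupAction} and preserve $\P$ by \ref{asT_PInvariance}; (ii) invoke the multiparameter pointwise ergodic theorem (Wiener's version for $\Z^d$-actions, or iterate the one-dimensional Birkhoff theorem) to get a.e.\ convergence of discrete cube averages to $\Ev[f\mid\mathcal I]$; (iii) use ergodicity to identify $\Ev[f\mid\mathcal I]=\Ev[f]$ a.s.; (iv) relate the continuous average $\frac{1}{t^3|A|}\int_{tA}f(T_x\tomega)\dx$ to discrete averages by a Fubini/cube-decomposition argument, using that the "one-block" averaging map $\omega\mapsto\int_{[0,1)^3}f(T_y\omega)\dy$ lies in $L^1(\Omega)$ and that the symmetric difference of $tA$ with a union of unit cubes has measure $o(t^3)$; (v) upgrade from bounded Borel sets $A$ to all $f\in L^1$ via truncation and the maximal ergodic inequality. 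The measurability needed to make the integrals $\int_{tA}f(T_x\tomega)\dx$ well-defined comes from \ref{asT_measurability}.

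The main obstacle I anticipate is step (iv): passing cleanly from the discrete (lattice-indexed) ergodic averages to the genuinely continuous spatial averages over an arbitrary bounded Borel set $tA$, rather than over cubes or balls along which the ergodic theorem is most naturally stated. One must be careful that the "boundary error" is negligible — this is where $|A|>0$ enters, so that the main term is of order $t^3|A|$ while the cubes meeting $\partial(tA)$ contribute only $o(t^3)$ after another application of Birkhoff to $|f|$ — and that the reduction to the one-block function $\omega\mapsto\int_{[0,1)^3}f(T_y\omega)\,\mathrm{d}y$ is justified, which uses continuity of $(x,\omega)\mapsto T_x\omega$ and Fubini. Everything else is routine once the classical multiparameter ergodic theorem is granted; indeed, since the paper merely states this as a recalled tool, in practice one would simply cite \cite{ZP06} or \cite{AdBMN21} for the full argument rather than reproduce it.
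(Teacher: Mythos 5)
The paper does not actually prove this statement: it is recalled as a classical tool, with the argument delegated to the cited literature (\cite{ZP06}, \cite{krengel}, \cite{zeppieri}), so there is no in-paper proof to compare yours against. Your outline follows the standard derivation and most of it is sound, but step (iii) as written contains a genuine gap.

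The gap: you pass to the $\Z^3$-action generated by $\tau_i=T_{e_i}$ and claim that, by \ref{asT_Ergodicity}, the $\sigma$-algebra $\mathcal I$ of sets invariant under all $\tau_i$ is trivial, so that $\Ev[f\mid\mathcal I]=\Ev[f]$. This does not follow. Assumption \ref{asT_Ergodicity} asserts triviality of the $\sigma$-algebra of sets invariant under the \emph{full} $\R^3$-action, which is a weaker conclusion than triviality of the $\Z^3$-invariant $\sigma$-algebra: a set invariant under integer translations need not be invariant under all real translations, so the discrete subaction of an ergodic flow can fail to be ergodic. (One-dimensional example: $\Omega=\R/\Z$ with Lebesgue measure and $T_t\omega=\omega+t$; the flow is ergodic, but $T_{e_1}=\Id$ and its invariant $\sigma$-algebra is everything.) With that identification broken, your discrete averages converge only to a possibly non-constant $\Z^3$-invariant function and the argument stalls. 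The standard repair is to identify the limit \emph{after} returning to the continuous averages: since $|tA\,\triangle\,(tA+y)|=t^3\,|A\,\triangle\,(A+y/t)|=o(t^3)$ for each fixed $y\in\R^3$ (continuity of translation in $L^1(\Rt)$), the a.e.\ limit $g(\omega)$ of $\frac{1}{t^3|A|}\int_{tA}f(T_x\omega)\dx$ satisfies $g(T_y\omega)=g(\omega)$ for a.e.\ $\omega$ and every $y$, hence is a.s.\ constant by \ref{asT_Ergodicity} (in the equivalent functional form recalled right after it), and equals $\Ev[f]$ by the accompanying $L^1$-convergence. Alternatively, invoke Wiener's ergodic theorem directly for the $\R^3$-action over balls, where the relevant invariant $\sigma$-algebra is exactly the one \ref{asT_Ergodicity} trivializes, and then run your cube/boundary approximation to pass to general $A$. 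Two smaller points: the null set must be independent of $A$, which your approximation step should secure by first treating the countable family of rational cubes and then sandwiching a general bounded Borel $A$ (for $f\ge0$ first, then splitting $f$ into positive and negative parts); and your step (v) conflates the two truncations --- the maximal-inequality/density argument upgrades from bounded $f$ to $f\in L^1(\Omega)$, not from special sets $A$ to general ones.
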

\begin{remark}[Equivalent formulation of the Birkhoff Ergodic Theorem]
\label{rk:Birkhoff}
The previous result can be equivalently formulated in terms of weak$^\ast$ convergence of oscillating test functions. We refer the interested reader, e.g., to \cite[Theorem 2.9]{zeppieri} and \cite{krengel}. 
\end{remark}
Note that the set of $\tomega\in\Omega$ for which \eqref{Birkhoff} holds  depends on the function $f$. Hence, the Birkhoff Ergodic Theorem is not sufficient to obtain results valid almost surely for all functions $f$. 
This inconvenience can be avoided by restricting the study to the set of typical trajectories.

\begin{definition}
   \label{def:typicaltrajectory}
       Let $\widetilde{w}\in\Omega$. We say that $\tomega$ is a  {\it typical trajectory} if
          \begin{equation}
              \notag
              \lim_{t\to\infty}{1\over t^3|A|}\int_{tA} g(T_x\widetilde{\omega})\dx =\int_\Omega g(\omega)\dPo=\Ev[g] ,
          \end{equation}
    for every bounded Borelian $A\subset\R^3$ with $|A|>0$ and for every $g\in C(\Omega)$.
    The set of typical trajectories $\tomega$ will be denoted by $\widetilde{\Omega}_{\dP}$.
\end{definition}

Due to the separability of the space $C(\Omega)$ of continuous random variables, this restriction to the set of typical trajectories actually still includes almost every $\omega\in \Omega$.

\begin{prop}[{{\cite[Proposition 2.2.1]{AdBMN21}}}]
\label{prop:typicaltrajectories}
    Let $\widetilde{\Omega}_{\dP}$ be the set of typical trajectories. Then $\P(\widetilde{\Omega}_{\dP})=1$.
\end{prop}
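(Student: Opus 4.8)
\textbf{Proof proposal for Proposition \ref{prop:typicaltrajectories}.}

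The plan is to deduce the full-measure statement for $\widetilde\Omega_{\dP}$ from the Birkhoff Ergodic Theorem applied to a countable dense family of test functions, exploiting the separability of $C(\Omega)$. First I would fix a countable set $\{g_k\}_{k\in\N}\subset C(\Omega)$ that is dense in $C(\Omega)$ with respect to the uniform norm — such a set exists because $\Omega$ is a compact metric space, hence $C(\Omega)$ is separable. For each $g_k$, the Birkhoff Ergodic Theorem (applied with $f=g_k\in L^1(\Omega)$, since continuous functions on the compact space $\Omega$ are bounded) yields a set $\Omega_k\subset\Omega$ with $\P(\Omega_k)=1$ such that, for all $\tomega\in\Omega_k$ and all bounded Borel $A\subset\Rt$ with $|A|>0$,
\begin{equation*}
    \lim_{t\to\infty}\frac{1}{t^3|A|}\int_{tA}g_k(T_x\tomega)\dx=\Ev[g_k].
\end{equation*}
Here one should note that the exceptional null set in Birkhoff's theorem can be chosen independently of $A$: it suffices to prove the convergence for $A$ ranging over a countable family (e.g. balls or cubes with rational centers and radii) and then pass to general bounded Borel sets by an approximation argument, which is exactly the content of the statement of the Birkhoff Ergodic Theorem as quoted above (the exceptional set there already depends only on $f$, not on $A$).

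Next I would set $\widetilde\Omega_0:=\bigcap_{k\in\N}\Omega_k$, so that $\P(\widetilde\Omega_0)=1$ as a countable intersection of full-measure sets. The remaining step is to show $\widetilde\Omega_0\subset\widetilde\Omega_{\dP}$, i.e. that the Birkhoff averages converge for \emph{every} $g\in C(\Omega)$ whenever they converge for all $g_k$. Fix $\tomega\in\widetilde\Omega_0$, a bounded Borel set $A$ with $|A|>0$, and $g\in C(\Omega)$; given $\delta>0$ choose $k$ with $\|g-g_k\|_{\infty}<\delta$. Then for every $t>0$,
\begin{equation*}
    \left|\frac{1}{t^3|A|}\int_{tA}g(T_x\tomega)\dx-\Ev[g]\right|
    \leq
    \left|\frac{1}{t^3|A|}\int_{tA}\big(g(T_x\tomega)-g_k(T_x\tomega)\big)\dx\right|
    +\left|\frac{1}{t^3|A|}\int_{tA}g_k(T_x\tomega)\dx-\Ev[g_k]\right|
    +\big|\Ev[g_k]-\Ev[g]\big|.
\end{equation*}
The first and third terms on the right-hand side are each bounded by $\delta$ (using $|tA|=t^3|A|$ for the first, and $|\Ev[g_k]-\Ev[g]|\leq\|g_k-g\|_\infty<\delta$ for the third), while the middle term tends to $0$ as $t\to\infty$ since $\tomega\in\Omega_k$. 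Hence $\limsup_{t\to\infty}|\cdots|\leq 2\delta$, and letting $\delta\to 0$ gives the claim, so $\tomega\in\widetilde\Omega_{\dP}$ and therefore $\P(\widetilde\Omega_{\dP})\geq\P(\widetilde\Omega_0)=1$.

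The only genuinely delicate point is ensuring that the exceptional null set in Birkhoff's theorem does not depend on the Borel set $A$; this is standard and is already built into the formulation of the Birkhoff Ergodic Theorem cited above (one proves it first for a countable generating family of sets and extends by monotone-class/approximation arguments), so in the write-up I would simply invoke that formulation. Everything else is an elementary $\delta/3$-type density argument, and no new ideas beyond separability of $C(\Omega)$ are needed.
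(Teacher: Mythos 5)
Your proof is correct and is exactly the standard argument behind this statement: the paper does not reprove it but cites \cite[Proposition 2.2.1]{AdBMN21}, and the remark immediately preceding the proposition (``Due to the separability of the space $C(\Omega)$\dots'') confirms that the intended route is precisely your combination of Birkhoff's theorem on a countable dense subset of $C(\Omega)$ with a $\|\cdot\|_\infty$-density argument. The one delicate point you flag --- that the exceptional null set in the quoted Birkhoff theorem depends only on $f$ and not on $A$ --- is indeed built into the formulation stated in the paper, so nothing is missing.
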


For our purposes in this paper, rather than the original formulation of Birkhoff's Theorem, we will use the following extension on the set of typical trajectories obtained by rescaling and approximating compactly supported continuous via simple functions.

\begin{prop}[Mean-value property, {{\cite[Proposition 2.2.2]{AdBMN21}}}]
\label{prop:mean-value property}
Let $g\in C(\Omega)$. Then, for every $\varphi\in C_{\rm c}(\Rt)$ and for every $\widetilde{\omega}\in\widetilde{\Omega}_{\dP}$, there holds
    \begin{equation}
        \notag
        \lim_{\e\to 0} \int_{\Rt} \vphi(x)g(T_{x/\e}\widetilde{\omega})\dx  = \Ev[g]\int_{\R^3}\varphi(x)\dx .
    \end{equation}
\end{prop}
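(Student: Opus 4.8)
The plan is to deduce the mean-value property directly from Definition \ref{def:typicaltrajectory} by a rescaling change of variables followed by a density argument. Fix $g\in C(\Omega)$ and $\widetilde{\omega}\in\widetilde{\Omega}_{\dP}$; since $\Omega$ is compact, $\|g\|_{L^\infty(\Omega)}=:M_g<\infty$. The substitution $x=\e y$ turns $\int_{\Rt}\varphi(x)\,g(T_{x/\e}\widetilde{\omega})\dx$ into $\e^3\int_{\Rt}\varphi(\e y)\,g(T_y\widetilde{\omega})\dy$, and writing $t:=1/\e\to\infty$ this equals $t^{-3}\int_{\Rt}\varphi(y/t)\,g(T_y\widetilde{\omega})\dy$.

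First I would treat the case $\varphi=\chi_A$ with $A\subset\Rt$ a bounded Borel set: then $t^{-3}\int_{\Rt}\chi_A(y/t)\,g(T_y\widetilde{\omega})\dy=t^{-3}\int_{tA}g(T_y\widetilde{\omega})\dy$, which by Definition \ref{def:typicaltrajectory} converges to $|A|\,\Ev[g]=\Ev[g]\int_{\Rt}\chi_A\dx$ (the case $|A|=0$ being trivial, as both sides vanish). By linearity of the integral, the claimed identity then holds for every $\varphi$ that is a finite $\R$-linear combination of indicators of bounded Borel sets.

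Next I would pass to a general $\varphi\in C_{\rm c}(\Rt)$ by approximating it uniformly by such simple functions, taking care that the approximation be uniform in $\e$. Fix a closed box $K\supset\supp\varphi$; given $\delta>0$, partition the (bounded) range of $\varphi$ into finitely many intervals $I_1,\dots,I_N$ of length $<\delta$, choose $c_j\in I_j$, and set $\psi:=\sum_{j=1}^N c_j\,\chi_{\varphi^{-1}(I_j)\cap K}$, so that $\psi$ is admissible for the previous step, $\supp\psi\subset K$, and $\|\varphi-\psi\|_{L^\infty(\Rt)}\le\delta$. Then $\bigl|\int_{\Rt}(\varphi-\psi)(x)\,g(T_{x/\e}\widetilde{\omega})\dx\bigr|\le\delta\,M_g\,|K|$ and $\bigl|\Ev[g]\int_{\Rt}(\varphi-\psi)(x)\dx\bigr|\le|\Ev[g]|\,\delta\,|K|$, both bounds being independent of $\e$; combining these with the previous step applied to $\psi$ and letting $\e\to 0$ gives $\limsup_{\e\to 0}\bigl|\int_{\Rt}\varphi(x)\,g(T_{x/\e}\widetilde{\omega})\dx-\Ev[g]\int_{\Rt}\varphi(x)\dx\bigr|\le(M_g+|\Ev[g]|)\,\delta\,|K|$, and the arbitrariness of $\delta>0$ concludes.

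There is no genuine obstacle here: the statement is a routine corollary of Definition \ref{def:typicaltrajectory}, and the only point requiring attention is the uniformity in $\e$ of the simple-function approximation — which is precisely why one approximates by splitting the \emph{range} of $\varphi$ over a fixed compact $K$ rather than refining its domain. Alternatively one could phrase the third step as the weak-$*$ convergence $g(T_{\cdot/\e}\widetilde{\omega})\rightharpoonup\Ev[g]$ in $L^\infty(\Rt)$ tested against $L^1$-functions, invoking the uniform bound $\|g(T_{\cdot/\e}\widetilde{\omega})\|_{L^\infty(\Rt)}\le M_g$ together with density of simple functions with bounded support, but the direct argument above is the shortest route.
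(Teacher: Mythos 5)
Your proof is correct and follows exactly the route the paper indicates for this statement (which it cites from \cite[Proposition 2.2.2]{AdBMN21} and describes as ``obtained by rescaling and approximating compactly supported continuous [functions] via simple functions''): the change of variables $x=\e y$ reduces the claim to Definition \ref{def:typicaltrajectory} for indicators of bounded Borel sets, and the uniform simple-function approximation over a fixed compact containing $\supp\varphi$ handles the general case. No gaps; this is essentially the same argument.
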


\begin{remark}[Typical trajectories and extension of the mean-value property]
\label{rem:mean-value property}
    While we will can often rely on the mean-value property as stated in Proposition \ref{prop:mean-value property}, we will mostly need a more general result due to the low regularity of the coefficients. 
    Let $\rho$ be a random variable, with $\rho\in L^1(\Omega)$.
    The measures $\dx$ and $\dPo$ in \eqref{Birkhoff} can be replaced with the {\it random stationary measure} $\m\mu_\omega=\rho(T_x\omega)\dx$ on $\Rt$  and the respective {\it Palm measure} $\m\f{\mu}(\omega)=\rho(\omega)\dPo$ on $\Omega$ (see \cite[Section 1 and Theorem 1.1]{ZP06} for the general definitions and the generalized ergodic theorem).
    Hence, $\tomega \in\Omega$ is  a $\rho\dP$-typical trajectory  if 
    \begin{equation*}
        \lim_{t\to\infty}\frac{1}{t^3|A|}\int_{tA}f(T_x\tomega)\m\mu_\tomega(x)
        = 
        \int_\Omega f(\omega)\m\f{\mu}(\omega),
    \end{equation*}
    for every bounded Borelian $A\subset\Rt$ with $|A|>0$ and for every $f\in C(\Omega)$. 
    We denote with $\wt{\Omega}_{\rho\dP}$ the set of $\rho\dP$-typical trajectories.
    Slightly adopting the arguments presented in \cite{AdBMN21}, we obtain that $\P(\wt{\Omega}_{\rho\dP})=1$ and an analogous version of the mean-value property holds in $\wt{\Omega}_{\rho\dP}$.
\end{remark}

We recall below the notion of stochastic two-scale convergence and its main properties.

\begin{definition}
\label{def:wstsconve}
Let $\widetilde{\omega}\in\wt{\Omega}_{\dP}$ be fixed, let $\{v_\e\}_{\e>0}$ be a family of elements of $L^2(D)$ and let $v\in L^2(D\times\Omega)$. We say that the subsequence $\{ v_{\e}\}$ weakly stochastically two-scale converges to $v$ if, for every $\psi\in C^\infty_{\rm c}(D)$ and every $b\in C(\Omega)$,
   \begin{equation}
   	\notag
   	\lim_{\e\to 0} \int_D v_{\e}(x)\psi(x)b(T_{x/\e}\widetilde{\omega})\dx =\int_D\int_\Omega v(x, \omega)\psi(x)b(\omega) \dPo\dx .
   \end{equation}
\end{definition}

We write 
    \begin{equation}
        \notag
        v_{\e}\in L^2(D)\wsts \,v\in L^2(D\times \Omega).
    \end{equation}
Note that the definition of two-scale convergence may depend on the choice of the typical trajectory $\widetilde{\omega}\in\wt{\Omega}_{\dP}$ and accordingly the limit may depend on $\widetilde{\omega}$. 
If not further specified, we refer to two-scale convergence with respect to the element $\tomega\in\widetilde{\Omega}$ in the definition of $(\F_\e)_{\e>0}$, see also Remark \ref{rem:typicaltrajectories}.\\
An important property of this convergence is the following compactness result.

\begin{thm} [{{\cite[Theorem 2.2.3]{AdBMN21}  }}]
\label{thm:L2cpt}
Let $\{ v_\e\}_{\e>0}$ be a bounded family in $L^2(D)$. 
Then, there exist a subsequence $\{\e_k\}_{k>0}$ going to zero and a function $v_0\in L^2(\Rt\times\Omega)$ such that $v_{\e_k}$ weakly two-scale-converges to $v_0$.
\end{thm}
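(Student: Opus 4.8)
The statement to prove is Theorem \ref{thm:L2cpt}, the stochastic two-scale compactness result for bounded families in $L^2(D)$. Let me sketch a proof.

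The plan is to use the functional-analytic duality structure underlying stochastic two-scale convergence, reducing the compactness claim to a standard weak-$*$ compactness argument in a separable Hilbert space. Here is the approach.

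First I would set up the bilinear pairing. For a fixed typical trajectory $\tomega \in \wt\Omega_\dP$, and for each $\e > 0$, associate to $v_\e \in L^2(D)$ the linear functional $L_\e$ acting on test functions $\Psi(x,\omega) = \psi(x) b(\omega)$ with $\psi \in C^\infty_{\rm c}(D)$, $b \in C(\Omega)$, by
\[
    L_\e(\Psi) := \int_D v_\e(x)\psi(x) b(T_{x/\e}\tomega)\dx.
\]
By Cauchy--Schwarz, $|L_\e(\Psi)| \leq \|v_\e\|_{L^2(D)} \left(\int_D |\psi(x)|^2 |b(T_{x/\e}\tomega)|^2\dx\right)^{1/2}$. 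The key point is to control the second factor: since $|b|^2 \in C(\Omega)$ and $\tomega \in \wt\Omega_\dP$, the mean-value property (Proposition \ref{prop:mean-value property}) gives
\[
    \lim_{\e\to 0}\int_D |\psi(x)|^2 |b(T_{x/\e}\tomega)|^2\dx = \Ev[|b|^2]\int_D |\psi(x)|^2\dx = \|\psi\, b\|_{L^2(D\times\Omega)}^2.
\]
Hence $\limsup_{\e\to 0}|L_\e(\Psi)| \leq \left(\sup_\e\|v_\e\|_{L^2(D)}\right)\|\Psi\|_{L^2(D\times\Omega)}$ for all test functions $\Psi$ of product form, and by linearity and density the same bound holds (uniformly in $\e$, at least for $\e$ small) on the linear span of such products.

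Next I would extract the limit. The linear span $\mathcal S$ of products $\psi(x)b(\omega)$ with $\psi \in C^\infty_{\rm c}(D)$ and $b \in C(\Omega)$ is dense in $L^2(D\times\Omega)$ (using density of $C^\infty_{\rm c}(D)$ in $L^2(D)$, density of $C(\Omega)$ in $L^2(\Omega)$, and the fact that $L^2(D\times\Omega) \cong L^2(D)\hat\otimes L^2(\Omega)$); moreover $\mathcal S$ contains a countable subset dense in $L^2(D\times\Omega)$ because both $C^\infty_{\rm c}(D)$ and $C(\Omega)$ are separable — here the assumption that $\Omega$ is a compact metric space is what guarantees separability of $C(\Omega)$. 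Along any sequence $\e \to 0$, for each fixed $\Psi$ in this countable dense set the scalars $L_\e(\Psi)$ are bounded, so by a diagonal argument there is a subsequence $\{\e_k\}$ along which $L_{\e_k}(\Psi)$ converges for every $\Psi$ in the countable dense set. By the uniform bound $|L_{\e_k}(\Psi)| \leq C\|\Psi\|_{L^2(D\times\Omega)}$ this extends to a bounded linear functional $L$ on all of $L^2(D\times\Omega)$, and by the Riesz representation theorem there is $v_0 \in L^2(D\times\Omega)$ (indeed $\|v_0\|_{L^2(D\times\Omega)} \leq \sup_\e\|v_\e\|_{L^2(D)}$, and after restricting we may view $v_0 \in L^2(D\times\Omega) \subset L^2(\Rt\times\Omega)$ by zero extension, matching the statement) with $L(\Psi) = \int_D\int_\Omega v_0(x,\omega)\Psi(x,\omega)\dPo\dx$. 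Unwinding, $L_{\e_k}(\psi b) \to \int_D\int_\Omega v_0 \psi b\,\dPo\dx$ for all $\psi \in C^\infty_{\rm c}(D)$, $b \in C(\Omega)$, which is exactly weak stochastic two-scale convergence $v_{\e_k} \wsts v_0$.

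The main obstacle — and the only genuinely nontrivial input — is the uniform-in-$\e$ continuity estimate for $L_\e$, i.e. controlling $\int_D |\psi|^2|b(T_{x/\e}\tomega)|^2\dx$ independently of $\e$. This is exactly where the restriction to a typical trajectory $\tomega \in \wt\Omega_\dP$ and the mean-value property (Proposition \ref{prop:mean-value property}, applied to $|b|^2 \in C(\Omega)$) enter; without it one has no handle on the oscillating weights $b(T_{x/\e}\tomega)$. A minor technical point is that Proposition \ref{prop:mean-value property} is stated for $\vphi \in C_{\rm c}(\Rt)$ while $|\psi|^2 \in C^\infty_{\rm c}(D)$ extends by zero to such a function, so this causes no difficulty. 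Everything else is routine separable-Hilbert-space soft analysis, so I would keep that part brief and refer, if desired, to the cited \cite[Theorem 2.2.3]{AdBMN21}.
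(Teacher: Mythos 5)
The paper does not prove this result but imports it verbatim from \cite[Theorem 2.2.3]{AdBMN21}, and your argument is precisely the standard proof given there: an asymptotic Cauchy--Schwarz bound on the pairing with oscillating test functions (via the mean-value property on typical trajectories), a diagonal extraction over a countable dense subset of the span of products $\psi(x)b(\omega)$, and Riesz representation in $L^2(D\times\Omega)$. The only point to tighten is the phrase ``by linearity'': for $\Psi=\sum_i\psi_i b_i$ the bound $\limsup_{\e\to 0}|L_\e(\Psi)|\le C\|\Psi\|_{L^2(D\times\Omega)}$ does not follow from summing the bounds for the individual products, but from applying Cauchy--Schwarz to the whole sum and then the mean-value property to the expanded square $\sum_{i,j}\psi_i\psi_j\,b_ib_j$, which is again a finite combination of admissible products.
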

We stress that, by Definition \ref{def:wstsconve}, any $\wt\omega$ might in principle give rise to a different two-scale limit. In other words, the specific $v_0$ identified in Theorem \ref{thm:L2cpt} will be dependent on the choice of $\wt\omega$ in Definition \ref{def:wstsconve}.

An explicit example for a class of weakly stochastically two-scale converging sequences is provided by a combination of a density argument and of the mean value property. 

\begin{example}\label{ex:Birkhoff2scale}
Let $\vphi\in L^2(\Rt)=\overline{C_{\rm c}(\Rt)}^{\|\cdot\|_{L^2}}$, $g\in C(\Omega)$.
Then due to Proposition \ref{prop:mean-value property} the sequence $\lbrace v_\e\rbrace_{\e>0}$ given by 
    $v_\e(x)=\vphi(x)g(T_{x/\e}\tomega)$ 
weakly two-scale converges to 
    $v\in L^2(D\times\Omega)$ with $v(x,\omega)=\vphi(x)g(\omega)$.
\end{example}

Before providing the stochastic two-scale version of the weak-strong convergence principle, we introduce the notion of strong two-scale convergence.

\begin{definition}
A sequence $\{v_{\e}\}_{\e>0}\in L^2(D)$ is said to strongly two-scale converge to a function $v_0\in L^2(D\times\Omega)$ if $v_{\e}$ weakly two-scale converges to $v_0$ and if   
   \begin{equation}
   	\notag
   	\lim_{\e\to 0} \|v_{\e}\|_{L^2(D)} = \|v_0\|_{L^2(D\times\Omega)}.
   \end{equation}
\end{definition}
We write that $v_{\e}\in L^2(D) \sts\, v_0\in L^2(D\times\Omega).$
\begin{prop}[Weak-strong convergence principle, {{\cite[Proposition 2.2.4]{AdBMN21} }}]
\label{prop:weak-strong conv}
Let $\{ v_\e\}_{\e>0}$ and $\{ u_\e\}_{\e>0}$ be two families of functions in $L^2(D)$. 
If $\{ v_\e\}_{\e>0}$ strongly two-scale converges to $v_0$ and $\{ u_\e\}_{\e>0}$ weakly two-scale converges to $u_0$, for every $\psi\in C^\infty_{\rm c}(D)$ and $b\in C(\Omega)$,
  \begin{equation}
  \notag
  \lim_{\e\to 0} \int_D u_{\e}(x)v_{\e}(x)\psi(x)b(T_{x/\e}\widetilde{\omega})\dx  =\int_D\int_\Omega u_0(x, \omega)v_0(x, \omega)\psi(x)b(\omega)\dPo\dx .
  \end{equation}
\end{prop}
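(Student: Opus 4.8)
The plan is to reduce to the case of ``elementary'' oscillating test functions and then use the \emph{strong} two-scale convergence of $\{v_\e\}$ to absorb the error. Fix $\delta>0$. Since the algebraic tensor product $C^\infty_{\rm c}(D)\otimes C(\Omega)$ is dense in $L^2(D\times\Omega)$, choose $\tilde v=\sum_{k=1}^{N}\varphi_k(x)g_k(\omega)$ with $\varphi_k\in C^\infty_{\rm c}(D)$, $g_k\in C(\Omega)$, such that $\|v_0-\tilde v\|_{L^2(D\times\Omega)}<\delta$, and set $\tilde v_\e(x):=\sum_{k=1}^{N}\varphi_k(x)g_k(T_{x/\e}\tomega)$. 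By Example \ref{ex:Birkhoff2scale} (equivalently, by the mean-value property, Proposition \ref{prop:mean-value property}) each summand weakly two-scale converges to $\varphi_k(x)g_k(\omega)$, so $\tilde v_\e$ weakly two-scale converges to $\tilde v$; moreover, expanding $\|\tilde v_\e\|_{L^2(D)}^2=\sum_{k,l}\int_D\varphi_k\varphi_l\,(g_kg_l)(T_{x/\e}\tomega)\dx$ and applying Proposition \ref{prop:mean-value property} to each term (note $\varphi_k\varphi_l\in C_{\rm c}(\Rt)$, $g_kg_l\in C(\Omega)$) gives $\|\tilde v_\e\|_{L^2(D)}\to\|\tilde v\|_{L^2(D\times\Omega)}$, i.e.\ $\tilde v_\e\sts\tilde v$.

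Next I would establish the identity $\|v_\e-\tilde v_\e\|_{L^2(D)}\to\|v_0-\tilde v\|_{L^2(D\times\Omega)}$. Writing $\|v_\e-\tilde v_\e\|_{L^2(D)}^2=\|v_\e\|_{L^2(D)}^2-2\langle v_\e,\tilde v_\e\rangle_{L^2(D)}+\|\tilde v_\e\|_{L^2(D)}^2$, the first term converges to $\|v_0\|_{L^2(D\times\Omega)}^2$ by strong two-scale convergence of $\{v_\e\}$, the last to $\|\tilde v\|_{L^2(D\times\Omega)}^2$ by the previous step, while the cross term $\langle v_\e,\tilde v_\e\rangle_{L^2(D)}=\sum_k\int_D v_\e\,\varphi_k\,g_k(T_{x/\e}\tomega)\dx$ converges to $\sum_k\int_D\int_\Omega v_0\varphi_kg_k\dPo\dx=\langle v_0,\tilde v\rangle_{L^2(D\times\Omega)}$ directly from the definition of weak two-scale convergence of $\{v_\e\}$ (each $\varphi_k\in C^\infty_{\rm c}(D)$, $g_k\in C(\Omega)$). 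Hence the claimed convergence of norms.

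Finally I would split
\[
\int_D u_\e v_\e\,\psi\, b(T_{x/\e}\tomega)\dx
=\int_D u_\e\,\tilde v_\e\,\psi\,b(T_{x/\e}\tomega)\dx
+\int_D u_\e\,(v_\e-\tilde v_\e)\,\psi\,b(T_{x/\e}\tomega)\dx .
\]
The first integral equals $\sum_k\int_D u_\e\,(\varphi_k\psi)\,(g_kb)(T_{x/\e}\tomega)\dx$; since $\varphi_k\psi\in C^\infty_{\rm c}(D)$ and $g_kb\in C(\Omega)$, the weak two-scale convergence of $\{u_\e\}$ yields convergence to $\sum_k\int_D\int_\Omega u_0(\varphi_k\psi)(g_kb)\dPo\dx=\int_D\int_\Omega u_0\tilde v\psi b\dPo\dx$. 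The second integral is bounded in modulus by $\|\psi\|_{L^\infty}\|b\|_{L^\infty}\,\|u_\e\|_{L^2(D)}\,\|v_\e-\tilde v_\e\|_{L^2(D)}$, whose $\limsup$ is $\le C\|\psi\|_{L^\infty}\|b\|_{L^\infty}\delta$ using the $L^2(D)$-boundedness of $\{u_\e\}$ (inherent to the weakly two-scale convergent family) and the identity from the previous step. Comparing with $\int_D\int_\Omega u_0v_0\psi b\dPo\dx=\int_D\int_\Omega u_0\tilde v\psi b\dPo\dx+\int_D\int_\Omega u_0(v_0-\tilde v)\psi b\dPo\dx$ and bounding the last term by $\|u_0\|_{L^2(D\times\Omega)}\|\psi\|_{L^\infty}\|b\|_{L^\infty}\delta$, we get that the $\limsup$ of the difference between $\int_D u_\e v_\e\psi\,b(T_{x/\e}\tomega)\dx$ and its candidate limit is $O(\delta)$; letting $\delta\to0$ concludes.

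The main obstacle is precisely the norm identity $\|v_\e-\tilde v_\e\|_{L^2(D)}\to\|v_0-\tilde v\|_{L^2(D\times\Omega)}$: this is where the \emph{strong} two-scale hypothesis on $\{v_\e\}$ (as opposed to mere weak convergence, for which only a $\liminf$-type inequality would hold) is indispensable, and it is the density of tensor-product functions together with the mean-value property (Proposition \ref{prop:mean-value property}) that makes the cross term computable.
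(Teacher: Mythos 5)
The paper does not prove this proposition at all: it is imported verbatim from \cite[Proposition 2.2.4]{AdBMN21}, so there is no internal argument to compare against. Your proof is the standard one for the weak--strong two-scale principle (density of the tensor product $C^\infty_{\rm c}(D)\otimes C(\Omega)$ in $L^2(D\times\Omega)$, strong two-scale convergence of the elementary approximants $\tilde v_\e$ via the mean-value property, the polarization identity $\|v_\e-\tilde v_\e\|_{L^2(D)}^2=\|v_\e\|^2-2\langle v_\e,\tilde v_\e\rangle+\|\tilde v_\e\|^2$ to quantify the error, and Cauchy--Schwarz to absorb it), and each step is carried out correctly; in particular you correctly isolate where the \emph{strong} hypothesis on $\{v_\e\}$ enters, namely in the convergence of $\|v_\e\|_{L^2(D)}$ within the polarization identity.

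One point deserves a remark: your final estimate uses $\sup_\e\|u_\e\|_{L^2(D)}<\infty$, which you assert is ``inherent to the weakly two-scale convergent family.'' With the paper's Definition \ref{def:wstsconve} this is not automatic --- the admissible test functions $\psi(x)b(T_{x/\e}\tomega)$ do not, for fixed $\e$, exhaust $L^2(D)$, so a Banach--Steinhaus argument does not apply and boundedness must be taken as an implicit standing hypothesis (as it is in \cite{AdBMN21}, where two-scale limits are only produced from bounded families via the compactness theorem). Stating this assumption explicitly would close the only gap; everything else is sound.
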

\begin{remark}[$L^2$-convergence vs. two-scale convergence]\label{rem:2sL2vsL2}
    As in the classical periodic case, on the one hand, strong $L^2$-convergence implies strong two-scale convergence. 
    Namely, if $u_\e \rightarrow u\in L^2(\Rt)$  strongly in $L^2(\Rt)$ as $\e\rightarrow 0$ for $\lbrace u_\e\rbrace\subset L^2(\Rt)$, then $u_\e \sts\, \wt{u}\in L^2(\Rt\times\Omega)$ with $\wt{u}(x,\omega)=u(x)$.\\
    On the other hand, weak two-scale convergence implies weak $L^2$-convergence.
    If $u_\e \wsts\, u\in L^2(\Rt\times\Omega)$, then $u_\e\rightharpoonup \Ev[u]$ weakly in $L^2(\Rt)$.
\end{remark}

\subsection{\texorpdfstring{\boldmath}{}The space\texorpdfstring{ ${\rm H^1(\Omega)}$}{H1}}
The goal of this subsection is to state a compactness result, similar to Theorem \ref{thm:L2cpt}, for sequences of uniformly bounded gradients.
The language to describe the emerging additional structure is provided via the spaces $H^1(\Omega)$, $\Ltpot(\Omega)$, and $\Ltsol(\Omega)$.
We briefly recall the definitions and main results.

First, we define a notion of derivatives on $\Omega$ that is compatible with the dynamical system $(T_x)_{x\in\Rt}$.

\begin{definition}
\label{def:derivates}
A function $u\in L^2(\Omega)$ is said to be differentiable at $\omega\in\Omega$ if  the limits
\begin{equation*}
    \lim_{\delta\rightarrow 0}\frac{u(T_{\delta e_i}\omega)-u(\omega)}{\delta}
    \eqqcolon (D_iu)(\omega)
\end{equation*}
exist for every $i\in\lbrace 1, 2, 3\rbrace$, where $(e_1,e_2, e_3)$ is the canonical basis of $\R^3$.
In this case, we write 
    $D_\omega u = (D_1 u,D_2 u, D_3 u)$,
    $\nablaomega u = (D_\omega u)^\intercal$,
and adapt the notation accordingly if $u$ is vector-valued.
\end{definition}

This allows us to define a space $C^1(\Omega)$ of continuously differentiable functions on the probability space $(\Omega, \Sigma, \P)$. 
Indeed, $C^1(\Omega)$ is the set of functions $u\in C(\Omega)$ that are differentiable, in the sense of Definition \ref{def:derivates}, at any $\omega\in\Omega$ and such that $\nablaomega u$ is continuous on $\Omega$. 
It turns out that $C^1(\Omega)$ is a dense subspace in $L^2(\Omega)$ (see \cite[Lemma 2.3.2]{AdBMN21}). 
Due to the ergodicity assumption, in particular via the mean-value property, we obtain an integration-by-parts result for these functions.

\begin{lemma} [{{\cite[Lemma 2.3.3]{AdBMN21}}}]
\label{lemma:stochintegrationparts}
Let $w\in C^1(\Omega)$. 
Then, for $i\in\{1, 2, 3\}$,  $\Ev[D_i w]=0$. 
In particular, for $u,v\in C^1(\Omega)$ there holds
   \begin{equation}
   \notag
   \int_\Omega (D_iu)v\dP+\int_\Omega u(D_iv)\dP=0.
   \end{equation} 
\end{lemma}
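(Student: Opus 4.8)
The plan is to prove first the special case $\Ev[D_i w]=0$ for a single $w\in C^1(\Omega)$, and then obtain the integration-by-parts identity by applying this special case to the product $uv$, once we know $uv\in C^1(\Omega)$ with a Leibniz rule.

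For the special case I would fix a typical trajectory $\tomega\in\wt{\Omega}_{\dP}$ and a test function $\varphi\in\Cic(\Rt)$ with $\int_{\Rt}\varphi\,\dx\neq 0$, and exploit the interplay between the derivative $D_i$ on $\Omega$ and the ordinary partial derivative in $x$ of the stationary field $x\mapsto w(T_{x/\e}\tomega)$. Using the group law \ref{asT_SumGroupAction}, for every $\e>0$ the map $x\mapsto w(T_{x/\e}\tomega)$ is of class $C^1$ with $\partial_{x_i}\big[w(T_{x/\e}\tomega)\big]=\tfrac{1}{\e}(D_iw)(T_{x/\e}\tomega)$, the right-hand side being continuous in $x$ since $D_iw\in C(\Omega)$ and $(x,\omega)\mapsto T_x\omega$ is continuous. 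An ordinary integration by parts in $\Rt$ then gives
\[
    \int_{\Rt}\varphi(x)\,(D_iw)(T_{x/\e}\tomega)\dx
    = -\e\int_{\Rt}\partial_{x_i}\varphi(x)\,w(T_{x/\e}\tomega)\dx ,
\]
and since $w$ is bounded on the compact space $\Omega$, the right-hand side is $O(\e)$. Letting $\e\to 0$ and invoking the mean-value property (Proposition \ref{prop:mean-value property}) applied to $g=D_iw\in C(\Omega)$, the left-hand side converges to $\Ev[D_iw]\int_{\Rt}\varphi\,\dx$; hence $\Ev[D_iw]=0$. (Alternatively, one may bypass the mean-value property: by \ref{asT_PInvariance} one has $\int_\Omega\tfrac{w(T_{\delta e_i}\omega)-w(\omega)}{\delta}\dP=0$ for every $\delta\neq 0$, and rewriting the difference quotient as $\tfrac{1}{\delta}\int_0^\delta (D_iw)(T_{te_i}\omega)\,dt$ — legitimate since $t\mapsto w(T_{te_i}\omega)$ is $C^1$ by \ref{asT_SumGroupAction} — yields a bound uniform in $\omega$ and $\delta$, so dominated convergence applies.)

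For the identity I would check that $u,v\in C^1(\Omega)$ forces $uv\in C^1(\Omega)$ together with $D_i(uv)=(D_iu)v+u(D_iv)$: the difference quotient of $uv$ at $\omega$ splits as $\tfrac{u(T_{\delta e_i}\omega)-u(\omega)}{\delta}\,v(T_{\delta e_i}\omega)+u(\omega)\,\tfrac{v(T_{\delta e_i}\omega)-v(\omega)}{\delta}$, and each term converges pointwise as $\delta\to 0$ by continuity of $u,v$ along the flow $\delta\mapsto T_{\delta e_i}\omega$; the limiting expression is a sum of products of continuous functions, hence continuous on $\Omega$, and $uv\in C(\Omega)$. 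Applying the special case to $w=uv$ then gives $\int_\Omega(D_iu)v\dP+\int_\Omega u(D_iv)\dP=\Ev[D_i(uv)]=0$.

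The only genuinely delicate point is the passage to the limit under the integral sign (equivalently, the uniform control of the difference quotients); this is handled either through the mean-value property or through the integral-remainder representation of the difference quotient, both resting only on the compactness of $\Omega$ and the continuity of $(x,\omega)\mapsto T_x\omega$. Everything else is routine manipulation of difference quotients, so I do not expect any serious obstacle beyond bookkeeping.
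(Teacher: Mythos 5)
Your proposal is correct. The paper itself does not reprove this lemma but cites \cite[Lemma 2.3.3]{AdBMN21}, and its surrounding commentary (``due to the ergodicity assumption, in particular via the mean-value property'') indicates that the intended argument is exactly your primary route: realize $(D_iw)(T_{x/\e}\tomega)$ as $\e\,\partial_{x_i}\bigl[w(T_{x/\e}\tomega)\bigr]$, integrate by parts against $\varphi\in\Cic(\Rt)$, observe the result is $O(\e)$, and identify the limit of the left-hand side as $\Ev[D_iw]\int_{\Rt}\varphi\dx$ via Proposition \ref{prop:mean-value property}. All the ingredients you invoke check out: the chain-rule identity follows from \ref{asT_SumGroupAction}, the map $x\mapsto w(T_{x/\e}\tomega)$ is genuinely $C^1$ because $D_iw\in C(\Omega)$ and $(x,\omega)\mapsto T_x\omega$ is continuous, and the Leibniz rule for $uv$ is verified correctly (the only point needing care, continuity of $\delta\mapsto v(T_{\delta e_i}\omega)$, is supplied by the same continuity assumption). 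Your parenthetical alternative is worth highlighting: it deduces $\int_\Omega\frac{w(T_{\delta e_i}\omega)-w(\omega)}{\delta}\dPo=0$ directly from the measure invariance \ref{asT_PInvariance} and passes to the limit via the uniform bound $\sup_\Omega|D_iw|$ coming from the integral representation of the difference quotient. That route uses only stationarity, not ergodicity, so it is both more elementary and slightly more general than the mean-value-property argument; the only thing ergodicity buys in the first route is the existence of a typical trajectory to test against, which is not actually needed for the conclusion.
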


This leads to the definition of weak derivatives and thus to a Sobolev-type space $H^1(\Omega)$ on the probability space $(\Omega, \Sigma, \P)$.

\begin{definition}\label{def:weakderivative}
Let $u\in L^2(\Omega)$, $i\in\lbrace 1, 2, 3\rbrace$ and let $w_i\in L^2(\Omega)$. We say that $u$ admits $w_i$ as a (unique) weak derivative in the $i$-th direction  if 
\begin{equation*}
    -\int_\Omega u D_i v\dP = \int_\Omega w_iv\dP\,
    \quad\text{for all }v\in C^1(\Omega).
\end{equation*}
In this case, we denote by $D_iu=w_i$ the weak derivative of $u$ in the $i$-th direction.
\end{definition}
Note that both definitions of $D_i$ coincide for $C^1$-random variable. 
The space $H^1(\Omega)$ is the space of all functions $u\in L^2(\Omega)$ that admit weak derivatives in $L^2(\Omega)$ in all directions and we denote $D_\omega u= (D_1 u, D_2 u, D_3 u)$, $\nablaomega u = (D_\omega u)^\intercal$.
Clearly, $H^1(\Omega)$ is a vector space endowed with the inner product  
\begin{equation*}
    (u,v)_{H^1(\Omega)}\coloneqq (u,v)_{L^2(\Omega)} + (\nablaomega u, \nablaomega v)_{L^2(\Omega;\Rt)}.
\end{equation*}
\begin{prop} [{{\cite[Proposition 2.3.4]{AdBMN21}}}]
    The space $H^1(\Omega)$ is a separable Hilbert space and the subspace $C^1(\Omega)$ is dense in it. 
\end{prop}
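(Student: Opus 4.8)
The plan is to prove the three assertions in turn; completeness and separability are routine, and the density of $C^1(\Omega)$ is the only part that genuinely uses the dynamical system, so I treat it last and in most detail. \emph{Completeness.} The bilinear form $(u,v)\mapsto (u,v)_{L^2(\Omega)}+(\nablaomega u,\nablaomega v)_{L^2(\Omega;\Rt)}$ is visibly an inner product, so I would only verify completeness. If $\{u_n\}$ is Cauchy in $H^1(\Omega)$, then $\{u_n\}$ and each $\{D_iu_n\}$, $i=1,2,3$, are Cauchy in $L^2(\Omega)$, hence converge there to some $u$ and $w_i$. For every $v\in C^1(\Omega)$ one has $D_iv\in L^2(\Omega)$ (since $\nablaomega v$ is continuous on the compact space $\Omega$), so letting $n\to\infty$ in $-\int_\Omega u_nD_iv\dP=\int_\Omega(D_iu_n)v\dP$ (Definition \ref{def:weakderivative}) gives $-\int_\Omega uD_iv\dP=\int_\Omega w_iv\dP$ for all $v\in C^1(\Omega)$; thus $u\in H^1(\Omega)$ with $D_iu=w_i$, and $u_n\to u$ in $H^1(\Omega)$.

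\emph{Separability.} The map $u\mapsto(u,\nablaomega u)$ embeds $H^1(\Omega)$ isometrically as a subspace of $L^2(\Omega)\times L^2(\Omega;\Rt)$. Since $\Omega$ is a compact metric space, $C(\Omega)$ is separable and dense in $L^2(\Omega)$, hence $L^2(\Omega)$ and the finite product $L^2(\Omega)\times L^2(\Omega;\Rt)$ are separable; a subset of a separable metric space is separable, so $H^1(\Omega)$ is separable.

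\emph{Density of $C^1(\Omega)$.} Here I would mollify along the flow. First, $(\tau_yv)(\omega):=v(T_y\omega)$ defines a strongly continuous group of isometries on $L^2(\Omega)$: isometry is \ref{asT_PInvariance}, and strong continuity at $y=0$ follows for $v\in C(\Omega)$ from uniform continuity of $v$ and of $y\mapsto T_y$ on the compact $\Omega$, then for general $v\in L^2(\Omega)$ by density. Fixing an even mollifier $\{\rho_\delta\}\subset\Cic(\Rt)$, set for $v\in L^2(\Omega)$ the function $v_\delta(\omega):=\int_{\Rt}\rho_\delta(y)v(T_y\omega)\m y$, which agrees a.e.\ with the Bochner integral $\int\rho_\delta(y)\tau_yv\m y$ in $L^2(\Omega)$; then $\|v_\delta\|_{L^2(\Omega)}\le\|v\|_{L^2(\Omega)}$ by Jensen, and $v_\delta\to v$ in $L^2(\Omega)$ as $\delta\to0$. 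The key step is to show that for $u\in H^1(\Omega)$ the trajectory $x\mapsto u(T_x\omega)$ lies in $L^2_{\loc}(\Rt)$ for $\P$-a.e.\ $\omega$ (by \ref{asT_PInvariance} and Fubini) and has there the Euclidean weak gradient $x\mapsto(\nablaomega u)(T_x\omega)$; this I would obtain by testing the defining identity of $D_iu$ against products $\varphi(x)b(\omega)$ with $\varphi\in\Cic(\Rt)$, $b\in C^1(\Omega)$, using Fubini, \ref{asT_PInvariance}, Definition \ref{def:weakderivative}, and the density of $C^1(\Omega)$ in $L^2(\Omega)$ from \cite[Lemma 2.3.2]{AdBMN21}. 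Consequently classical mollification commutes with the weak gradient, $\nablaomega u_\delta=(\nablaomega u)_\delta$, so $u_\delta\to u$ in $H^1(\Omega)$ as $\delta\to0$. Finally, fixing $\delta$ and choosing $\widetilde u^{(k)}\in C(\Omega)$ with $\widetilde u^{(k)}\to u$ in $L^2(\Omega)$, one has $\widetilde u^{(k)}_\delta\in C^1(\Omega)$ (because $(y,\omega)\mapsto\widetilde u^{(k)}(T_y\omega)$ is uniformly continuous on $\overline{B_\delta}\times\Omega$ and $D_i\widetilde u^{(k)}_\delta(\omega)=-\int\partial_i\rho_\delta(y)\widetilde u^{(k)}(T_y\omega)\m y$ is likewise continuous), while $\|\widetilde u^{(k)}_\delta-u_\delta\|_{H^1(\Omega)}\le C_\delta\|\widetilde u^{(k)}-u\|_{L^2(\Omega)}\to0$; hence $u_\delta\in\overline{C^1(\Omega)}^{H^1(\Omega)}$ for every $\delta$, and $u\in\overline{C^1(\Omega)}^{H^1(\Omega)}$.

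I expect the only delicate point to be the identification of the weak gradient along trajectories — equivalently, the fact that the mollification of a generic $u\in H^1(\Omega)$, which need not itself be continuous, can nevertheless be approached in $H^1(\Omega)$-norm by genuinely $C^1(\Omega)$ functions (handled above by the two-step limit: first $\delta\to0$, then approximating $u_\delta$ through mollified continuous functions). All remaining ingredients are standard functional analysis.
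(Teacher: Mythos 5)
The paper does not actually prove this proposition --- it is imported verbatim from \cite[Proposition 2.3.4]{AdBMN21} --- so there is no in-text argument to compare against; your proof is correct and follows what is, to my knowledge, essentially the route of the cited reference: completeness and separability via the isometric embedding $u\mapsto(u,\nabla_\omega u)$ into $L^2(\Omega)\times L^2(\Omega;\R^3)$, and density of $C^1(\Omega)$ by mollification along the group action, with your two-step limit correctly handling the fact that $u_\delta$ itself need not be continuous. One streamlining worth noting: the commutation $D_i(u_\delta)=(D_i u)_\delta$ (and the bound $\|D_i v_\delta\|_{L^2}\le\|\partial_i\rho_\delta\|_{L^1}\|v\|_{L^2}$ for arbitrary $v\in L^2(\Omega)$, which your estimate on $\tilde u^{(k)}_\delta-u_\delta$ implicitly uses) can be obtained purely by duality, from $D_i\circ\tau_y=\tau_y\circ D_i$ on $C^1(\Omega)$ together with the measure invariance \ref{asT_PInvariance}, without passing through a.e.-$\omega$ regularity of trajectories.
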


\begin{remark}
    In \cite[Section 2]{ZP06}, the authors provide another definition of the Sobolev space $H^1(\Omega)$, which is similar to the one of  $H^1(\R, \mu)$  (see, e.g., \cite{BBS97, BF01, Z00}). For completeness, we recall the definition of $H^1(\Omega, \mu)$ below. Let $\mu$ be a Borel measure on $\Omega$. A function $u\in L^2(\Omega, \mu)$ belongs to $H^1(\Omega, \mu)$ and $z\in (L^2(\Omega, \mu))^3$ is the gradient of $u$ if there exists a sequence $\{u_k\}_{k>0}$ in $C^1(\Omega)$ such that $u_k\to u$ in $L^2(\Omega, \mu)$ and $D_i u_k\to z_i$ in $L^2(\Omega, \mu)$, for $i=1, 2, 3$, as $k\to \infty$.  The equivalence of the two definitions is shown in \cite[Section 4]{ZP06}.
    
\end{remark}

The ergodicity of the dynamical system implies that, if the weak derivatives of a Sobolev map vanish, then the original function is constant.

\begin{prop} [{{\cite[Proposition 2.3.6]{AdBMN21}}}]
\label{prop:uconstant}
For any $u\in H^1(\Omega)$, we have  
    \begin{equation}
    \notag
    \nablaomega u\equiv 0\quad\Longrightarrow\quad \mbox{u is constant $\P$-a.s.}
    \end{equation}
This property is also referred to as the measure $\P$ being ergodic with respect to translations.
\end{prop}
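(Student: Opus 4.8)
The plan is to translate the vanishing of the abstract gradient $\nablaomega u$ into invariance of $u$ under the entire dynamical system $(T_x)_{x\in\Rt}$, and then to quote the ergodicity hypothesis \ref{asT_Ergodicity}. To this end I would introduce, for $x\in\Rt$, the Koopman operator $U(x)\colon L^2(\Omega)\to L^2(\Omega)$, $(U(x)f)(\omega):=f(T_x\omega)$; by \ref{asT_PInvariance} each $U(x)$ is unitary and by \ref{asT_SumGroupAction} the family $\{U(x)\}_{x\in\Rt}$ is a group. I would first record two elementary facts, both consequences of $T_{x+\delta e_i}=T_{\delta e_i}\circ T_x$, of Definition \ref{def:derivates}, and of the compactness of $\Omega$ together with the continuity of $(x,\omega)\mapsto T_x\omega$: (i) $U(x)$ maps $C^1(\Omega)$ into itself, with $D_i(U(x)v)=U(x)(D_iv)$ for $v\in C^1(\Omega)$; and (ii) for $v\in C^1(\Omega)$ the map $t\mapsto U(te_i)v$ is differentiable from $\R$ into $C(\Omega)$ with derivative $t\mapsto U(te_i)(D_iv)$, which one sees by writing the difference quotient as an average of $(D_iv)\circ T_{se_i}$ over a shrinking interval and using uniform continuity of $D_iv$ on the compact set $\Omega$.

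The core of the argument is the claim that, if $u\in H^1(\Omega)$ and $\nablaomega u\equiv0$, then $U(te_i)u=u$ in $L^2(\Omega)$ for every $t\in\R$ and $i\in\{1,2,3\}$. Fix $v\in C^1(\Omega)$ and set $g(t):=(U(te_i)u,v)_{L^2(\Omega)}=(u,U(-te_i)v)_{L^2(\Omega)}$, the second equality by \ref{asT_PInvariance}. Since $u\in L^1(\Omega)$ and, by (ii), $t\mapsto U(-te_i)v$ is differentiable into $C(\Omega)$ with uniformly bounded difference quotients, a dominated-convergence argument shows that $g$ is differentiable with $g'(t)=-(u,U(-te_i)(D_iv))_{L^2(\Omega)}$. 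Now $U(-te_i)v\in C^1(\Omega)$ by (i), with $D_i(U(-te_i)v)=U(-te_i)(D_iv)$; hence, testing the defining relation of the weak derivative (Definition \ref{def:weakderivative}) with $\psi:=U(-te_i)v$ and using $D_iu=0$, one obtains $(u,U(-te_i)(D_iv))_{L^2(\Omega)}=-(D_iu,U(-te_i)v)_{L^2(\Omega)}=0$, so $g'\equiv0$. Therefore $(U(te_i)u,v)_{L^2(\Omega)}=(u,v)_{L^2(\Omega)}$ for all $v\in C^1(\Omega)$, and the density of $C^1(\Omega)$ in $L^2(\Omega)$ forces $U(te_i)u=u$.

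Composing the three one-parameter groups, $U(x)u=U(x_1e_1)U(x_2e_2)U(x_3e_3)u=u$ in $L^2(\Omega)$ for every $x=(x_1,x_2,x_3)\in\Rt$, i.e.\ $u\circ T_x=u$ $\P$-a.s.\ for each $x$ (with an a priori $x$-dependent exceptional set). It then remains to upgrade this to a genuine invariance, suitable for ergodicity. Since $(x,\omega)\mapsto u(T_x\omega)$ is jointly measurable by \ref{asT_measurability}, Tonelli's theorem applied to $\{(x,\omega):u(T_x\omega)\neq u(\omega)\}$ shows that, for $\P$-a.e.\ $\omega$, $u(T_y\omega)=u(\omega)$ for Lebesgue-a.e.\ $y$. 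Fixing a strictly positive $\phi\in L^1(\Rt)$ and setting, for $r\in\R$, $\hat A_r:=\{\omega\in\Omega:\int_{\Rt}\mathbf{1}_{\{u(T_y\omega)>r\}}\,\phi(y)\,dy=0\}$, one checks that $\hat A_r$ is measurable, differs from $\{u\le r\}$ by a $\P$-null set, and satisfies $T_x\hat A_r=\hat A_r$ \emph{exactly} for every $x\in\Rt$, since translating $\omega$ merely translates the integration variable $y$ over all of $\Rt$. Hence $\P(\hat A_r)\in\{0,1\}$ by \ref{asT_Ergodicity}, so $\P(u\le r)\in\{0,1\}$ for every $r\in\R$; as $r\mapsto\P(u\le r)$ is non-decreasing, right-continuous and tends to $0$ as $r\to-\infty$ and to $1$ as $r\to+\infty$, it jumps from $0$ to $1$ at a single value $r_0$, whence $u=r_0$ $\P$-a.s.\ (the vector-valued case follows componentwise). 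Alternatively, once $U(x)u=u$ for all $x$ has been established, one may invoke directly the functional reformulation of ergodicity recalled in the remark following \ref{asT_Ergodicity}.

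I expect the main obstacle to be the second paragraph: beyond making the differentiation of $g$ rigorous, the genuinely delicate point is the justification that the defining relation of Definition \ref{def:weakderivative} may be tested against the \emph{translated} function $U(-te_i)v$ instead of $v$ --- which rests precisely on fact (i), i.e.\ that $U(-te_i)v$ is again in $C^1(\Omega)$ and that $D_i$ commutes with $U(-te_i)$. By contrast, the passage from $L^2$-invariance to $\P$-a.s.\ constancy in the last paragraph is routine once joint measurability and Tonelli's theorem are available.
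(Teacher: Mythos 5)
Your proof is correct. The paper itself offers no argument for this proposition --- it is quoted verbatim from \cite[Proposition 2.3.6]{AdBMN21} --- and your route (show that $\nablaomega u\equiv 0$ forces invariance of $u$ under the unitary translation group $U(x)$, by differentiating $t\mapsto (u,U(-te_i)v)_{L^2}$ against $C^1(\Omega)$ test functions and using that $D_i$ commutes with $U(x)$, then invoke ergodicity) is exactly the standard one used in that reference. Your extra care in upgrading ``$u\circ T_x=u$ $\P$-a.s.\ for each fixed $x$'' to a strictly $T$-invariant set via Tonelli and the sets $\hat A_r$ is the one genuinely delicate quantifier issue, and you handle it correctly.
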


We proceed by introducing the spaces of gradients and divergence-free functions in $L^2(\Omega; \hspace{0.03cm}\Rt)$, which are used to define the homogenization correctors.  

\begin{definition}\label{def:LtpotLtsol}
We denote by $L^2_\pot(\Omega)$ the set of all functions of $L^2(\Omega;\Rt)$ which can be viewed as limits of a sequence of gradients. In other words,
  \begin{equation}
  \notag
  L^2_\pot(\Omega) \coloneqq
  \overline{\{\nablaomega u \hspace{0.03cm}:\hspace{0.03cm} u\in C^1(\Omega)) \}}^{\|\cdot\|_{L^2(\Omega;\hspace{0.03cm} \R^{3})}}.
  \end{equation}
We denote by $L^2_\sol(\Omega)$ the orthogonal complement of $L^2_\pot(\Omega)$.
\end{definition}

Finally, we state the $H^1$-extension of the $L^2$-compactness result from Theorem \ref{thm:L2cpt}. 

\begin{thm} [{{\cite[Theorem 2.4.2]{AdBMN21}}}]
\label{thm: compactnessH1}
Let $\{v_{\e}\}_{\e>0}$ be a bounded sequence in $H^1(D)$. Then, there exists $v_0$ in $H^1(D)$ and $\xi\in L^2(D, L^2_\pot (\Omega))$ such that, up to the extraction of a non-relabelled subsequence, there holds 
   \begin{align}
   	\notag
   	&v_\e \sts\, v_0(x) \quad\mbox{strongly in }L^2(D\times \Omega;\Rt),\notag\\
   	&\nabla v_\e \wsts\, \nabla v_0(x)+ \xi(x,\omega) \quad \mbox{ weakly in } L^2(D\times\Omega;\Rtt).\notag
   \end{align}
Moreover, if $v_\e\in H^1_0(\Omega)$ for every $\e>0$, then the limit $v_0$ is in $H^1_0(\Omega)$.
\end{thm}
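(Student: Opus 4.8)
Since the statement is the $H^1$-counterpart of the $L^2$ two-scale compactness result (Theorem~\ref{thm:L2cpt}), the plan is to deduce it from the latter by adding an integration-by-parts argument against oscillating \emph{solenoidal} test fields. I would carry out the reasoning for scalar $v_\e$; the $\Rt$-valued case follows componentwise, the corrector $\xi(x,\omega)$ then being matrix-valued and understood as in Remark~\ref{rk:correctors}. First I would exploit the $H^1(D)$-bound: $\{v_\e\}$ is bounded in $L^2(D)$ and $\{\nabla v_\e\}$ in $L^2(D;\Rt)$, so by the Rellich--Kondrachov theorem a non-relabelled subsequence satisfies $v_\e\to v_0$ strongly in $L^2(D)$ and $v_\e\rightharpoonup v_0$ weakly in $H^1(D)$ for some $v_0\in H^1(D)$; if in addition $v_\e\in H^1_0(D)$ for every $\e$ then $v_0\in H^1_0(D)$, since $H^1_0(D)$ is weakly closed. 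By Remark~\ref{rem:2sL2vsL2}, strong $L^2$-convergence upgrades to strong stochastic two-scale convergence $v_\e\sts v_0$ with $\omega$-independent limit $v_0(x)$; applying Theorem~\ref{thm:L2cpt} to the bounded family $\{\nabla v_\e\}$ (componentwise) and extracting a further subsequence yields $\chi\in L^2(D\times\Omega;\Rt)$ with $\nabla v_\e\wsts\chi$. Since, again by Remark~\ref{rem:2sL2vsL2}, weak two-scale convergence implies weak $L^2$-convergence to the expectation, we have $\nabla v_\e\rightharpoonup\Ev[\chi]$ in $L^2(D)$, which combined with $\nabla v_\e\rightharpoonup\nabla v_0$ gives $\Ev[\chi]=\nabla v_0$; thus $\xi:=\chi-\nabla v_0$ satisfies $\Ev[\xi(x,\cdot)]=0$ for a.e.\ $x$.

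To identify the $\omega$-dependence of $\xi$, I would test $\nabla v_\e$ against oscillating solenoidal fields. Fix $\psi\in\Cic(D)$ and $b\in C^1(\Omega;\Rt)$ with $\nablaomega\cdot b=\sum_i D_i b_i=0$ $\P$-almost surely, and set $\Psi_\e(x):=\psi(x)\,b(T_{x/\e}\tomega)$, which is compactly supported in $D$. Integrating by parts in $x$ and using $\nablaomega\cdot b=0$ to annihilate the $\e^{-1}$-term gives
\begin{equation*}
    \int_D \nabla v_\e(x)\cdot\Psi_\e(x)\dx=-\int_D v_\e(x)\,\nabla\psi(x)\cdot b(T_{x/\e}\tomega)\dx .
\end{equation*}
The left-hand side converges, by the definition of $\wsts$, to $\int_D\int_\Omega\chi(x,\omega)\cdot\psi(x)b(\omega)\dP\dx$; on the right-hand side $\nabla\psi\cdot b(T_{x/\e}\tomega)\wsts\nabla\psi\cdot b(\omega)$ by Example~\ref{ex:Birkhoff2scale} while $v_\e\to v_0$ strongly in $L^2(D)$, so (by the weak--strong principle, Proposition~\ref{prop:weak-strong conv}, or directly from strong$\times$weak $L^2$-convergence) the right-hand side converges to $-\int_D v_0(x)\,\nabla\psi(x)\cdot\Ev[b]\dx=\int_D\int_\Omega\nabla v_0(x)\cdot\psi(x)b(\omega)\dP\dx$, after one integration by parts back in $x$ and the identity $\int_\Omega b\,\dP=\Ev[b]$. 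Letting $\psi$ vary, for a.e.\ $x\in D$ the vector $\xi(x,\cdot)$ is orthogonal in $L^2(\Omega;\Rt)$ to every $b\in C^1(\Omega;\Rt)$ with $\nablaomega\cdot b=0$; such $b$ indeed lie in $\Ltsol(\Omega)$, by the integration-by-parts formula of Lemma~\ref{lemma:stochintegrationparts}.

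It remains to upgrade ``$\xi(x,\cdot)\perp C^1$ solenoidal fields'' to ``$\xi(x,\cdot)\in\Ltpot(\Omega)$''. By the orthogonal splitting $L^2(\Omega;\Rt)=\Ltpot(\Omega)\oplus\Ltsol(\Omega)$ (automatic, $\Ltpot(\Omega)$ being closed by definition), it suffices to know that $\Ltsol(\Omega)$ is the $L^2$-closure of the $C^1$ solenoidal fields: then $\xi(x,\cdot)$, being orthogonal to a dense subset of $\Ltsol(\Omega)$, lies in $\Ltsol(\Omega)^\perp=\Ltpot(\Omega)$ for a.e.\ $x$; and since $\xi\in L^2(D\times\Omega;\Rt)\cong L^2(D;L^2(\Omega;\Rt))$ takes values a.e.\ in the closed subspace $\Ltpot(\Omega)$, we obtain $\xi\in L^2(D;\Ltpot(\Omega))$ and hence $\nabla v_\e\wsts\nabla v_0+\xi$. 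This density (a Weyl-type decomposition of $L^2(\Omega;\Rt)$) is the one nonroutine ingredient and the main obstacle: I would establish it by mollification along the flow, $\sigma_\delta(\omega):=\int_{\Rt}\rho_\delta(y)\sigma(T_y\omega)\dy$, which preserves $\Ltsol(\Omega)$ (using $T$-invariance of $\P$ together with Lemma~\ref{lemma:stochintegrationparts}), regularizes $\sigma$, and converges to it in $L^2(\Omega;\Rt)$ --- this is exactly where the standing assumptions that $\Omega$ is a compact metric space and that $(x,\omega)\mapsto T_x\omega$ is continuous are used. Everything else is a direct combination of Theorem~\ref{thm:L2cpt}, Remark~\ref{rem:2sL2vsL2}, Example~\ref{ex:Birkhoff2scale}, Lemma~\ref{lemma:stochintegrationparts} and Proposition~\ref{prop:weak-strong conv}, together with the bookkeeping of the two successive subsequence extractions.
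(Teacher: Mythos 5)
First, note that the paper does not actually prove this statement: it is quoted verbatim from \cite[Theorem 2.4.2]{AdBMN21}. The closest in-paper analogue is the proof of Proposition \ref{prop:cptBL} (the Beppo--Levi version), and your strategy coincides with it almost step by step: extract $v_0$ by Rellich and weak closedness of $H^1_0$, identify $\Ev[\chi]=\nabla v_0$, then test $\nabla v_\e$ against oscillating solenoidal fields and conclude that $\xi(x,\cdot)$ is orthogonal to them. Everything up to and including that orthogonality relation is correct (modulo the routine point that one needs a countable family of test pairs $(\psi,b)$ to pass from the integrated identity to ``for a.e.\ $x$, for all $b$'').

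The genuine gap sits exactly where you place it, but your proposed fix does not close it. You need that the $C^1(\Omega;\Rt)$ solenoidal fields are dense in $\Ltsol(\Omega)=\Ltpot(\Omega)^\perp$ (the stochastic Weyl decomposition), and you propose to obtain this by mollifying $\sigma\in\Ltsol(\Omega)$ along the flow, $\sigma_\delta(\omega)=\int_{\Rt}\rho_\delta(y)\sigma(T_y\omega)\,\mathrm{d}y$. This operation does preserve $\Ltsol(\Omega)$, converges to $\sigma$ in $L^2(\Omega;\Rt)$, and gives $\sigma_\delta$ derivatives along the flow with $\nablaomega\cdot\sigma_\delta=0$; but it does \emph{not} produce an element of $C^1(\Omega;\Rt)$ in the sense of Definition \ref{def:derivates}, because averaging an $L^2$ function over a three-dimensional orbit yields no continuity in directions transverse to the orbit, whereas $C^1(\Omega)$ requires genuine continuity on the compact metric space $\Omega$. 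This matters: your orthogonality relation was established only for $b\in C(\Omega;\Rt)$ --- continuity is precisely what makes $b$ an admissible test function for quenched two-scale convergence along the fixed typical trajectory $\tomega$ --- so you cannot feed $\sigma_\delta$ back into it, and the argument becomes circular. Attempting to restore continuity by first approximating $\sigma$ by continuous fields also fails, since neither the orthogonal projection onto $\Ltsol(\Omega)$ nor the mollification of a non-solenoidal continuous field returns you to the continuous solenoidal class. So this density statement is a genuine lemma that must be derived from the standing topological assumptions on $(\Omega,T)$; it is supplied in the cited reference, and, to be fair, the paper's own proof of Proposition \ref{prop:cptBL} waves at the very same point with the phrase ``by the density of $C(\Omega)$ in $L^2(\Omega)$,'' which is equally insufficient as written. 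In short: right architecture, correctly identified crux, but the crux is not actually established by your sketch.
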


\subsection{The correctors $\Theta_a$, $\Theta_\kappa$, and $\Theta_M$}\label{subs:correctors}
We generalize the spaces introduced in Definition \ref{def:LtpotLtsol} to vector-valued functions. 
The space $L^2_{\pot}(\Omega; \R^3)$ is defined as the closure of the set $\{ \nablaomega u\hspace{0.03cm} : \hspace{0.03cm} u\in C^1(\Omega; \Rt)\}$ in $L^2(\Omega; \R^{3\times 3})$.
Analogously to the scalar case, we set $L^2_{\sol}(\Omega; \Rt) \coloneqq (L^2_{\pot}(\Omega; \R^3))^{\perp}$ with respect to the $L^2(\Omega;\Rtt)$ scalar product 
    \begin{equation*}
        \left(A,B\right)_{L^2(\Omega;\Rtt)} 
        =
        \int_\Omega A(\omega):B(\omega)\dPo .
    \end{equation*} 
\begin{lemma}[Existence and uniqueness of the homogenization correctors]\label{lem:correctors}
    There exist unique solutions $\Theta_a,\Theta_\kappa,\Theta_M\in\Ltpot(\Omega;\Rt)$ to the problems
    \begin{align}
        \left(a\Theta_a + a\Id\right)           \in\Ltsol(\Omega;\Rt),
        \label{eq:correctors:Thetaa}\\
        \left(a\Theta_\kappa +\kappa\Id\right)  \in\Ltsol(\Omega;\Rt),
        \label{eq:correctors:Thetak}\\ 
        \left(\Theta_M + \Ms\Id\right)          \in\Ltsol(\Omega;\Rt).
        \label{eq:correctors:ThetaM}
    \end{align}
\end{lemma}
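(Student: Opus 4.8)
The plan is to solve each of the three problems \eqref{eq:correctors:Thetaa}, \eqref{eq:correctors:Thetak}, \eqref{eq:correctors:ThetaM} by a standard Lax--Milgram / orthogonal-projection argument in the Hilbert space $\Ltpot(\Omega;\Rt)$, treating each of the three columns of the unknown matrix separately. Since the three equations have the same structure (the first and third being the special case $b=a$, $b=1$ of a generic ``$\divv(b\Theta + c\,\Id)=0$'' with $b$ uniformly elliptic and $c\in L^\infty(\Omega)$), I would set up one abstract statement and apply it three times. Fix a canonical basis vector $e_j$, $j\in\{1,2,3\}$. I look for $\theta^{(j)}\in\Ltpot(\Omega)$ such that $b\,\theta^{(j)} + c\,e_j\in\Ltsol(\Omega)$, i.e.
\begin{equation}\notag
\int_\Omega \bigl(b(\omega)\theta^{(j)}(\omega) + c(\omega)e_j\bigr)\cdot \eta(\omega)\dPo = 0 \qquad\text{for all }\eta\in\Ltpot(\Omega).
\end{equation}
This is a linear variational problem for $\theta^{(j)}$ on $\Ltpot(\Omega)$ with bilinear form $B(\theta,\eta):=\int_\Omega b\,\theta\cdot\eta\dPo$ and right-hand side $\ell(\eta):=-\int_\Omega c\,e_j\cdot\eta\dPo$. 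Coercivity and continuity of $B$ follow from \ref{asPara_Ex}, i.e. $0<c_{\rm ex}\le b\le C_{\rm ex}$ (for the $b=1$ case these bounds are trivial), and $\ell$ is a bounded linear functional on $\Ltpot(\Omega)$ because $c\in L^\infty(\Omega)$ and $|e_j|=1$. By the Lax--Milgram theorem there is a unique $\theta^{(j)}\in\Ltpot(\Omega)$ solving the variational problem, and assembling $\Theta:=(\theta^{(1)},\theta^{(2)},\theta^{(3)})^\intercal$ gives the desired matrix-valued corrector; uniqueness of $\Theta$ is inherited columnwise from uniqueness of each $\theta^{(j)}$. Applying this with $(b,c)=(a,a)$, $(b,c)=(a,\kappa)$, and $(b,c)=(1,\Ms)$ yields $\Theta_a,\Theta_\kappa,\Theta_M$ respectively.

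The only mild subtlety — and the step I would be most careful about — is the translation between the two formulations: the divergence-form equation \eqref{eq:mt:correctors} posed in $L^2(\Omega,L^2_{\rm loc}(\Rt))$ via $\nabla\Phi$, and the intrinsic formulation on $(\Omega,\Sigma,\P)$ via $\Theta\in\Ltpot(\Omega;\Rt)$. Here one should note that ``$v\in\Ltsol(\Omega;\Rt)$'' is precisely the weak (intrinsic) formulation of $\divv v=0$: by Definition \ref{def:LtpotLtsol} together with the integration-by-parts Lemma \ref{lemma:stochintegrationparts}, $v\perp\nablaomega u$ for all $u\in C^1(\Omega)$ is the statement that the stationary extension $v(T_x\omega)$ is divergence-free in $\mathcal D'(\Rt)$ for $\P$-a.e.\ $\omega$; the corresponding potential $\nabla\Phi$ is then recovered (up to the usual caveat recorded in Remark \ref{rk:correctors}) by setting $\nabla\Phi(x,\omega)=\Theta(T_x\omega)$, which is a gradient in $x$ for a.e.\ $\omega$ precisely because $\Theta\in\Ltpot(\Omega;\Rt)$. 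No genuine obstacle arises; the whole argument is a routine Hilbert-space projection once the functional-analytic setup of Subsection \ref{subs:correctors} is in place.
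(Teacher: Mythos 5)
Your proposal is correct and follows essentially the same route as the paper: the paper also observes that $\Ltpot(\Omega;\Rt)$ is a closed subspace of $L^2(\Omega;\Rtt)$, rewrites each condition as the variational problem $\int_\Omega a\,\Theta:B\,\dP = -(c\,\Id,B)_{L^2}$ for all $B\in\Ltpot(\Omega;\Rt)$, and invokes Lax--Milgram using the uniform bounds \ref{asPara_Ex}--\ref{asPara_Msat}. Your columnwise decomposition and the closing remarks on the correspondence with $\nabla\Phi$ are harmless additions that do not change the argument.
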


\begin{proof}
    As a closed subspace of $L^2(\Omega;\Rtt)$, the set $\Ltpot(\Omega;\Rt)$ is a Hilbert space. 
    Condition \eqref{eq:correctors:Thetaa} is equivalent to 
    \begin{equation*}
        \int_\Omega a(\omega) \Theta_a(\omega):B(\omega)\dPo 
        =
        -(a\Id,B)_{L^2(\Omega;\Rtt)}
        \quad\text{for all }B\in\Ltpot(\Omega;\Rt).
    \end{equation*}
    Hence, due to \ref{asPara_Ex} -- the uniform positivity and boundedness of $a$ -- there exists a unique solution due to the Lax--Milgram theorem.
    The same follows analogously for \eqref{eq:correctors:Thetak} and \eqref{eq:correctors:ThetaM} owing to \ref{asPara_DMI} and \ref{asPara_Msat}.
\end{proof}

\begin{remark}[The correctors via orthogonal projections]\label{rem:correctorsProjections}
 Note that $(\cdot,\cdot)_{a}\coloneqq (\cdot,a\,\cdot)_{L^2}$ is a scalar product on $L^2(\Omega;\Rt)$ or $L^2(\Omega;\Rtt)$, respectively, due to \ref{asPara_Ex}, and that such scalar product induces the same topology on the respective spaces.
 From this perspective, $\Theta_a e_i$ and $\Theta_\kappa e_i$ are the orthogonal projections with respect to $(\cdot,\cdot)_{a}$ of $\omega\mapsto -e_i$ and $
 \omega\mapsto -\frac{\kappa(\omega)}{a(\omega)}e_i$ onto $\Ltpot(\Omega)$ for $i=1,2,3$.
 Similarly, $\Theta_M$ is the orthogonal projection with respect to $(\cdot,\cdot)_{L^2}$ of $\omega\mapsto -\Ms e_i$ for $i=1,2,3$.
\end{remark}
\subsection{Two-scale Limits of fields in ${\rm H^1(D; \hspace{0.03cm}\M)}$} 
\label{section: twoscale lim}
We conclude this section with a  characterization of the stochastic two-scale limit of manifold-valued sequences in $H^1$. This result will be instrumental for the analysis in Section \ref{section:GammaresultExDMI}. Throughout this subsection, $\M$ will be a $C^2$ orientable hypersurface in $\R^3$. 
In this setting, two-scale limits will be the sum of standard $L^2$ gradients with elements of the subspace  $\Ltpot(\Omega; \T[s]\M)\subset \Ltpot(\Omega; \R^3)$ with $s\in\M$, which is defined as the closure of the set $\{ \nablaomega u\hspace{0.03cm} : \hspace{0.03cm} u\in C^1(\Omega, \T[s]\M)\}$ in $L^2(\Omega; \R^{3\times 3})$.

\begin{prop}
\label{prop:compactness}
Let $\M$ be a $C^2$ orientable hypersurface in $\R^3$ admitting a tubular neighborhood of uniform thickness, and let $\{u_\e\}_{\e>0}$ be a sequence of functions in $H^1(D; \hspace{0.03cm} \M)$. Let $u_0\in H^1(\domain; \hspace{0.03cm} \R^3)$ and $\Xi\in L^2(\domain; \hspace{0.03cm} \Ltpot(\Omega; \hspace{0.03cm}\R^3))$ be such that 
   \begin{align}
   	u_\e        &\to    u_0                             \qquad\mbox{strongly in } L^2(D; \hspace{0.03cm} \R^3), \label{cpt1}\\
   	\nabla u_\e &\wsts  \nabla u_0(x)+\Xi(x, \omega)    \qquad \mbox{weakly in } L^2(D\times\Omega; \hspace{0.03cm} \R^{3\times 3}).\label{cpt2}
   \end{align}
Then, $u_0\in H^1(\domain; \hspace{0.03cm} \M)$ and  $\Xi\in L^2(\domain; \hspace{0.03cm} L^2_\pot(\Omega; \hspace{0.03cm} \T[u_0(x)]\M))$.
\end{prop}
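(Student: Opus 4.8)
The plan is to establish the two constraints separately: first that $u_0$ takes values in $\M$, and then that the two-scale corrector $\Xi(x,\cdot)$ is tangent to $\M$ at $u_0(x)$. The first part is the easier of the two. Since $u_\e\to u_0$ strongly in $L^2(D;\R^3)$, a subsequence converges pointwise a.e.\ in $D$; as each $u_\e(x)\in\M$ and $\M$ is closed (being a compact $C^2$ hypersurface), the limit $u_0(x)\in\M$ for a.e.\ $x$. Equivalently, one may use a defining function $\gamma\in C^2$ with $\M=\gamma^{-1}(\{0\})$ near $\M$ and pass to the limit in $\int_D |\gamma(u_\e)|\dx\to 0$ via continuity of $\gamma$ and dominated convergence. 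Combined with the hypothesis $u_0\in H^1(D;\R^3)$, this gives $u_0\in H^1(D;\M)$.

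The heart of the matter is showing $\Xi(x,\omega)\in\bT[u_0(x)]\M$ for a.e.\ $(x,\omega)$, i.e.\ that each row $\Xi(x,\omega)e_i$ is orthogonal to the normal $\nu(u_0(x))$ of $\M$. The idea is to exploit the pointwise constraint $\gamma(u_\e(x))=0$, which upon differentiating gives $\nabla\gamma(u_\e(x))^\intercal\nabla u_\e(x)=0$ a.e.\ in $D$ (here $\nabla\gamma(u_\e)$ is, up to normalization, the normal to $\M$ at $u_\e$). The strategy is to test the weak two-scale convergence \eqref{cpt2} against an oscillating test field of the form $x\mapsto\psi(x)\,\nabla\gamma(u_\e(x))\,b(T_{x/\e}\tomega)$ with $\psi\in C^\infty_{\rm c}(D)$, $b\in C(\Omega)$: because $\gamma\in C^2$ and $u_\e\to u_0$ strongly in $L^2$ (hence, along a subsequence, a.e.\ and dominatedly), the factor $\nabla\gamma(u_\e)\to\nabla\gamma(u_0)$ strongly in $L^2(D;\R^3)$, so it is an admissible "strong" factor in the weak--strong two-scale pairing (Proposition \ref{prop:weak-strong conv}, in its vector-valued form). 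On the one side, the pairing of $\nabla u_\e$ with $\nabla\gamma(u_\e)$ vanishes identically for every $\e$ by the differentiated constraint; on the other side, it converges to
\[
\int_D\int_\Omega \psi(x)\,b(\omega)\,\nabla\gamma(u_0(x))^\intercal\bigl(\nabla u_0(x)+\Xi(x,\omega)\bigr)\dPo\dx .
\]
Since $u_0\in H^1(D;\M)$ already forces $\nabla\gamma(u_0)^\intercal\nabla u_0=0$ a.e., we are left with $\int_D\int_\Omega\psi(x)b(\omega)\,\nabla\gamma(u_0(x))^\intercal\Xi(x,\omega)\dPo\dx=0$ for all such $\psi,b$, whence $\nabla\gamma(u_0(x))^\intercal\Xi(x,\omega)=0$ for a.e.\ $(x,\omega)$. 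This says precisely that each row of $\Xi(x,\omega)$ lies in $\T[u_0(x)]\M$.

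It remains to upgrade "$\Xi(x,\cdot)$ is rowwise tangent a.e." to the membership $\Xi\in L^2(D;\Ltpot(\Omega;\T[u_0(x)]\M))$, i.e.\ to check that the potential structure of $\Xi$ is compatible with the tangency. Here I would argue that $\Ltpot(\Omega;\T[s]\M)$ coincides with the set of $G\in\Ltpot(\Omega;\R^3)$ whose rows are $\P$-a.e.\ in $\T[s]\M$: the inclusion "$\subseteq$" is clear from the definition as a closure of $\nablaomega u$ with $u\in C^1(\Omega,\T[s]\M)$, and for "$\supseteq$" one uses that the orthogonal projection $\pi_{\T[s]\M}$ onto the (fixed, finite-dimensional) tangent space commutes with $\nablaomega$ in the sense needed—applying $\pi_{\T[s]\M}$ componentwise to an approximating sequence of gradients preserves both the potential structure and $L^2$-convergence. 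Since this projection acts by a constant (in $\omega$) linear map for each fixed $s$, it maps $\Ltpot(\Omega;\R^3)$ into $\Ltpot(\Omega;\T[s]\M)$, and $\Xi(x,\cdot)$, being already rowwise tangent, is fixed by it. Measurability in $x$ of $x\mapsto\Xi(x,\cdot)\in\Ltpot(\Omega;\T[u_0(x)]\M)$ follows from the measurability of $x\mapsto u_0(x)$ together with the $C^2$ (hence continuous) dependence of $\pi_{\T[s]\M}$ on $s$.

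The main obstacle I anticipate is the technical justification that $\nabla\gamma(u_\e)$ is a legitimate strong two-scale test factor and that the weak--strong pairing of Proposition \ref{prop:weak-strong conv} applies in the vector/matrix-valued setting with the contraction $\nabla\gamma(u_0)^\intercal\Xi$: one must confirm $\nabla\gamma(u_\e)\to\nabla\gamma(u_0)$ strongly in $L^2(D)$ (using the uniform $C^1$-bound on $\gamma$ near the compact $\M$ and dominated convergence along an a.e.-convergent subsequence), and then extend the scalar weak--strong principle componentwise. A secondary subtlety, already flagged above, is the identification of $\Ltpot(\Omega;\T[s]\M)$ with rowwise-tangent potential fields, which is what makes "tangent a.e." equivalent to the claimed functional-analytic membership.
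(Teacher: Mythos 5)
Your proposal is correct and follows essentially the same route as the paper: $u_0(x)\in\M$ a.e.\ from the strong $L^2$ convergence, tangency of $\Xi$ obtained by passing to the stochastic two-scale limit in the differentiated constraint $\nabla(\gamma\circ u_\e)=0$ via the weak--strong principle with the strongly convergent factor $\nabla\gamma(u_\e)$, and the upgrade to $\Xi(x,\cdot)\in\Ltpot(\Omega;\hspace{0.03cm}\T[u_0(x)]\M)$ by projecting an approximating sequence of gradients $\nablaomega v_n$ onto the tangent space, the paper carrying out explicitly the quantitative estimate that your continuity-of-the-projection argument encapsulates. The only blemish is a harmless row/column transposition in the tangency statement (whether $\Xi\,\nabla\gamma(u_0)=0$ or $\nabla\gamma(u_0)^\intercal\Xi=0$), which just reflects the paper's convention that $\nabla u$ is the transpose of the Jacobian.
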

\begin{proof}
The assumptions on $\M$ ensure the existence of an open tubular neighborhood $U\in\R^3$ of $\M$ and a $C^2$ function $\gamma: U\to\R$ which has $0$ as a regular value and with $\M= \gamma^{-1}(\lbrace 0\rbrace)$. 
Thanks to \eqref{cpt1} combined with the continuity of $\gamma$ we deduce that, up to the extraction of a non-relabelled subsequence, $0=\gamma (u_\e(x))\to \gamma(u_0(x))=0$ for almost every $x\in D$. 
This implies that $u_0(x)\in\M$ for almost every $x\in D$, and hence $u_0\in H^1(\domain,\M)$.  
\par 
Next, we show that $\Xi(x, \cdot)\in\Ltpot(\Omega; \hspace{0.03cm} \T[u_0(x)]\M)$ for almost every $x\in\domain$.
For $x\in D$ fixed, we need to prove that there exists a sequence $\lbrace \wt{v}_n\rbrace_{n\in\N}\subset C^1(\Omega,\T[u_0(x)]\M)$, possibly dependent on $x$, and such that 
   \begin{equation}
       \label{appr1}
       \|\nablaomega \wt{v}_n-\Xi(x,\cdot)\|_{L^2(\Omega; \hspace{0.03cm} \R^{3\times 3})}\to 0 \qquad \mbox{as } n\to\infty.
   \end{equation}
First, note that since $\Xi(x,\cdot)\in \Ltpot(\Omega; \hspace{0.03cm}\R^3)$, there exists a sequence $\lbrace v_n\rbrace_{n\in\N}\subset C^1(\Omega; \hspace{0.03cm} \R^3)$ such that 
   \begin{equation}
       \label{appr2}
    \|\nablaomega v_n-\Xi(x,\cdot)\|_{L^2(\Omega; \hspace{0.03cm} \R^{3\times 3})}\to 0 \qquad \mbox{as } n\to\infty.
   \end{equation}
We define the maps $\widetilde{v}_n\in C^1(\Omega,\T[u_0(x)]\M)$ as the nearest point projections of $v_n$ onto $\T[u_0(x)]\M$. Namely, 
\begin{equation}
    \notag
    \widetilde{v}_n(\omega)
    \coloneqq
    v_n(\omega) - \left(v_n(\omega) \cdot \frac{\nabla \gamma(u_0(x))}{ |\nabla \gamma(u_0(x))|} \right){\nabla \gamma(u_0(x))\over |\nabla \gamma(u_0(x))|}.
\end{equation}
Now, 
   \begin{align}
       \|\nablaomega \wt{v}_n-\Xi(x,\cdot)\|_{L^2(\Omega; \hspace{0.03cm} \R^{3\times 3})} 
       &\leq 
       \|\nablaomega \wt{v}_n-\nablaomega v_n\|_{L^2(\Omega; \hspace{0.03cm} \R^{3\times 3})}
       +\|\nablaomega v_n-\Xi(x,\cdot)\|_{L^2(\Omega; \hspace{0.03cm} \R^{3\times 3})}. \notag
   \end{align}
To prove \eqref{appr1}, thanks to \eqref{appr2}, it is enough to show that $\|\nablaomega \wt{v}_n-\nablaomega v_n\|_{L^2(\Omega; \hspace{0.03cm} \R^{3\times 3})} \to 0$ as $n\to\infty$. To this end, note that for any $\e>0$, there holds
    \begin{equation}
    \notag
    0=\nabla (\gamma\circ u_\e) = \nabla u_\e \nabla \gamma(u_\e).
    \end{equation}
From \eqref{cpt1}, it follows that $\nabla \gamma(u_\e) \to\nabla \gamma(u_0)$ strongly in $L^2(D; \hspace{0.03cm} \R^3)$. Thus, in view of \eqref{cpt2} we obtain that, for every $\varphi\in C^\infty_{\rm c} (D)$ and for every $b\in C(\Omega; \hspace{0.03cm} \R^3)$, 
\begin{multline*}
   	0 =
   	\int_D \nabla u_\e(x) \nabla\gamma (u_\e)\cdot b(T_{x/\e} \widetilde{\omega})\varphi(x) \dx 
    \to \int_D\int_\Omega (\nabla u_0(x)+\Xi(x, \omega))\nabla\gamma(u_0)\cdot b(\omega)\varphi(x)\dPo\dx
\end{multline*}
as $\e\to 0$. Since $\nabla \gamma(u_0)\nabla u_0=\nabla(\gamma\circ u_0)=0$, we conclude that for every $\varphi\in C^\infty_{\rm c} (D)$ and for every $b\in C(\Omega; \hspace{0.03cm} \R^3)$ 
   \begin{equation}
   	\notag
   	\int_D\int_\Omega \Xi(x, \omega) \nabla\gamma(u_0)\cdot b(\omega)\varphi(x)\dPo\dx  =0. 
   \end{equation}
It follows that, almost everywhere in $D$, and for every $b\in C(\Omega; \hspace{0.03cm} \R^3)$, 
    \begin{equation}
    \label{eq:valueontangent1}
        \int_\Omega \Xi(x, \omega) \nabla\gamma(u_0)\cdot b(\omega)\dPo =0.
    \end{equation}
Therefore, for almost every $x\in\domain$, we deduce that 
\begin{align*}
    &\|\nablaomega(v_n - \wt{v}_n)\|_{L^2(\Omega; \R^{3\times 3})}^2
    =
    |\nabla \gamma(u_0(x))|^{-4}
    \|\nabla \gamma(u_0(x)) \otimes \nablaomega v_n\nabla \gamma(u_0(x))\|_{L^2(\Omega; \R^{3\times 3})}^2\\
    &\qquad\leq 
    2|\nabla \gamma(u_0(x))|^{-2}\Big(
    \|(\nablaomega v_n-\Xi(x,\cdot))\nabla \gamma(u_0(x))\|_{L^2(\Omega; \Rt)}^2+\|\Xi(x,\cdot)\nabla \gamma(u_0(x))\|_{L^2(\Omega; \Rt)}^2\Big)\\
    &\qquad\leq
    2\|\nablaomega v_n-\Xi(x,\cdot)\|_{L^2(\Omega; \R^{3\times 3})}^2
\end{align*}
where in the last inequality we have used \eqref{eq:valueontangent1} by approximating $\Xi(x,\cdot)\nabla \gamma(u_0(x))$ in $C(\Omega,\Rt)$. 
This along with \eqref{appr2} implies \eqref{appr1} and concludes the proof.
\end{proof}

\section{The limit behavior of the exchange and DMI energies}\label{section:GammaresultExDMI}
In this section we prove Theorem \ref{thm:GeGammaConv}. In what follows, $\M$ is a bounded, $C^2$ orientable hypersurface of $\Rt$ that admits a tubular neighborhood of uniform thickness. 
The tangent space at $s\in\M$ is denoted by $\T[s]\M$. 
The vector bundle $\boldT\M$ is given by $\boldT\M\coloneqq \bigcup_{s\in\M}\{s\}\times \bT[s]\M$, with $\bT[s]\M\coloneqq (\T[s]\M)^3$. We will indicate by $\xi^\intercal\coloneqq (\xi_1^\intercal, \xi_2^\intercal, \xi_3^\intercal)$ a generic element of $\bT[s]\M$.  
As a corollary of Theorem \ref{thm:GeGammaConv}, in the special case $\M=\St$ we will infer a characterization of
the asymptotic behavior of the sum of exchange energy and DMI term. Recall that 
\begin{align}
    \G_\e(m)
    &\coloneqq \E_\e(m)+\K_\e(m) \notag\\
    & ={1\over 2} \int_\domain a(T_{x/ \e}\tomega)\nabla m(x):\nabla m(x)\dx  
                - \int_\domain \kappa(T_{x/\e}\tomega) \bchi(m(x)):\nabla m(x)\dx \notag\\
    & ={1\over 2} \int_\domain a(T_{x/ \e}\tomega)\left|\nabla m(x)-\frac{\kappa(T_{x/\e}\tomega)}{a(T_{x/\e}\tomega)} \bchi(m(x))\right|^2\dx  
                - \int_\domain \frac{\kappa^2(T_{x/\e}\tomega)}{a(T_{x/\e}\tomega)} |m(x)|^2\dx,\label{def:funncEx+DMI}
\end{align}
for every $m\in H^1(D; \M)$, where $\bchi$ is the map defined in \eqref{eq:intro:def:bchi} and $\tomega\in\wt{\Omega}$ as in Remark \ref{rem:typicaltrajectories}. 

This section is organized as follows. First, in Subsection \ref{subsect:homDensity} we provide an alternative formulation of the homogenized energy $\Ghom$ defined in \eqref{eq:intro:Ghom} via a pointwise minimization problem on $\Ltpot(\Omega;\hspace{0.03cm} \T[s]\M)$.
In Subsection \ref{subsect:liminfine}, we prove a compactness result as well as the liminf inequality. Finally, Subsection \ref{subsect:limsupine} is devoted to the proof of the optimality of the lower bound identified in Subsection \ref{subsect:liminfine}.

\subsection{Description of the tangentially homogenized energy density}\label{subsect:homDensity}

\begin{prop}[Characterization of the homogenized energy density]\label{prop:ThomGhom}
    Under the assumptions of Theorem \ref{thm:GeGammaConv}, there holds
    \begin{equation}
    \notag
       \Ghom(m) =  \int_D T_\hom(m(x), \nabla m(x))\dx,
    \end{equation}
    for every $m\in H^1(\domain;\M)$, where, for every $s\in \M$ and $A^\intercal\in \bT[s]\M\subset\Rtt$, the tangentially homogenized energy density is defined as
    \begin{multline}\label{eq:def:Thom}
        T_\hom(s, A) 
        \coloneqq 
        \min\biggl\lbrace
            \int_\Omega \left(\frac{1}{2}a(\omega)|A+\Xi(\omega)|^2 - \kappa(\omega) \bchi(s):(A+\Xi(\omega)) \right)\dPo :
            \Xi\in \Ltpot(\Omega;\hspace{0.03cm} \T[s]\M)
        \biggr\rbrace.
    \end{multline}
    Additionally, the minimum problem above admits a unique solution, given by 
    \begin{equation}\label{eq:ThomMinimzer}
        \Xi[s,A] =  \Theta_a A - \Theta_\kappa \bchi(s)\pi_{\T[s]\M}.
    \end{equation}
\end{prop}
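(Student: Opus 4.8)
The plan is to recognize the functional inside the minimization in \eqref{eq:def:Thom} as a strictly convex, coercive quadratic functional on the Hilbert space $\Ltpot(\Omega;\T[s]\M)$, so that existence and uniqueness of the minimizer follow from the direct method (or Lax--Milgram applied to the Euler--Lagrange equation), and then to identify the minimizer explicitly via the corrector equations \eqref{eq:correctors:Thetaa}--\eqref{eq:correctors:ThetaM} and finally compute the value of the minimum to match \eqref{eq:intro:Ghom}. Concretely, fix $s\in\M$ and $A^\intercal\in\bT[s]\M$, and write
\begin{equation*}
    J_{s,A}(\Xi) \coloneqq \int_\Omega\left(\tfrac12 a(\omega)|A+\Xi(\omega)|^2 - \kappa(\omega)\bchi(s):(A+\Xi(\omega))\right)\dPo.
\end{equation*}
By \ref{asPara_Ex} the quadratic form $\Xi\mapsto\int_\Omega a|\Xi|^2\dPo$ is equivalent to the $L^2$-norm squared on the closed subspace $\Ltpot(\Omega;\T[s]\M)\subset L^2(\Omega;\Rtt)$, so $J_{s,A}$ is strictly convex, continuous, and coercive; hence it admits a unique minimizer $\Xi[s,A]$, characterized by the first-order condition
\begin{equation*}
    \int_\Omega a(\omega)\bigl(A+\Xi[s,A](\omega)\bigr):B(\omega)\dPo = \int_\Omega \kappa(\omega)\,\bchi(s):B(\omega)\dPo \qquad\text{for all }B\in\Ltpot(\Omega;\T[s]\M),
\end{equation*}
i.e. $a(A+\Xi[s,A]) - \kappa\bchi(s)$ is orthogonal to $\Ltpot(\Omega;\T[s]\M)$.

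Next I would verify that the candidate $\Xi[s,A] = \Theta_a A - \Theta_\kappa\bchi(s)\pi_{\T[s]\M}$ solves this equation. The subtlety here is the tangency constraint: $\Theta_a,\Theta_\kappa$ are the "full" correctors from Lemma \ref{lem:correctors}, valued in $\Ltpot(\Omega;\Rt)$, whereas the constraint space is $\Ltpot(\Omega;\T[s]\M)$. Since $A^\intercal\in\bT[s]\M$, each column of $A$ already lies in $\T[s]\M$, so $\Theta_a A$ — read columnwise as $\Theta_a$ applied to each column of $A$ — lands in $\Ltpot(\Omega;\T[s]\M)$ provided the columns of $\Theta_a$ are themselves; but more carefully one should note $\Theta_a A$ means the matrix whose columns are $D_\omega(\text{corrector})$ contracted appropriately, and the relevant point is that testing \eqref{eq:correctors:Thetaa} and \eqref{eq:correctors:Thetak} against $B\in\Ltpot(\Omega;\T[s]\M)\subset\Ltpot(\Omega;\Rt)$ is legitimate because the constraint space embeds in the full one. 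Then, using that $a\Theta_a+a\Id\in\Ltsol$ and $a\Theta_\kappa+\kappa\Id\in\Ltsol$ (Lemma \ref{lem:correctors}), one computes
\begin{align*}
    \int_\Omega a(A+\Theta_aA-\Theta_\kappa\bchi(s)\pi_{\T[s]\M}):B\dPo
    &= \int_\Omega (a\Theta_a+a\Id)A:B\dPo - \int_\Omega (a\Theta_\kappa)\bchi(s)\pi_{\T[s]\M}:B\dPo\\
    &= 0 - \int_\Omega (-\kappa\Id)\bchi(s)\pi_{\T[s]\M}:B\dPo\\
    &= \int_\Omega \kappa\,\bchi(s):B\dPo,
\end{align*}
where in the last line I use that for $B$ valued in $\T[s]\M$ (columnwise) one has $\bchi(s)\pi_{\T[s]\M}:B = \bchi(s):B$, since $\pi_{\T[s]\M}$ acts as the identity on the columns of $B$. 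This confirms $\Xi[s,A]$ is the minimizer, giving \eqref{eq:ThomMinimzer}.

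Finally, I would substitute $\Xi[s,A]$ back into $J_{s,A}$ to obtain the pointwise density and then integrate over $D$ with $s=m(x)$, $A=\nabla m(x)$, checking it equals \eqref{eq:intro:Ghom}. Using the orthogonality relation above with $B=\Xi[s,A]$ itself, $\int_\Omega a(A+\Xi):\Xi\dPo = \int_\Omega\kappa\bchi(s):\Xi\dPo$, one simplifies
\begin{equation*}
    J_{s,A}(\Xi[s,A]) = \tfrac12\int_\Omega a(A+\Xi):(A+\Xi)\dPo - \int_\Omega \kappa\bchi(s):(A+\Xi)\dPo = \tfrac12\int_\Omega a A:(A+\Xi)\dPo - \int_\Omega\kappa\bchi(s):A\dPo,
\end{equation*}
and then expanding $\Xi = \Theta_aA-\Theta_\kappa\bchi(s)\pi_{\T[s]\M}$ and using the defining relations of $\Theta_a,\Theta_\kappa$ to evaluate the cross terms $\int_\Omega a\Theta_aA:A$, $\int_\Omega a\Theta_aA:\Theta_\kappa\bchi(s)$, etc., one arrives at a quadratic expression in $A$ and $\bchi(s)$ with coefficients $\Ev[a\Id-(\nabla\Phi_a)^\intercal a\nabla\Phi_a]$, $\Ev[\kappa\Id-(\nabla\Phi_a)^\intercal a\nabla\Phi_\kappa]$, and $\Ev[(\nabla\Phi_\kappa)^\intercal a\nabla\Phi_\kappa]$, matching term by term the three integrands in \eqref{eq:intro:Ghom} (with the projection $\pi_{\T_s\M}$ appearing exactly on the $\bchi$-$\bchi$ term). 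The main obstacle I anticipate is bookkeeping the tangent-space projections correctly — ensuring that $\Theta_\kappa\bchi(s)\pi_{\T[s]\M}$ (rather than $\Theta_\kappa\bchi(s)$) is genuinely the right object, that it lies in the constraint space, and that the $\pi_{\T_s\M}$ insertions in the final formula are consistent — together with the notational care needed in contracting the $3\times 3$-matrix-valued correctors against matrices columnwise. The convexity/existence part is routine.
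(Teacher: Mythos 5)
Your overall strategy coincides with the paper's: both identify the minimizer through the orthogonality (Euler--Lagrange) condition for the $a$-weighted quadratic functional, recognize $\Xi[s,A]$ via the correctors of Lemma \ref{lem:correctors}, and then substitute back. The existence/uniqueness part and the verification of the first-order condition for the candidate are fine (your trick of moving the constant matrix onto the test field, i.e.\ $\int(a\Theta_a+a\Id)A:B=\int(a\Theta_a+a\Id):(BA^\intercal)$ with $BA^\intercal\in\Ltpot$, is exactly what makes the corrector equations applicable). Two points, however, need attention.

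First, a genuine algebra error in the final substitution. From
\begin{equation*}
J_{s,A}(\Xi)=\tfrac12\textstyle\int_\Omega a(A+\Xi):(A+\Xi)\dP-\int_\Omega\kappa\bchi(s):(A+\Xi)\dP
\end{equation*}
and the orthogonality relation $\int_\Omega a(A+\Xi):\Xi\dP=\int_\Omega\kappa\bchi(s):\Xi\dP$, one gets
\begin{equation*}
J_{s,A}(\Xi[s,A])=\tfrac12\textstyle\int_\Omega aA:(A+\Xi)\dP-\int_\Omega\kappa\bchi(s):A\dP\;\mathbf{-\,\tfrac12\textstyle\int_\Omega\kappa\bchi(s):\Xi\dP},
\end{equation*}
and you have dropped the last term. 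It is not zero: e.g.\ $\int_\Omega\kappa\bchi(s):\Theta_\kappa\bchi(s)\pi_{\T[s]\M}\dP$ evaluates (via $\kappa\Id=-a\Theta_\kappa+(\text{sol})$) to $-\bchi(s):\Ev[\Theta_\kappa^\intercal a\Theta_\kappa]\bchi(s)\pi_{\T[s]\M}$, which in the multilayer case of Section \ref{section:multilayers} equals $-(\Ev[\kappa^2/a]-\Ev[\kappa/a]^2\Ev[a^{-1}]^{-1})|e_3\times s|^2\neq0$ in general. Carried through, your displayed formula would not reproduce \eqref{eq:intro:Ghom}: the quadratic term in $\bchi(s)$ would be missing entirely (your expression $\tfrac12 aA:(A+\Xi)$ contains no $\Theta_\kappa$--$\Theta_\kappa$ contribution). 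The clean identity is $T_\hom(s,A)=\int_\Omega\bigl(\tfrac{a}{2}|A|^2-\tfrac{a}{2}|\Xi[s,A]|^2-\kappa\bchi(s):A\bigr)\dP$, obtained by eliminating \emph{all} the $\Xi$-linear terms with the orthogonality relation; expanding $|\Xi[s,A]|^2$ then yields the three coefficients in \eqref{eq:intro:Ghom}. Since you only assert the final term-by-term matching rather than performing it, this step of the proof is both incomplete and, as written, based on a false identity.

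Second, for the first-order condition to characterize the minimizer you must show the candidate is \emph{admissible}, i.e.\ $\Theta_aA-\Theta_\kappa\bchi(s)\pi_{\T[s]\M}\in\Ltpot(\Omega;\T[s]\M)$, not merely that testing against the smaller space is legitimate. You flag this but leave it unresolved. The fix is the same algebraic identity as above: if $\Theta=\lim\nablaomega u_n$ with $u_n\in C^1(\Omega;\Rt)$, then $\nablaomega u_n\,M=\nablaomega(M^\intercal u_n)$ for any constant $M\in\Rtt$, and $M^\intercal u_n$ takes values in $\T[s]\M$ whenever $M^\intercal$ maps $\Rt$ into $\T[s]\M$ --- which holds for $M=A$ (since $A^\intercal\in\bT[s]\M$) and for $M=\bchi(s)\pi_{\T[s]\M}$. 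The paper sidesteps both issues at once by phrasing Step 1 as a nested orthogonal projection with respect to $(\cdot,\cdot)_a$ (Remark \ref{rem:correctorsProjections}), which delivers admissibility, optimality, and the energy identity $\|P(v)\|_a^2=(v,P(v))_a$ in one stroke; your Euler--Lagrange route is equivalent but requires you to supply these pieces explicitly.
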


\begin{proof}
 We will prove the proposition in 2 steps.
 First, we will show the characterization \eqref{eq:ThomMinimzer} of the unique solution to the minimization problem \eqref{eq:def:Thom}.
 Second, plugging this minimizer back into \eqref{eq:def:Thom}, we will obtain the density of $\Ghom$ as defined in \eqref{eq:intro:Ghom}.

 \underline{Step 1: Finding the minimizer for \eqref{eq:def:Thom}.} 
 We set 
 \begin{align*}
     \Xi[s,A]
     &\coloneqq
     \argmin_{\Xi\in\Ltpot(\Omega;\T[s]\M)} \left\lbrace\int_\Omega \left({1\over 2}a(\omega)|A+\Xi(\omega)|^2 - \kappa(\omega) \bchi(s):(A+\Xi(\omega)) \right)\dPo\right\rbrace
 \intertext{and via the quadratic structure, omitting the constant terms $-\frac{1}{2}\int_\Omega \frac{\kappa^2(\omega)}{a(\omega)}|\bchi(s)|^2\dPo$, we obtain }
     &=
     \argmin_{\Xi\in\Ltpot(\Omega;\T[s]\M)} \left\lbrace  {1\over 2}\int_\Omega a(\omega)\left|A+\Xi(\omega)-\frac{\kappa(\omega)}{a(\omega)} \bchi(s)\right|^2 \dPo\right\rbrace\\
     &=
     \argmin_{\Xi\in\Ltpot(\Omega;\T[s]\M)} \left\lbrace \left(\Xi+A-\kappa a^{-1} \bchi(s),\,\Xi+A-\kappa a^{-1} \bchi(s) \right)_a\right\rbrace
 \end{align*}
 where $(\cdot,\cdot)_a$ is the scalar product on $L^2(\Omega;\Rtt)$ introduced in Remark \ref{rem:correctorsProjections}.
 Hence, $\Xi[s,A]$ is given as the orthogonal projection with respect to $(\cdot,\cdot)_a$ of $-A+\kappa a^{-1} \bchi(s)$ onto $\Ltpot(\Omega;\hspace{0.03cm}\T[s]\M)$.
 On the one hand, due to Remark \ref{rem:correctorsProjections}, the projection onto $\Ltpot(\Omega;\hspace{0.03cm}\Rt)$ is given by $\Theta_a A - \Theta_\kappa \bchi(s)$.
 On the other hand, the projection from $\Ltpot(\Omega;\hspace{0.03cm}\Rt)$ onto $\Ltpot(\Omega;\hspace{0.03cm}\T[s]\M)$ is given by right-multiplication with $\pi_{\T[s]\M}\in\Rtt$ (the representation of the projection from $\Rt$ onto $\T[s]\M$).
 Hence, 
 \begin{equation}\label{eq:homG:minimizer}
     \Xi[s,A] = \Theta_a A \pi_{\T[s]\M} - \Theta_\kappa \bchi(s)\pi_{\T[s]\M}
    = \Theta_a A - \Theta_\kappa \bchi(s)\pi_{\T[s]\M},
 \end{equation}
 where we used that $A \pi_{\T[s]\M} = (\pi_{\T[s]\M} A^\intercal)^\intercal = A$ with the first equation following from $\pi_{\T[s]\M}=(\pi_{\T[s]\M})^\intercal$ and the second from $A^\intercal\in \bT[s]\M$.

 \underline{Step 2: Plugging $\Xi[s,A]$ back into \eqref{eq:def:Thom}.} 
 Due to the properties of orthogonal projections, we have that for all $\Psi\in\Ltpot(\Omega;\hspace{0.03cm}\T[s]\M)$
 \begin{equation*}
     \left(\Xi[s,A]+A-\kappa a^{-1} \bchi(s),\, \Psi\right)_a = 0.
 \end{equation*}
 Since in particular $\Xi[s,A]\in\Ltpot(\Omega;\hspace{0.03cm}\T[s]\M)$, it follows that
 \begin{equation*}
     \int_\Omega \kappa \bchi(s):\Xi[s,A] \dP = \int_\Omega a \left(\Xi[s,A]+A\right):\Xi[s,A] \dP. 
 \end{equation*}
 Plugging this into \eqref{eq:def:Thom}, resolving the square, and then applying  \eqref{eq:homG:minimizer}, we obtain
 \begin{align}
     &\nonumber T_\hom(s, A) 
     = 
     \int_\Omega \left( \frac{a}{2}|A|^2 - \frac{a}{2}|\Xi[s,A]|^2 -\kappa \bchi(s):A \right) \dP\\
     &\label{eq:explicit}\quad=
     \int_\Omega \bigg( \frac{a}{2}A:A 
                        - \frac{a}{2}A:\Theta_a^\intercal\Theta_a A
                        + aA:\Theta_a^\intercal\Theta_\kappa\bchi(s) 
                        - \frac{a}{2}\bchi(s):\Theta_\kappa^\intercal\Theta_\kappa\bchi(s)\pi_{\T[s]\M}
                        - \kappa A:\bchi(s)
                    \bigg)\dP
 \end{align}
 where for the last step we additionally used that $X:(ZY^\intercal)=(XY):Z=Y:(X^\intercal Z)$ for any $X,Y,Z\in\Rtt$, which also yields the absorption of $\pi_{\T[s]\M}$ in the mixed term as at the end of Step 1.
 Also note that $\pi_{\T[s]\M}(\pi_{\T[s]\M})^\intercal = (\pi_{\T[s]\M})^2 =\pi_{\T[s]\M}$.
 Reordering the energy terms yields the density in \eqref{eq:intro:Ghom} and thus completes the proof. Note that the measurability of $x\to T_\hom(m(x), \nabla m(x))$ follows directly by the explicit expression in \eqref{eq:explicit}.
\end{proof}

\begin{remark}[The micromagnetic case]
    For $\M=\St$ we observe $\bchi(s)\pi_{\T[s]\M}=\bchi(s)$ and -- following the proof above -- could therefore omit the projection from $\Ltpot(\Omega;\hspace{0.03cm}\Rt)$ onto $\Ltpot(\Omega;\hspace{0.03cm}\T[s]\M)$.
    This means, that the minimizer of \eqref{eq:def:Thom} within $\Ltpot(\Omega;\hspace{0.03cm}\Rt)$ is automatically in $\Ltpot(\Omega;\hspace{0.03cm}\T[s]\M)$:
    \begin{equation*}
        T_\hom(s, A) 
        =
        \min_{\Xi\in\Ltpot(\Omega;\Rt)} \left\lbrace\int_\Omega \left({1\over 2}a(\omega)|A+\Xi(\omega)|^2 - \kappa(\omega) \bchi(s):(A+\Xi(\omega)) \right)\dPo\right\rbrace.
    \end{equation*}
    On a geometric level, this is the case because $s$ is the normal vector of $\T[s]\St$.
    An alternative generalization from $\St$ to arbitrary manifolds is given by replacing \eqref{eq:intro:def:bchi} with 
    \begin{equation*}
        \bchi\colon s\in\M\mapsto (e_1\times \nu_s, e_2\times \nu_s, e_3\times \nu_s)^\intercal\in \R^{3\times 3}_{\rm skew-symmetric},
    \end{equation*}
    where $\nu_s$ is the normal vector of $\T[s]\M$. This choice would allow to relax to the minimum over all $\Xi\in\Ltpot(\Omega;\Rt)$ also in the case of general manifolds, making the additional projection in \eqref{eq:intro:Ghom} unnecessary.
\end{remark}


\subsection{The liminf inequality}
\label{subsect:liminfine}
In this subsection, we provide a compactness result, as well as a lower bound for the asymptotic behavior of the energy $\G_\e$ using the characterization of $\Ghom$ from Proposition \ref{prop:ThomGhom}. 


\begin{thm}
\label{thm:lowerbound}
Let $\{m_\e\}_{\e>0}\subset H^1(D; \hspace{0.03cm}\M)$ be such that $\sup_{\e> 0}\G_\e(m_\e)<\infty$,  and let $\wt{\omega}\in\wt{\Omega}$, cf. Remark \ref{rem:typicaltrajectories}. 
Then, there exists $m_0\in H^1(D; \hspace{0.03cm} \M)$ and $M_1\in L^2(D; \Ltpot(\Omega; \T[m_0]\M))$ such that 
   \begin{align}
       m_\e&\rightharpoonup m_0 \quad \mbox{weakly in } H^1(\Omega; \hspace{0.03cm} \M)\notag\\
       \nabla m_\e &\wsts\nabla m_0(x) +M_1(x, \omega) \quad \mbox{weakly in } L^2(D\times\Omega; \hspace{0.03cm} \Rtt).\notag
   \end{align}
Moreover, 
    \begin{align}
        \label{liminfineq}
        \Ghom(m_0) \leq \liminf_{\e\to 0} \G_\e(m_\e),
    \end{align}
where $\Ghom$ is given by \eqref{eq:intro:Ghom}.
\end{thm}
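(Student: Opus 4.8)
The plan is to argue in three stages: first extract compactness, then establish the $\liminf$ inequality by reducing it to a pointwise (in $x$) minimization that matches the density $T_\hom$ from Proposition~\ref{prop:ThomGhom}. For the compactness part, from the lower bound of $a$ in \ref{asPara_Ex} and the quadratic-completion identity in \eqref{def:funncEx+DMI}, together with $|m_\e|$ being bounded (since $m_\e$ takes values in the bounded manifold $\M$) and the Birkhoff/mean-value property applied to $\kappa^2/a \in L^1(\Omega)$ to control the negative term $\int_\domain \frac{\kappa^2(T_{x/\e}\tomega)}{a(T_{x/\e}\tomega)}|m_\e|^2\dx$ on the typical trajectory $\tomega\in\wt\Omega$, one deduces $\sup_\e \|\nabla m_\e\|_{L^2(D)}<\infty$. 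Hence $\{m_\e\}$ is bounded in $H^1(D;\Rt)$, and up to a subsequence $m_\e\rightharpoonup m_0$ weakly in $H^1$, strongly in $L^2$ (Rellich), and $\nabla m_\e \wsts \nabla m_0 + M_1$ by Theorem~\ref{thm: compactnessH1}. Proposition~\ref{prop:compactness} then gives $m_0\in H^1(D;\M)$ and $M_1(x,\cdot)\in\Ltpot(\Omega;\T[m_0(x)]\M)$ for a.e.\ $x$.

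For the $\liminf$ inequality, the key is to rewrite $\G_\e$ in the completed-square form of \eqref{def:funncEx+DMI}. The second term $\int_\domain \frac{\kappa^2(T_{x/\e}\tomega)}{a(T_{x/\e}\tomega)}|m_\e(x)|^2\dx$ converges (using $m_\e\to m_0$ strongly in $L^2$, $|m_\e|^2\to|m_0|^2$ in $L^1$, and the mean-value property on $\wt\Omega_{\kappa^2/a\,\dP}$) to $\Ev[\kappa^2/a]\int_\domain |m_0|^2\dx$. For the first term, the integrand $\tfrac12 a(T_{x/\e}\tomega)|\nabla m_\e - \tfrac{\kappa}{a}(T_{x/\e}\tomega)\bchi(m_\e)|^2$ is a nonnegative quadratic in the two-scale limit: one applies lower semicontinuity of two-scale convergence for convex integrands. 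Concretely, localize using test functions, use that $\bchi(m_\e)\to\bchi(m_0)$ strongly in $L^2$ (hence strong two-scale by Remark~\ref{rem:2sL2vsL2}) while $\nabla m_\e \wsts \nabla m_0 + M_1$, and expand the square via the weak-strong principle (Proposition~\ref{prop:weak-strong conv}) for the cross terms together with weak-lower-semicontinuity for the pure $|\nabla m_\e|^2$ term against the weight $a(T_{x/\e}\tomega)$; this yields
\[
\liminf_{\e\to0}\int_\domain \tfrac12 a\bigl|\nabla m_\e - \tfrac{\kappa}{a}\bchi(m_\e)\bigr|^2\dx
\;\ge\;
\int_\domain\!\!\int_\Omega \tfrac12 a(\omega)\bigl|\nabla m_0(x) + M_1(x,\omega) - \tfrac{\kappa(\omega)}{a(\omega)}\bchi(m_0(x))\bigr|^2\dPo\dx .
\]
Since $M_1(x,\cdot)\in\Ltpot(\Omega;\T[m_0(x)]\M)$ is an admissible competitor in \eqref{eq:def:Thom}, the inner integral is bounded below by $T_\hom(m_0(x),\nabla m_0(x)) + \tfrac12\int_\Omega \frac{\kappa^2}{a}|\bchi(m_0(x))|^2\dPo$ for a.e.\ $x$ (recalling the constant split off in the proof of Proposition~\ref{prop:ThomGhom}); integrating and combining with the limit of the negative term — noting that $|\bchi(m_0(x))|^2$ relates to $|m_0(x)|^2$ appropriately so that the spurious constants cancel — gives $\liminf_\e \G_\e(m_\e)\ge \int_\domain T_\hom(m_0,\nabla m_0)\dx = \Ghom(m_0)$ by Proposition~\ref{prop:ThomGhom}.

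The main obstacle is the careful handling of the oscillating quadratic term with the $\e$-dependent weight $a(T_{x/\e}\tomega)$: one cannot simply pass to the limit, since $a(T_{\cdot/\e}\tomega)$ only converges weak-$\ast$ and $\nabla m_\e$ only two-scale weakly, so the cross term and the square term must be separated and treated by, respectively, the weak-strong principle and two-scale lower semicontinuity for the convex integrand $(\omega,\xi)\mapsto \tfrac12 a(\omega)|\xi|^2$. Technically this is most cleanly done by testing against $b\in C(\Omega)$, $\psi\in\Cic(D)$ to build up the full two-scale measure, exploiting that $a$ is only $L^\infty$ (so one approximates $a$ by continuous functions on $\Omega$, or invokes the quenched-two-scale lower semicontinuity result directly), and then passing from the localized inequalities to the global one by a supremum/partition argument. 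The identification of $M_1$ as living in the tangent bundle — already secured by Proposition~\ref{prop:compactness} — is what makes it an admissible competitor and is therefore essential to closing the argument at the optimal (tangential) value rather than at a strictly smaller one.
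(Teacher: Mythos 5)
Your proposal is correct and follows essentially the same route as the paper: compactness is obtained from the energy bound together with Theorem \ref{thm: compactnessH1} and Proposition \ref{prop:compactness}, and the liminf inequality from the quadratic structure of $\G_\e$, two-scale lower semicontinuity proved by testing against smooth admissible functions on the weighted typical trajectories of Remarks \ref{rem:typicaltrajectories} and \ref{rem:mean-value property}, and the admissibility of the tangential corrector $M_1(x,\cdot)\in\Ltpot(\Omega;\T[m_0(x)]\M)$ as a competitor in the cell formula $T_\hom$ of Proposition \ref{prop:ThomGhom}. The only (minor) organizational difference is that the paper keeps the completed square $\tfrac12 a_\e\bigl|\nabla m_\e-\tfrac{\kappa_\e}{a_\e}\bchi(m_\e)\bigr|^2$ intact and bounds it below via $|X-\Psi_n|^2\ge 0$ for a dense family of test fields $\Psi_n$, whereas you re-expand it and treat the Dirichlet, cross, and zero-order terms separately --- the cancellation of the ``spurious constants'' you mention is exactly the identity $|\bchi(s)|^2=2|s|^2$.
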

\begin{proof}
The compactness result is a direct consequence of Proposition \ref{prop:compactness}. To prove \eqref{liminfineq}, we write $\G_\e(m_\e)$ as the sum of two functionals ${\mathcal{H}}_\e$ and ${\mathcal{I}}_\e$ given by 
   \begin{align}
       \notag
       {\mathcal{H}}_\e(m_\e):= {1\over 2} \int_D a(T_{x/\e}\widetilde{\omega})\left|\nabla m_\e(x)-{\kappa(T_{x/\e}\widetilde{\omega})\over a(T_{x/\e}\widetilde{\omega})}\bchi(m_\e(x))\right|^2\dx ,
   \end{align}
and
   \begin{align}
       \notag
       {\mathcal{I}}_\e(m_\e):= - \int_D {\kappa^2(T_{x/\e}\widetilde{\omega})\over a(T_{x/\e}\widetilde{\omega})} |m_\e(x)|^2\dx .
   \end{align}
    Let $\{\psi_{0,n}\}_{n\in\N}\subset C^\infty(D; \R^3)$ and let $\{\psi_{1, n}\}_{n\in\N}$ be a sequence of elements in the linear span of admissible test functions in $C_c^{\infty}(D) \times C(\Omega; \hspace{0.03cm} \R^{3\times 3})$  for stochastic two-scale convergence.  We define 
        \begin{equation}
            \notag
            \Psi_n(x, \omega) \coloneqq \nabla \psi_{0,n}(x) + \psi_{1, n}(x, \omega) - {\kappa(\omega)\over a(\omega)}\bchi(\psi_{0, n}(x)).
        \end{equation}
    For every $\e>0$ and by the positivity of $a$,  we find 
      \begin{align}
      \notag
          {1\over 2}\int_D a(T_{x/\e}\tomega)\left| \nabla m_\e(x) -{\kappa(T_{x/\e}\tomega)\over a(T_{x/\e})\tomega} \chi(m_\e(x))  - \Psi_n(x, T_{x/\e}\tomega) \right|^2\dx>0.
      \end{align}
    Hence, 
    \begin{align}
           {\mathcal{H}}_\e(m_\e) 
           &\geq -{1\over 2} \int_D a(T_{x/\e}\widetilde{\omega})|\Psi_n(x,T_{x/\e}\widetilde{\omega})|^2\dx  \notag\\
           &\quad +\int_D \left(a(T_{x/\e}\widetilde{\omega})\nabla m_\e(x) - \kappa(T_{x/\e}\widetilde{\omega})\bchi (m_\e(x))\right) : \Psi_n(x,T_{x/\e}\widetilde{\omega})\dx .  \notag
       \end{align}  
    Since $\psi_{1, n}$ belongs to the linear span of admissible test functions and 
    since $\tomega$ is a typical trajectory with respect to $a\dP,\kappa\dP,\kappa^2/a\dP$, see Remark \ref{rem:typicaltrajectories},
    we can pass to the stochastic two-scale convergence, obtaining
   \begin{align}
       &\liminf_{\e\to 0} {\mathcal{H}}_\e(m_\e) \geq -{1\over 2} \int_D\int_{\Omega} a(\omega)|\Psi_n(x,\omega)|^2\dPo\dx \notag\\
       &\quad +\int_D\int_{\Omega} a(\omega) \left[\nabla m_0(x) + M_1(x, \omega)-{\kappa(\omega)\over a(\omega)} \bchi(m_0(x)) \right] : \Psi_n(x, \omega)\dPo\dx .\label{ineliminf1}
   \end{align}
Choosing $\psi_{0, n}$ and $\psi_{1,n}$ such that $\psi_{0, n}\to m_0$ in $H^1(D; \hspace{0.03cm} \R^3)$ and $\psi_{1, n}\to M_1$ in $L^2(D\times \Omega; \hspace{0.03cm} \R^{3\times 3})$, and taking the limit as $n\to\infty$ in \eqref{ineliminf1}, we deduce that 
   \begin{align}
        &\liminf_{\e\to 0} {\mathcal{H}}_{\e} (m_\e)  
        \geq
        {1\over 2}\int_D\int_\Omega a(\omega) 
            \left|\nabla m_0(x) + M_1(x,\omega) - {\kappa(\omega)\over a(\omega)}\bchi(m_0(x))\right|^2
        \dPo \dx\notag\\
        &\quad\geq 
        {1\over 2}\int_D \inf_{\Xi\in L^2(\Omega; \hspace{0.03cm} \T[m_0(x)]\M)} 
            \int_\Omega a(\omega) \left|\nabla m_0(x)+\Xi(\omega)
                -{\kappa(\omega)\over a(\omega)}\bchi(m_0(x))\right|^2
            \dPo \dx\notag\\
        &\quad\geq  
        \int_D \inf_{\Xi\in L^2(\Omega; \hspace{0.03cm} \T[m_0(x)]\M)} 
            \int_{\Omega} {1\over 2}a(\omega)|\nabla m_0(x) + \Xi(\omega)|^2 
                -\kappa(\omega) \bchi(m_0(x)):(\nabla m_0(x) + \Xi(\omega))
            \dPo \dx\notag\\
        &\qquad +{1\over 2}\Ev\left[{\kappa^2\over a}\right]\int_D |m_0(x)|^2\dPo \dx . \label{ineliminf2}      
   \end{align}
 Concerning ${\mathcal{I}}_\e(m_\e)$ we have 
\begin{align}
        \lim_{\e\to 0} {\mathcal{I}}_\e(m_\e) 
        = -{1\over 2}\Ev\left[{\kappa^2\over a}\right]\int_D|m_0(x)|^2\dx
        \eqqcolon {\mathcal{I}}_0(m_0),\label{ineliminf3}
\end{align}
since 
\begin{align*}
    |{\mathcal{I}}_\e(m_\e)- {\mathcal{I}}_0(m_0)|\leq |{\mathcal{I}}_\e(m_\e)-{\mathcal{I}}_\e(m_0)| + |{\mathcal{I}}_\e(m_0)-{\mathcal{I}}_0(m_0)| \to 0
\end{align*}
as $\e\to 0$ and $m_\e\to m_0$ strongly in $L^2(D; \hspace{0.03cm}\M)$.
The convergence of the first term is due to the bounds on $a,\kappa$ from \ref{asPara_Ex} and \ref{asPara_DMI}. 
The convergence of the second term holds due to the boundedness of $\M$ and since $\tomega$ is a typical trajectory with respect to $\kappa^2/a(\cdot)\dP(\cdot)$, see Remark \ref{rem:mean-value property}.
Combining \eqref{ineliminf2} and \eqref{ineliminf3}, via Proposition \ref{prop:ThomGhom} we obtain \eqref{liminfineq}.
\end{proof}

\subsection{The limsup inequality}
\label{subsect:limsupine}
In this subsection, we provide an upper bound for the asymptotic behavior of the family $(\G_\e)_{\e>0}$. 
To be precise, we show that the lower bound identified in Theorem \ref{thm:lowerbound} is optimal. 
\begin{thm}
Let $\M$ be a $C^2$ orientable hypersurface of $\Rt$ such that $\M$ has a tubular neighborhood of uniform thickness $\delta>0$.
Let $m_0\in H^1(\domain;\hspace{0.03cm}\M)$.
Then, there exists a sequence $\{m_\e\}_{\e>0}\subset H^1(\domain;\hspace{0.03cm}\M)$, such that, as $\e\to 0$,
\begin{align*}
    m_\e\rightharpoonup m_0
    \quad \text{weakly in }H^1(\domain;\hspace{0.03cm}\M),
\end{align*}
and
\begin{align*}
    \Ghom(m_0)\geq\limsup_{\e\rightarrow 0} \G_\e(m_\e),
\end{align*}
where the functional $\Ghom$ is given by \eqref{eq:intro:Ghom}.
\end{thm}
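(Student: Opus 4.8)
The plan is to construct the recovery sequence by the standard two-scale gluing procedure, with one additional (and crucial) ingredient: a projection step onto $\M$ that keeps the oscillating corrector tangent to the manifold. First I would fix $m_0 \in H^1(D;\M)$ and, using the density of smooth functions and a standard diagonalization, reduce to the case where $m_0 \in C^\infty(\overline{D};\M)$ (this is legitimate for the limsup inequality since $\Ghom$ is continuous along $H^1$-strong sequences in $H^1(D;\M)$, which follows from its explicit quadratic form in $\eqref{eq:explicit}$ together with the boundedness of the coefficients and of $\M$). Then I would pick, for the given $m_0$, the optimal corrector $\Xi[m_0(x),\nabla m_0(x)] = \Theta_a \nabla m_0(x) - \Theta_\kappa\bchi(m_0(x))\pi_{\T[m_0(x)]\M} \in L^2(D;\Ltpot(\Omega;\T[m_0(x)]\M))$ from Proposition \ref{prop:ThomGhom}, and approximate it in $L^2(D\times\Omega;\Rtt)$ by a finite sum of the form $\psi_1(x,\omega) = \sum_k \eta_k(x)\,\nablaomega b_k(\omega)$ with $\eta_k \in C^\infty_{\rm c}(D)$ and $b_k \in C^1(\Omega;\Rt)$; since $\Xi[m_0(x),\cdot]$ lies in $\Ltpot(\Omega;\T[m_0(x)]\M)$ for a.e.\ $x$, I can further arrange — by the same nearest-point projection trick used in the proof of Proposition \ref{prop:compactness} — that each $\nablaomega b_k(\omega)$ is, after projection, close to being tangent along $m_0$.

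The candidate sequence is then $\widetilde m_\e(x) := m_0(x) + \e\sum_k \eta_k(x)\, b_k(T_{x/\e}\tomega)$, which converges to $m_0$ strongly in $L^2$ and satisfies $\nabla\widetilde m_\e \wsts \nabla m_0(x) + \psi_1(x,\omega)$ in $L^2(D\times\Omega;\Rtt)$ by construction, using that $\tomega$ is a typical trajectory and the mean-value property (Proposition \ref{prop:mean-value property}); moreover $\{\widetilde m_\e\}$ is bounded in $H^1(D;\R^3)$. However $\widetilde m_\e$ does not take values in $\M$, so I project: using the tubular neighborhood of uniform thickness $\delta>0$, for $\e$ small enough $\widetilde m_\e(x)$ lies in the $\delta$-neighborhood of $\M$ (because $\|\widetilde m_\e - m_0\|_{L^\infty}$ is small — here the smoothness of $m_0$ and of the $\eta_k$, $b_k$ helps), and I set $m_\e := \pi_\M \circ \widetilde m_\e$ where $\pi_\M$ is the nearest-point projection. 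Then $m_\e \in H^1(D;\M)$, still $m_\e \rightharpoonup m_0$ in $H^1(D;\M)$, and by the chain rule $\nabla m_\e(x) = \nabla\widetilde m_\e(x)\, D\pi_\M(\widetilde m_\e(x))^\intercal$; since $D\pi_\M(m_0(x))$ is exactly the orthogonal projection onto $\T[m_0(x)]\M$ and $\widetilde m_\e \to m_0$ uniformly, the two-scale limit of $\nabla m_\e$ is $\big(\nabla m_0(x) + \psi_1(x,\omega)\big)\pi_{\T[m_0(x)]\M}$, which — because $(\nabla m_0(x))^\intercal \in \bT[m_0(x)]\M$ and $\psi_1$ was chosen tangent — equals $\nabla m_0(x) + \psi_1(x,\omega)$ up to the $L^2$-error from the projection of $\psi_1$.

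Granting this, I compute $\limsup_\e \G_\e(m_\e)$ using the representation $\eqref{def:funncEx+DMI}$: the quadratic term is handled by strong-weak two-scale convergence (Proposition \ref{prop:weak-strong conv}) applied to the oscillating coefficient $a(T_{x/\e}\tomega)$ against $|\nabla m_\e - \tfrac{\kappa}{a}\bchi(m_\e)|^2$ — here I need that $\nabla m_\e$ in fact \emph{strongly} two-scale converges, which holds because I built it from an essentially optimal finite sum and the projection is a smooth map; the lower-order term $-\int_D \tfrac{\kappa^2}{a}(T_{x/\e}\tomega)|m_\e|^2\dx$ converges to $-\tfrac12\Ev[\kappa^2/a]\int_D|m_0|^2\dx$ by the mean-value property and $|m_\e|=1$. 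Passing to the limit and then letting the approximation parameter go (recovering $\Xi[m_0,\nabla m_0]$ from the finite sums $\psi_1$), the resulting value is exactly $\int_D T_\hom(m_0(x),\nabla m_0(x))\dx = \Ghom(m_0)$ by Proposition \ref{prop:ThomGhom}; a final diagonal argument removes the reduction to smooth $m_0$. The main obstacle I expect is the projection step: ensuring that $\nabla m_\e$ two-scale converges to exactly the tangential object $\nabla m_0 + \Xi[m_0,\nabla m_0]$ (and not merely to $(\nabla m_0+\psi_1)\pi_{\T\M}$ with an uncontrolled error) requires carefully combining the uniform smallness $\|\widetilde m_\e - m_0\|_{L^\infty}\to 0$, the $C^1$-regularity of $\pi_\M$ on the tubular neighborhood, and the fact — established in Proposition \ref{prop:compactness} and Proposition \ref{prop:ThomGhom} — that the optimal corrector is genuinely tangent, so that the $\pi_{\T[s]\M}$-absorption identity $\bchi(s)\pi_{\T[s]\M}=\bchi(s)$ (for $\M=\St$) or the tangency of $\Xi[s,A]$ (in general) exactly cancels the defect.
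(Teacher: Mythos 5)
Your construction is, at its core, the same as the paper's: perturb $m_0$ by $\e$ times a finite sum of oscillating test functions, push back onto $\M$ with the nearest-point projection $\pi_\M$ afforded by the uniform tubular neighborhood, identify the stochastic two-scale limit of $\nabla m_\e$, and then approximate the optimal corrector $\Xi[m_0,\nabla m_0]$ and conclude by a diagonal argument. Two remarks on where you deviate. First, your worry about forcing the approximants $\nablaomega b_k$ to be tangent along $m_0$ is unnecessary: the chain-rule factor $\nabla\pi_\M[\wt m_\e]$ converges uniformly to $\nabla\pi_\M[m_0]$, which acts as the identity on $\T[m_0(x)]\M$ (this is exactly Lemma \ref{lem:limsup:TM piM interaction}), so one may take arbitrary, non-tangent test fields in the intermediate step and invoke tangency only for the \emph{limit} object $\Xi$, for which $\Xi\,\nabla\pi_\M[m_0]=\Xi$ holds exactly. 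This is cleaner than projecting each $b_k$.

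The genuine gap is your opening reduction to $m_0\in C^\infty(\ov D;\M)$ ``by density.'' Smooth $\M$-valued maps are in general \emph{not} dense in $H^1(\domain;\M)$ for a three-dimensional domain: for the main case $\M=\St$ one has $\pi_2(\St)=\Z\neq 0$, and by Bethuel's theorem maps such as $x/|x|$ on a ball cannot be approximated in $H^1$ by smooth (or even continuous) $\St$-valued maps. So this step, as stated, fails, and no diagonal argument can remove it afterwards. Fortunately it is also superfluous: the only place you use smoothness of $m_0$ is to guarantee $\|\wt m_\e-m_0\|_{L^\infty}\to 0$, but since $\wt m_\e-m_0=\e\sum_k\eta_k b_k$ with $\eta_k$ and $b_k$ bounded, this smallness is of order $\e$ uniformly and independently of any regularity of $m_0$; since $m_0$ takes values in $\M$, the perturbed map lands in the tubular neighborhood for all small $\e$. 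Dropping the density reduction and running your argument directly for arbitrary $m_0\in H^1(\domain;\M)$ — which is precisely what the paper does — repairs the proof.
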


\begin{proof}
 Let $U_\delta$ be a tubular neighborhood of size $\delta$ around $\M$.
 Denote by $\pi_\M\colon U_\delta\rightarrow\M$ the pointwise projection onto the manifold. 
 Since $\M$ is $C^2$, we have $\pi_\M\in C^1(U_\delta; \hspace{0.03cm}\M)$. In particular, we choose $\delta>0$ small enough such that  $\pi_\M\in C^1(\ov{U}_\delta; \hspace{0.03cm}\M)$.
 We split the proof into two steps.\\
 
 \underline{Step 1: Recovery sequence for smooth 2-scale  perturbations of $m_0$.}
 For  $i=1,\ldots,n$, with $n\in\N$,  consider $\vphi_i\in C^\infty_{c}(\domain;\hspace{0.03cm}\Rtt)$, $b_i\in C^1(\Omega;\hspace{0.03cm}\Rt)$ for $i=1,\ldots,n$. 
  For every $\e>0$ and for almost every $x\in D$, we set
 \begin{align}
     \widetilde{m}_\e(x)   &\coloneqq m_0(x)+\e\sum_{i=1}^n\vphi_i(x) b_i(T_{x/\e}\widetilde{\omega}),\notag\\
     m_\e           &\coloneqq \pi_\M\circ\wt{m}_\e\in H^1(\domain; \hspace{0.03cm}\M).\notag
 \end{align}
For $\e>0$ sufficiently small , we find $\wt{m}_\e(x)\in U_\delta$ for almost every $x\in\domain$.
 Additionally, as $\e\rightarrow 0$,
 \begin{align}
 \label{strongconvergein L^2}
     \wt{m}_\e,m_\e\rightarrow m_0   \quad\text{strongly in }L^2(\domain; \hspace{0.03cm}\Rt).
 \end{align}
Thanks to the regularity of $\pi_\M$, there exists a positive constant $C_\M$ depending only on $\M$ such that 
 \begin{align*}
   |\nabla m_\e|\leq  C_\M|\nabla \wt{m}_\e|    \quad\text{a.e. in }\domain.
 \end{align*}
To be precise, a direct computation shows that 
 \begin{align}
     (\nabla \wt{m}_\e(x))_{jk} 
     &= 
     (\nabla m_0(x))_{jk} 
     + \e\sum_{i=1}^n\sum_{\ell=1}^3 b_{i,\ell}^\intercal(T_{x/\e}\wt{\omega})\partial_{x_j}\vphi_{i,k\ell}(x)
     + \sum_{i=1}^n (\nablaomega b_i(T_{x/\e}\wt{\omega})\vphi_i^\intercal(x))_{jk},\notag\\
     \nabla m_\e(x) 
     &= 
     \nabla \wt{m}_\e(x)\nabla \pi_\M[\wt{m}_\e(x)].\notag
 \end{align}
Since $\{\wt{m}_\e\}_\e$ is bounded in $H^1(\domain; \hspace{0.03cm}\Rtt)$, the sequence $\{m_\e\}_\e$ is bounded in $H^1(\domain; \hspace{0.03cm}\Rtt)$ as well. This, combined with \eqref{strongconvergein L^2} implies that, up to the extraction of a non-relabelled subsequence, 
 \begin{align*}
     \wt{m}_\e,m_\e\rightharpoonup m_0   \quad\text{weakly in }H^1(\domain; \hspace{0.03cm}\Rt).
 \end{align*}
 Since $m_\e$ converges uniformly to $m_0$ and $\nabla\pi_\M$ is uniformly continuous as a continuous function on the compact set $\ov{U}_\delta$, we also infer
 \begin{align}
 \notag
     \nabla\pi_\M[\wt{m}_\e]\rightarrow\nabla\pi_\M[m_0] 
     \quad\text{strongly in }L^\infty(\domain; \hspace{0.03cm}\Rtt).
 \end{align}
 On the one hand, we deduce that, for all $j,k,\ell=1,\ldots,3$, $i=1,\ldots,n$,
 \begin{align*}
     \e \big(b_{i,\ell}^\intercal(T_{x/\e}\wt{\omega})\partial_{x_j}\vphi_{i,k\ell}(x)\big)_{jk}
     \rightarrow
     0
     \quad\text{strongly in }L^\infty(\domain).
 \end{align*}
 On the other hand, since $\tomega$ is a typical trajectory, we find 
 \begin{align*}
     \nablaomega b_i(T_{x/\e}\wt{\omega})\vphi_i^\intercal(x)
     \sts
     \nablaomega b_i(\omega)\vphi_i^\intercal(x)
     \quad\text{strongly in }L^2(\domain\times\Omega; \hspace{0.03cm}\Rtt).
 \end{align*}
  where we have applied the mean value property (see Proposition \ref{prop:mean-value property}) to 
 $\nablaomega b_i(T_{x/\e}\wt{\omega})\vphi_i^\intercal(x)$ for the weak and to 
 $|\nablaomega b_i(T_{x/\e}\wt{\omega})|^2|\vphi_i(x)|^2$ for the strong stochastic 2-scale convergence. 
 Hence, since  $\nabla m_0\nabla\pi_\M[m_0] = \nabla m_0$ since $\pi_\M[m_0]=m_0$, we conclude that 
 \begin{align*}
     \nabla m_\e(x) 
     \sts 
     \nabla m_0(x)+\sum_{i=1}^n \nablaomega b_i(\omega)\vphi_i^\intercal(x)\nabla\pi_\M[m_0(x)]
     \quad\text{strongly in }L^2(\domain\times\Omega; \hspace{0.03cm}\Rtt).
 \end{align*}
Applying Proposition \ref{prop:weak-strong conv} and Remark \ref{rem:typicaltrajectories} yields
 \begin{align}
     &\lim_{\e\rightarrow 0}\G_\e(m_\e)
     =
     \lim_{\e\rightarrow 0}\bigg(\frac{1}{2} \int_\domain a(T_{x/\e}\tomega)|\nabla m_\e(x)|^2\dx
     -\int_\domain\kappa(T_{x/\e}\tomega)\bchi(m_\e(x))\colon\nabla m_\e(x)\dx\bigg)\notag\\
     &\quad=
     \frac{1}{2} \int_{\domain\times\Omega} a(\omega)\bigg|\nabla m_0(x)+\sum_{i=1}^n \nablaomega b_i(\omega)\vphi_i^\intercal(x)\nabla\pi_\M[m_0(x)]\bigg|^2\dPo \dx\notag\\
     &\qquad-\int_{\domain\times\Omega} \kappa(\omega)\bchi(m_0(x))
        \colon\bigg(\nabla m_0(x)+\sum_{i=1}^n \nablaomega b_i(\omega)\vphi_i^\intercal(x)\nabla\pi_\M[m_0(x)]\bigg)\dPo \dx.\label{eq:limsup:limGeme}
 \end{align}

 \underline{Step 2: Recovery sequence for perturbations of $m_0$ with the correctors.} 
 Let $m_0\in H^1(\domain;\hspace{0.03cm}\M)$. 
 Based on Proposition \ref{prop:ThomGhom} and \eqref{eq:ThomMinimzer}, we want the perturbations to result in the gradient correction 
 \begin{equation}\label{eq:limsup:defXi}
     \Xi(x,\omega) \coloneqq 
     \Theta_a(\omega)\nabla m_0(x) - \Theta_\kappa(\omega)\bchi(m_0(x))\pi_{\T[s]\M}
 \end{equation}
 with $\Xi(x,\cdot)\in\Ltpot(\Omega;\hspace{0.03cm}\T[m_0(x)]\M)$ for almost every $x\in\domain$.
 Let $\{\vphi_{a,\delta}\}_{\delta>0},\{\vphi_{\kappa,\delta}\}_{\delta>0}\subset C^\infty_c(\domain;\Rtt)$ be such that
 \begin{align*}
     \vphi_{a,\delta}^\intercal         &\xrightarrow{\delta\rightarrow 0}      \nabla m_0 
     \quad\text{strongly in }L^2(\domain;\hspace{0.03cm}\Rtt),\\
     \vphi_{\kappa,\delta}^\intercal    &\xrightarrow{\delta\rightarrow 0}      \bchi(m_0(x))\pi_{\T[s]\M}
     \quad\text{strongly in }L^2(\domain;\hspace{0.03cm}\Rtt).\\
 \intertext{By the definition of $\Ltpot(\Omega;\Rt)$ there exist sequences $\{b_{a,\delta}\}_{\delta>0},\{b_{\kappa,\delta}\}_{\delta>0}\subset C^1(\Omega;\Rt)$ with }
     \nablaomega b_{a,\delta}           &\xrightarrow{\delta\rightarrow 0}      \Theta_a 
     \quad\text{strongly in }\Ltpot(\Omega;\hspace{0.03cm}\Rt),\\
     \nablaomega b_{\kappa,\delta}      &\xrightarrow{\delta\rightarrow 0}      \Theta_\kappa 
     \quad\text{strongly in }\Ltpot(\Omega;\hspace{0.03cm}\Rt).
 \end{align*}
 To summarize,
 \begin{align}
   \notag     \Psi_\delta\coloneqq 
     \nablaomega b_{a,\delta}\vphi_{a,\delta}^\intercal
     +\nablaomega b_{\kappa,\delta}\vphi_{\kappa,\delta}^\intercal
     \xrightarrow{\delta\rightarrow 0} \Xi(x,\omega) 
     \quad\text{strongly in }L^2(\domain;\hspace{0.03cm}\Ltpot(\Omega;\hspace{0.03cm}\Rt)).
 \end{align}
 In the spirit of Step 1, for fixed $\delta>0$, we define the sequence $\{m_\e^\delta\}_{\e>0}$ by
 \begin{align*}
     m_\e^\delta \coloneqq \pi_\M\big[m_0+\e(\vphi_{a,\delta}b_{a,\delta}+\vphi_{\kappa,\delta}b_{\kappa,\delta})\big].
 \end{align*}
 According to \eqref{eq:limsup:limGeme}, we have that 
 \begin{align*}
     \lim_{\e\rightarrow 0}\G_\e(m_\e^\delta)
     = F(m_0,\Psi_\delta),
 \end{align*}
 where $F(m_0,\Psi)$ for $\Psi\in L^2(D; L^2_\pot(\Omega;\hspace{0.03cm} \Rt))$ is given by
 \begin{multline*}
     F(m_0,\Psi)\coloneqq 
     \frac{1}{2} \int_{\domain\times\Omega} a(\omega)\bigg|\nabla m_0(x)+\Psi(x,\omega)\nabla\pi_\M[m_0(x)]\bigg|^2\dPo \dx\\
     -\int_{\domain\times\Omega} \kappa(\omega)\bchi(m_0(x))
        \colon\bigg(\nabla m_0(x)+\Psi(x,\omega)\nabla\pi_\M[m_0(x)]\bigg)\dPo \dx.
 \end{multline*}
 Additionally, from Proposition \ref{prop:ThomGhom} and by the definition of $\Xi$ in \eqref{eq:limsup:defXi} we have 
 \begin{align}
 \notag
     F(m_0,\Xi)=\Ghom(m_0).
 \end{align}
 This equality is a consequence of the fact that $\Xi(x,\omega)\nabla\pi_\M[m_0(x)]=\Xi(x,\omega)$, which we prove in Lemma \ref{lem:limsup:TM piM interaction} below. 
 Now, due to the strong $L^2$-convergence of $(\Psi_\delta)_{\delta>0}$, there holds
 \begin{align*}
     \lim_{\delta\rightarrow 0} F(m_0,\Psi_\delta)
     =F(m_0,\Xi)=\Ghom(m_0).
 \end{align*}
The thesis follows then by Step 1.
 \end{proof}

\begin{lemma}\label{lem:limsup:TM piM interaction}
Let $\M$ be a $C^2$ orientable hypersurface of $\Rt$ with a tubular neighborhood $U_\delta$ of size $\delta$.
Let $\pi_\M\in C^1(U_\delta;\M)$ be the pointwise projection onto the manifold.
Then for every $s\in\M$ and every $\tau\in\T_s\M$,
\begin{align*}
    \tau^\intercal\nabla\pi_\M[s]=\tau^\intercal.
\end{align*}
\end{lemma}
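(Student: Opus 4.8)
The plan is to differentiate the identity $\pi_\M\circ\pi_\M=\pi_\M$ on the tubular neighborhood, or equivalently to use that $\pi_\M$ restricted to $\M$ is the identity map, and then read off the action of $\nabla\pi_\M[s]$ on the tangent space $\T_s\M$. First I would recall that, since $\M$ is a $C^2$ orientable hypersurface with a tubular neighborhood $U_\delta$ of uniform thickness, the nearest-point projection $\pi_\M\colon U_\delta\to\M$ is $C^1$, and that its differential $\mathrm{d}\pi_\M(s)\colon\Rt\to\T_s\M$ at a point $s\in\M$ is precisely the orthogonal projection of $\Rt$ onto $\T_s\M$; in the matrix notation of the paper this means $\nabla\pi_\M[s]=\pi_{\T_s\M}$ for $s\in\M$, where we have to be a little careful about transposes since the paper writes $\nabla m$ for the transpose of the Jacobian. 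The cleanest route is via the chain rule: for any $C^1$ curve $c(t)$ lying in $\M$ with $c(0)=s$, $\dot c(0)=\tau\in\T_s\M$, differentiating $\pi_\M(c(t))=c(t)$ at $t=0$ gives $\mathrm{d}\pi_\M(s)\tau=\tau$. This is exactly the claim $\tau^\intercal\nabla\pi_\M[s]=\tau^\intercal$ once one unwinds the convention that $\nabla\pi_\M[s]$ denotes the transpose of the Jacobian of $\pi_\M$ at $s$, so that $\mathrm{d}\pi_\M(s)\tau=(\nabla\pi_\M[s])^\intercal\tau=((\tau^\intercal\nabla\pi_\M[s])^\intercal$, and $\mathrm{d}\pi_\M(s)\tau=\tau$ becomes $\tau^\intercal\nabla\pi_\M[s]=\tau^\intercal$.

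The key steps in order: (i) fix $s\in\M$ and $\tau\in\T_s\M$; since $\T_s\M$ is the tangent space of the $C^2$ manifold $\M$ at $s$, there is a $C^1$ curve $c\colon(-\eta,\eta)\to\M$ with $c(0)=s$ and $c'(0)=\tau$; (ii) because $\pi_\M$ is the identity on $\M$, we have $\pi_\M(c(t))=c(t)$ for all $t$ with $c(t)\in\M$, i.e.\ for all $t\in(-\eta,\eta)$; (iii) differentiate this identity at $t=0$ using the chain rule, recalling $\pi_\M\in C^1(U_\delta;\M)$, to obtain $\mathrm{d}\pi_\M(s)[c'(0)]=c'(0)$, that is $\mathrm{d}\pi_\M(s)\tau=\tau$; (iv) translate back to the paper's transpose convention: $\mathrm{d}\pi_\M(s)\tau=(\nabla\pi_\M[s])^\intercal\tau$, so the previous line reads $(\nabla\pi_\M[s])^\intercal\tau=\tau$; taking transposes and using that $\tau$ is a column vector yields $\tau^\intercal\nabla\pi_\M[s]=\tau^\intercal$, which is the assertion.

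I do not expect any serious obstacle here — the statement is essentially the elementary fact that the differential of a retraction onto a submanifold acts as the identity on tangent vectors. The only points requiring minor care are bookkeeping ones: making sure the transpose convention of the paper (the use of $\nabla m$ for the transposed Jacobian, and of $\xi^\intercal\in\bT_s\M$) is handled consistently, and noting that one needs $\pi_\M$ to be $C^1$ up to (or at least on) $\M$ — which is guaranteed by the uniform-thickness tubular neighborhood hypothesis — so that the chain rule applies at interior points of $U_\delta$ containing $\M$. One could alternatively avoid curves entirely by differentiating $\pi_\M\circ\pi_\M=\pi_\M$ on $U_\delta$ and evaluating at $s\in\M$, giving $\mathrm{d}\pi_\M(s)\circ\mathrm{d}\pi_\M(s)=\mathrm{d}\pi_\M(s)$ together with $\mathrm{range}(\mathrm{d}\pi_\M(s))\subseteq\T_s\M$ and $\dim\T_s\M=2$ forcing $\mathrm{d}\pi_\M(s)|_{\T_s\M}=\mathrm{Id}$; but the curve argument is shorter and more transparent, so that is the route I would write up.
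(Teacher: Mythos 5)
Your proof is correct and is essentially the same as the paper's: both arguments reduce the claim to the elementary fact that the differential of the retraction $\pi_\M$ acts as the identity on $\T_s\M$, obtained by differentiating $\pi_\M$ at $s$ in the direction $\tau$. The only cosmetic difference is that the paper differentiates along the straight segment $h\mapsto s+h\tau$ using the expansion $\pi_\M(s+h\tau)=s+h\tau+\mathcal{O}(h^2)$, whereas you differentiate the identity $\pi_\M\circ c=c$ along a $C^1$ curve $c$ lying in $\M$; your transpose bookkeeping matches the paper's convention.
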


\begin{proof}
Let $s\in\M$ and $\tau\in\T_s\M$.
By the definition of the tangent space and in view of the regularity of $\M$, we have that 
\begin{align*}
    \pi_\M(s+h\tau) = s+h\tau + \mathcal{O}(h^2)
    \quad\text{as }h\rightarrow 0.
\end{align*}
Now, the claim follows from the following equality
\begin{align*}
    \tau^\intercal\nabla\pi_\M[s]
    =\frac{\mathrm{d}}{\mathrm{d}h}\bigg(\pi_\M(s+h\tau)\bigg|_{h=0}\bigg)^\intercal
    = \tau^\intercal,
\end{align*}
which concludes the proof.
\end{proof}

\section{Proof of the main result}\label{section:ProofMainResult}
This section is devoted to completing the proof of Theorem \ref{mainthm}. 
With Theorem \ref{thm:GeGammaConv} proven in Section \ref{section:GammaresultExDMI}, in this section it remains to prove 
    Proposition \ref{prop:intro:Whom}, i.e. the characterization of the limiting behaviour of the magnetostatic self-energy functionals $\W_\e$ (Subsection \ref{section:demagnefield}),
    Proposition \ref{prop:intro:AhomZehom}, i.e. the study of the asymptotic behavior of the anisotropic and Zeeman energy (Subsection \ref{section:Aniso+Zeeman}), 
    and the equi-coerciveness of the overall family $(\F_\e)_{\e>0}$. 

\subsection{Homogenization of the demagnetizing field}
\label{section:demagnefield}
In this section, we show that the family of magnetostatic self-energy functionals $\W_\e$ continuously converges to the energy $\W_\hom$ defined in \eqref{eq:intro:def:Whom}.  
To this end,  we recall the definition of Beppo-Levi spaces. 
Set $\theta(x) := (1+|x|^2)^{-1/2}$ and let $L^2_\theta (\R^3)$ be the weighted Lebesgue space
    $L^2_\theta(\R^3) := \{u\in {\mathcal{D}}'(\R^3)\hspace{0.03cm}:\hspace{0.03cm} u\theta\in L^2(\R^3) \}$. 
The Beppo-Levi space $\BL(\R^3)$ is defined as
    \begin{equation}
      \notag
      \BL(\R^3):= \left\{ u\in {\mathcal{D}}'(\R^3) \hspace{0.03cm} : \hspace{0.03cm} u\in L^2_\theta (\R^3) \quad\mbox{and}\quad \nabla u \in L^2(\R^3; \hspace{0.03cm} \R^3) \right\}.
    \end{equation}
The space $\BL(\Rt)$ endowed with $(u, v)_{\BL(\Rt)}:= (\nabla u, \nabla v)_{L^2(\Rt)}$ is a Hilbert space due to the Hardy inequality. \\ 
Recall that a given magnetization $m\in L^2(\Rt; \hspace{0.03cm} \Rt)$ generates the stray field $h_d[m] = \nabla u_m$, where the potential $u_m$ solves 
       \begin{equation}
           \label{prob um}
           \Delta u_m = -\div (m) \quad \mbox{in } {\mathcal{D}}'(\Rt).
       \end{equation} 
By Lax-Milgram's theorem, there exists a unique solution to the variational formulation associated to \eqref{prob um}: namely, a potential $u_m\in \BL(\Rt)$ such that, for all $\varphi\in\BL(\Rt)$,
    \begin{equation}
    \label{varformum}
        (u_m, \varphi)_{\BL(\Rt)} := \int_{\Rt} \nabla u_m\cdot\nabla\varphi \dx = -\int_{\Rt} m\cdot \nabla \varphi \dx.
    \end{equation}
Additionally, we have
    \begin{equation}
    \label{stabilityesti}
        \|u_m\|_{BL(\Rt)}\equiv \|\nabla u_m\|^2_{L^2(\Rt,\Rt}) \leq \sup_{\substack{ \varphi\in\BL(\Rt),\\ \|\varphi\|_{L^2(\Rt)}=1}} \left|\int_{\Rt} m\cdot \nabla \varphi\, dx \right|\leq \|m\|_{L^2(\Rt)}.
    \end{equation}
The family of magnetostatic self-energy functionals $\W_\e: L^2(D; \hspace{0.03cm} \M)\to \R$ is given by 
   \begin{equation}
      \notag 
       \W_\e (m):= -\frac{\mu_0}{2}\left(h_d\left[\Ms\left(T_{\cdot/\e}\tomega\right) m\chi_D\right], \Ms\left(T_{\cdot/\e}\tomega\right) m\right)_{L^2(D)}
   \end{equation}
for $\tomega\in\wt{\Omega}$ as in Remark \ref{rem:typicaltrajectories}, where $m\chi_D$ denotes the extension of $m$ to $\Rt$ vanishing outside $D$.
We will prove the $\Gamma$-continuous convergence of $\W_\e$ via an application of Proposition \ref{prop:weak-strong conv}, the weak-strong convergence principle for stochastic two-scale convergence.
However, we will first need to transfer some basic stochastic two-scale convergence results to the weighted spaces $L^2_\theta$ and $\BL$. 
In particular, we provide compactness results for bounded sequences in these spaces, which are an adaption of \cite[Propositions 4.1 and 4.2]{ADF15} to the stochastic setting.

\begin{prop}[Weighted stochastic 2-scale compactness in $L^2$]
\label{prop:stoc2scL2theta}
Let $\{u_\e\}_{\e>0}$ be a bounded sequence in $L^2_\theta(\Rt)$. Let $\wt{\omega}\in\wt{\Omega}$. 
Then, there exists a function $u\in L^2_\loc(\Rt\times \Omega)$ such that $\Ev[u]\in L^2_\theta(\Rt)$ and, up to subsequences, 
\begin{equation*}
    \lim_{\e\to 0} \int_{\Rt} u_\e(x)\vphi(x) b(T_{x/\e}\wt{\omega})\dx
    = \int_{\Rt\times\Omega} u(x,\omega)\vphi(x)b(\omega)\dPo \dx,
\end{equation*}
for all $\vphi\in \Cic(\Rt)$ and $b\in C(\Omega)$. 
\end{prop}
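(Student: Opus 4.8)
The plan is to reduce the statement to the unweighted stochastic two-scale compactness of Theorem~\ref{thm:L2cpt} by localizing to balls, and then to recover the weighted integrability of the mean via weak lower semicontinuity. This follows the scheme of \cite[Proposition~4.1]{ADF15} in the periodic case, carried over to the quenched setting with the trajectory $\wt\omega\in\wt\Omega$ fixed throughout.

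First I would localize. For $k\in\N$ let $B_k\subset\Rt$ denote the open ball of radius $k$ about the origin. Since $\theta(x)=(1+|x|^2)^{-1/2}\ge(1+k^2)^{-1/2}>0$ on $B_k$, the hypothesis $\sup_{\e>0}\|u_\e\theta\|_{L^2(\Rt)}\le C$ forces $\{u_\e|_{B_k}\}_{\e>0}$ to be bounded in $L^2(B_k)$, so Theorem~\ref{thm:L2cpt} applies on $B_k$. Extracting successively over $k=1,2,\dots$ and passing to a diagonal subsequence (not relabelled), I obtain $u^{(k)}\in L^2(B_k\times\Omega)$ such that $u_\e|_{B_k}$ weakly two-scale converges to $u^{(k)}$ for every $k$. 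To glue these limits, for $\psi\in C^\infty_c(B_k)$ (extended by zero, so $\psi\in C^\infty_c(B_{k+1})$) and $b\in C(\Omega)$ the identity of Definition~\ref{def:wstsconve} holds along the same subsequence on both $B_k$ and $B_{k+1}$, which gives $\int_{B_k\times\Omega}\big(u^{(k)}-u^{(k+1)}\big)\psi\, b\,\dPo\dx=0$; since the products $\psi\otimes b$ are total in $L^2(B_k\times\Omega)$, we get $u^{(k+1)}|_{B_k\times\Omega}=u^{(k)}$ a.e., and the $u^{(k)}$ patch together into a single $u\in L^2_\loc(\Rt\times\Omega)$. The convergence statement is then immediate: given $\vphi\in\Cic(\Rt)$ and $b\in C(\Omega)$, choose $k$ large enough that $\vphi\in C^\infty_c(B_k)$, so that
\begin{align*}
\lim_{\e\to 0}\int_{\Rt}u_\e(x)\vphi(x)b(T_{x/\e}\wt\omega)\dx
&=\int_{B_k}\int_\Omega u^{(k)}(x,\omega)\vphi(x)b(\omega)\dPo\dx\\
&=\int_{\Rt}\int_\Omega u(x,\omega)\vphi(x)b(\omega)\dPo\dx.
\end{align*}

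It remains to verify $\Ev[u]\in L^2_\theta(\Rt)$. By Remark~\ref{rem:2sL2vsL2}, weak two-scale convergence on $B_k$ implies $u_\e\rightharpoonup\Ev[u]|_{B_k}$ weakly in $L^2(B_k)$; multiplying by the bounded function $\theta$ preserves weak convergence, so weak lower semicontinuity of the $L^2$-norm gives
\begin{align*}
\int_{B_k}|\Ev[u](x)|^2\theta(x)^2\dx
&\le\liminf_{\e\to 0}\int_{B_k}|u_\e(x)|^2\theta(x)^2\dx\\
&\le\liminf_{\e\to 0}\|u_\e\|^2_{L^2_\theta(\Rt)}\le C.
\end{align*}
Letting $k\to\infty$ and invoking the monotone convergence theorem yields $\|\Ev[u]\|^2_{L^2_\theta(\Rt)}\le C$, hence $\Ev[u]\in L^2_\theta(\Rt)$.

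I expect the only genuinely delicate point to be the consistency of the localized two-scale limits under the diagonal extraction, i.e.\ the identification $u^{(k+1)}|_{B_k}=u^{(k)}$; the remaining steps are a routine adaptation of the Beppo-Levi compactness arguments of \cite{ADF15} to the quenched stochastic framework.
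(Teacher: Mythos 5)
Your proof is correct and follows essentially the same route as the paper: localize to an exhaustion of $\Rt$ by bounded sets (where the weight $\theta$ is bounded below), apply Theorem~\ref{thm:L2cpt} on each piece, diagonalize, and glue the local two-scale limits by uniqueness. The only (immaterial) difference is in the last step: the paper first extracts a weak limit $u_\infty$ of $\{u_\e\}$ in $L^2_\theta(\Rt)$ and identifies $\Ev[u]\equiv u_\infty$ via Remark~\ref{rem:2sL2vsL2}, whereas you obtain $\Ev[u]\in L^2_\theta(\Rt)$ directly from weak lower semicontinuity of the norm on each ball followed by monotone convergence; both arguments are valid.
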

We say that a sequence $\{u_\e\}_{\e>0}$ as in Proposition \ref{prop:stoc2scL2theta} stochastically $L^2_\theta$-two-scale converges to $u$.

\begin{proof}
The boundedness of $\{u_\e\}_\e$ in $L^2_\theta(\Rt)$ implies the existence of $u_\infty\in L^2_\theta(\Rt)$ and of a subsequence $\{u_{\e_n}\}_{n\in\N}$ such that 
     \begin{equation}
         \notag
         u_{\e_n}\rightharpoonup u_\infty \quad \mbox{weakly in } L^2_\theta(\Rt).
     \end{equation}
In particular, for every bounded domain $D\in\Rt$,  we have that $u_{\e_n}\rightharpoonup u_\infty$ weakly in $L^2(D)$.\\ 
Let $(D_i)_{i\in\mathbb{N}}$ be a sequence of bounded domains covering $\Rt$. 
An application of Theorem \ref{thm:L2cpt} to domain $D_1$  yields to the existence of a subsequence $\{u_{\e_{n(k_1)}}\}$ of $\{u_{\e_n}\}$  and $u_1\in L^2(D_1\times \Omega)$ such that 
   \begin{equation}
       \notag
       u_{\e_{n(k_1)}}\wsts\,  u_1 \quad \mbox{in } L^2(D_1\times \Omega).
   \end{equation}
This subsequence $\{u_{\e_{n(k_1)}}\}$ is again bounded in $L^2(D_2)$, hence there exists a subsequence $\{u_{\e_{n(k_2)}}\}$ such that $u_{\e_{n(k_2)}}$ stochastic two-scale converges to some $u_2\in L^2(D_2\times \Omega)$. 
In addition, the uniqueness of the two-scale limits implies that $u_1$ and $u_2$ coincide on $(D_1\cap D_2)\times \Omega$. 
Repeating the same arguments for any $D_i$, with $i\in\N$, we conclude the existence of a subsequence $\{u_{\e_{n(k_i)}}\}\subset\{u_{\e_{n(k_{i-1})}}\}$ such that $u_{\e_{n(k_i)}}$ stochastic two-scale converges to some $u_i\in L^2(D_i\times\Omega)$. 
This allows us to define a diagonal sequence of indices 
    \begin{equation}
        \label{def:seqindices}
        n(k_\infty(1))=n(k_1(1)),\hspace{0.1cm} n(k_\infty(2))=n(k_2(2)), \dots, n(k_\infty(i))=n(k_i(i)), \dots.
    \end{equation}
 Since for any $i\in\N$, up to the first $(i-1)$ terms, the sequence of indices $n(k_\infty)$ is included in $n(k_i)$, we deduce that $u_{\e_{n(k_\infty)}}$ stochastic two-scale converges to $u_i$ in $L^2(D_i\times\Omega)$ for all $i\in\N$. 
 Once again, the uniqueness of the two-scale limits implies that $u_i \equiv u_j$ on $(D_i\cap D_j)\times\Omega$ whenever $D_i\cap D_j\neq\emptyset$. 
 Therefore, thanks to the  {\it principle du recollement des morceaus} (see, e.g., \cite{S66}), we deduce that there exists a unique distribution $u\in L^2_\loc(\Rt\times \Omega)$ such that $u$ coincides with $u_i$ on $D_i\times\Omega$ and, for $\varphi\in C^\infty_c(D)$ and $b\in C(\Omega)$, 
     \begin{equation}
         \notag        
         \lim_{k_\infty\to\infty}\int_{\Rt}u_{\e_{n(k_\infty)}}(x)\varphi(x)b(T_{x/ \e}\omega)\dx =\int_{\Rt\times\Omega} u(x, \omega) \varphi(x)b(\omega) b\dx \dPo .
     \end{equation}
Additionally, $\{u_{\e_{n(k_\infty)}}\}$ weakly converges to $\Ev(u)$ in $L^2(D_i)$ for every $i\in\N$, see Remark \ref{rem:2sL2vsL2}.
Hence, since $\{u_{\e_n}\}$ and thus $\{u_{\e_{n(k_\infty)}}\}$ weakly converges in $L^2(D)$ to $u_\infty$ for every bounded domain $D\in\Rt$,
we deduce that $\Ev(u)\equiv u_\infty\in L^2_\theta(\Rt)$.
This concludes the proof.
\end{proof}

\begin{prop}[Stochastic 2-scale compactness in $\BL$]
\label{prop:cptBL}
Let $\{u_\e\}_{\e>0}$ be a bounded sequence in $\BL(\Rt)$ which weakly converges to $u_\infty$.
Then, $u_\e$ stochastically $L^2_\theta$-two-scale converges to some $u_\infty\in\BL(\Rt)$ and there exists $\xi\in L^2(\Rt;\hspace{0.03cm}\Ltpot(\Omega))$ such that, up to a subsequence, 
\begin{align}
\label{convergenceBL}
    \nabla u_\e \wsts \nabla u_0(x) + \xi(x,\omega) 
    \quad\text{weakly in }L^2(\Rt\times\Omega;\Rt).
\end{align}
\end{prop}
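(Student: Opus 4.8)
The plan is to reduce the statement to the local $H^1$ stochastic two-scale compactness of Theorem \ref{thm: compactnessH1} by an exhaustion-and-gluing argument, entirely parallel to the proof of Proposition \ref{prop:stoc2scL2theta}. Boundedness of $\{u_\e\}$ in $\BL(\Rt)$ means that $\{\nabla u_\e\}$ is bounded in $L^2(\Rt;\Rt)$ and $\{u_\e\}$ is bounded in $L^2_\theta(\Rt)$. Fixing an increasing sequence of bounded Lipschitz domains $D_1\compemb D_2\compemb\cdots$ with $\bigcup_i D_i=\Rt$, and noting that on each $D_i$ the weight $\theta$ is bounded below by a positive constant, we obtain that $\{u_\e\}$ is bounded in $L^2(D_i)$, hence in $H^1(D_i)$.

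Next I would apply Theorem \ref{thm: compactnessH1} on $D_1$: this yields a subsequence, a limit $u_0^{(1)}\in H^1(D_1)$, and $\xi^{(1)}\in L^2(D_1;\Ltpot(\Omega))$ with $u_\e\sts u_0^{(1)}$ strongly in $L^2(D_1\times\Omega)$ and $\nabla u_\e\wsts\nabla u_0^{(1)}+\xi^{(1)}$ weakly in $L^2(D_1\times\Omega;\Rt)$. Iterating over $D_2,D_3,\dots$ along nested subsequences and passing to a diagonal subsequence (not relabelled), one obtains corresponding limits $u_0^{(i)},\xi^{(i)}$ on each $D_i$; uniqueness of stochastic two-scale limits forces them to agree on intersections, so by the recollement principle (as in the proof of Proposition \ref{prop:stoc2scL2theta}) they patch into $u_0\in H^1_\loc(\Rt)$ and $\xi\in L^2_\loc(\Rt;\Ltpot(\Omega))$ with $u_\e\sts u_0$ and $\nabla u_\e\wsts\nabla u_0+\xi$ on every bounded open subset of $\Rt$. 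This diagonal/recollement step is the main obstacle: one must secure a single subsequence that works simultaneously on the whole exhaustion, with mutually compatible limits.

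It then remains to globalize and to identify the limit. A standard lower-semicontinuity property of weak stochastic two-scale convergence gives, for each $i$, $\|\nabla u_0+\xi\|_{L^2(D_i\times\Omega;\Rt)}\le\liminf_{\e\to0}\|\nabla u_\e\|_{L^2(D_i;\Rt)}\le\sup_{\e>0}\|\nabla u_\e\|_{L^2(\Rt;\Rt)}$, and letting $i\to\infty$ by monotone convergence yields $\nabla u_0+\xi\in L^2(\Rt\times\Omega;\Rt)$. Since $\xi(x,\cdot)\in\Ltpot(\Omega)$ has vanishing expectation by Lemma \ref{lemma:stochintegrationparts}, while $\nabla u_0(x)$ is deterministic, the two are orthogonal in $L^2(\Omega;\Rt)$ for a.e. $x$, so in fact $\nabla u_0\in L^2(\Rt;\Rt)$ and $\xi\in L^2(\Rt;\Ltpot(\Omega))$ separately. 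By Remark \ref{rem:2sL2vsL2}, the strong two-scale convergence $u_\e\sts u_0$ forces $u_\e\rightharpoonup u_0$ weakly in $L^2(D_i)$ for every $i$; since the hypothesis $u_\e\rightharpoonup u_\infty$ in $L^2_\theta(\Rt)$ likewise gives weak $L^2_\loc(\Rt)$ convergence to $u_\infty$ (test against $\varphi\in\Cic(\Rt)$, which is admissible since $\varphi/\theta\in L^2(\Rt)$), uniqueness of weak limits yields $u_0=u_\infty$. Hence $u_\infty\in L^2_\theta(\Rt)$ with $\nabla u_\infty\in L^2(\Rt;\Rt)$, that is, $u_\infty\in\BL(\Rt)$.

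Finally, every admissible test function for stochastic (weighted) two-scale convergence is compactly supported, so the local assertions above directly give that $u_\e$ stochastically $L^2_\theta$-two-scale converges to $u_\infty$ and that $\nabla u_\e\wsts\nabla u_0+\xi$ weakly in $L^2(\Rt\times\Omega;\Rt)$, which is exactly \eqref{convergenceBL}. Apart from the diagonal extraction flagged above, the remaining steps---the norm bookkeeping in the globalization and the weighted-pairing identification $u_0=u_\infty$---are routine adaptations of classical two-scale arguments.
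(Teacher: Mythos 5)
Your argument is correct, but it reaches the structure of the gradient limit by a different route than the paper. For the convergence of $u_\e$ itself both proofs use the same exhaustion/diagonal/recollement machinery (the paper via Proposition \ref{prop:stoc2scL2theta}, you via Theorem \ref{thm: compactnessH1} on each $D_i$, which in addition identifies the two-scale limit as deterministic). The divergence is in how one shows that the fluctuating part $\xi$ of the gradient limit lies in $\Ltpot(\Omega)$: the paper applies the $L^2$-compactness Theorem \ref{thm:L2cpt} \emph{globally} to $\nabla u_\e$ (legitimate since $\nabla u_\e$ is bounded in $L^2(\Rt)$ and test functions are compactly supported), identifies the mean $\Ev[\lambda_\infty]=\nabla u_\infty$ by integration by parts, and then proves $\lambda_\infty-\nabla u_\infty\perp C(\Omega;\Rt)\cap\Ltsol(\Omega)$ by testing against solenoidal fields, concluding via $(\Ltsol(\Omega))^\perp=\Ltpot(\Omega)$; this avoids any diagonal extraction for the gradients. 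You instead inherit $\xi^{(i)}\in L^2(D_i;\Ltpot(\Omega))$ locally for free from Theorem \ref{thm: compactnessH1}, at the cost of running the diagonal/recollement argument for the gradients as well and of an extra globalization step (lower semicontinuity of the two-scale norm plus the orthogonal splitting $\|\nabla u_0+\xi\|^2=\|\nabla u_0\|^2+\|\xi\|^2$, which is justified since $\Ev[\zeta]=0$ for $\zeta\in\Ltpot(\Omega)$ by density from Lemma \ref{lemma:stochintegrationparts}). Both routes are sound; the paper's is cleaner on the gradient side because it works globally in one stroke, while yours reuses the local $H^1$ compactness more systematically and makes the deterministic/fluctuating decomposition explicit through orthogonality. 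Your identification $u_0=u_\infty$ via Remark \ref{rem:2sL2vsL2} and uniqueness of weak $L^2_{\rm loc}$ limits matches the paper's reasoning.
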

    \begin{proof}
        First, note that the weak convergence of $u_\e$  in $BL^1(\Rt)$ implies the weak convergence of $u_{\e}$ in $L^2_\theta(\Rt)$ to $u_\infty$. Hence, an application of Proposition \ref{prop:stoc2scL2theta} yields the existence of a function $u\in L^2_{\loc}(\Rt\times Q)$ such that, up to a subsequence, $u_\e$ stochastically $L^2_\theta$-two scale converges to $u  \in L^2_{\loc}(\Rt\times\Omega)$. Repeating similar arguments as those in Proposition \ref{prop:stoc2scL2theta}, we find that, for every $i\in\mathbb{N}$, there exists a subsequence $\{u_{\e_{n(k_i)}}\}$ such that $n(k_i)\subset n(k_{i-1})$ and  
            \begin{equation}
                \notag
                u_{\e_{n(k_i)}} \wsts u_\infty(x) \equiv \Ev[u] \equiv u(x, \omega) \quad\mbox{in } L^2(D_i\times \Omega).
            \end{equation}
        Here, $\{D_i\}_{i\in\mathbb{N}}$ is a covering of $\Rt$ consisting of bounded domains. Defining the diagonal sequence of indices as in \eqref{def:seqindices}, for every $i\in\mathbb{N}$, up to the first $(i-1)$ terms, the sequence of indices $n(k_\infty)$ is included in $n(k_i)$ and 
            \begin{equation}
                \notag
                u_{\e_{n(k_\infty)}} \wsts u_\infty(x) \equiv \Ev[u] \equiv u(x, \omega) \quad\mbox{in } L^2(D_i\times \Omega).
            \end{equation}
       In particular, this implies that $u\equiv u_\infty\in L^2_\theta (\Rt)$ in $\Rt$.
       \par To conclude the proof, it remains to show \eqref{convergenceBL}. For simplicity, with a slight abuse of notation, from now on we omit the index $n(k_\infty)$ and simply denote the above exctracted subsequence by $\ep$.
       Once again, the weak convergence of  $u_\e$ in $BL^1(\Rt)$ to $u_\infty$ implies that $\nabla u_\e$ weakly converges to $\nabla u_\infty$ in $L^2_\theta(\Rt)$, and that $\nabla u_\e$ is bounded in $L^2(\Rt)$. 
       Due to Theorem \ref{thm:L2cpt}, there exists $\lambda_\infty\in L^2(\Rt\times\Omega;\hspace{0.03cm} \Rt)$ such that, up to subsequences, $\nabla u_{\e}$ stochastically two-scale converges to $\lambda_\infty$. Let $\varphi\in C^\infty_c(\Rt)$. Integrating by parts, we deduce that 
            \begin{align}
                \notag
                \int_{\Rt} \nabla u_{\e}(x)\varphi(x)\dx = - \int_{\Rt} u_{\e}(x)\nabla\varphi(x)\dx.
            \end{align}
        Passing to the two-scale limit, we get 
             \begin{align}
                \notag
                \int_{\Rt} \varphi(x) \left[\int_\Omega \lambda_\infty (x, \omega)\dPo\right]\dx = - \int_{\Rt} u_\infty(x)\nabla\varphi(x)\dx.
             \end{align}
      Therefore, we define the distribution $\nabla u_\infty$  by 
          \begin{equation}
              \notag
              \nabla u_\infty  := \int_\Omega  \lambda_\infty (\cdot, \omega)\dPo \in L^2(\Rt).
          \end{equation}
         Now, let $\sigma \in C(\Omega;\hspace{0.03cm} \Rt)\cap L^2_\sol (\Omega)$ and $\varphi\in C^\infty_c(\Rt)$. 
         Hence, 
             \begin{align}
                 \int_{\Rt} \nabla u_{\e}\cdot \sigma (T_{x/\e}\omega)\varphi(x)\dx & =\int_{\Rt}\nabla [u_{\e}\varphi(x)] \cdot \sigma (T_{x/\e}\omega)\dx\notag\\
                 & \quad -\int_{\Rt} u_{\e}(x) \nabla \varphi(x) \cdot  \sigma (T_{x/\e}\omega)\dx\notag\\
                 & = - \int_{\Rt} u_{\e}(x) \nabla \varphi(x) \cdot  \sigma (T_{x/\e}\omega)\dx\notag.
             \end{align}
          Taking the limit as $\e\to 0$ and integrating by parts, we find
              \begin{align}
                  \int_{\Rt\times \Omega} \lambda_\infty (x, \omega)\cdot \sigma(\omega)\varphi(x)\dPo\dx & = - \int_{\Rt\times \Omega} u_\infty(x) \nabla \varphi(x) \cdot  \sigma (\omega)\dPo\dx \notag\\
                  & = \int_{\Rt\times \Omega} \nabla u_\infty(x) \cdot  \sigma (\omega) \varphi(x) \dPo\dx. \notag
              \end{align}
        The last equality shows that $\xi(x, \omega):=\lambda_\infty(x, \omega)-\nabla u_\infty(x)\in (C(\Omega;\hspace{0.03cm} \Rt)\cap L^2_\sol(\Omega))^\perp$ for almost every $x\in \Rt$. 
        By the density of $C(\Omega)$ in $L^2(\Omega)$, we conclude that  $\xi\in L^2(\Rt;\hspace{0.03cm} (L^2_\sol(\Omega))^\perp) = L^2(\Rt;\hspace{0.03cm} L^2_\pot(\Omega))$.
    \end{proof}

With this result at hands, we are now in a position to characterize the behaviour of the stray fields $\{\hd[m_\ep]\}_{\e>0}$ defined in \eqref{prob um}.
Applying the characterization to the sequence $\Ms(T_{\cdot/\e} \tomega)m\chi_\domain$ for $m\in L^2(\domain;\St)$ as in the definition of $\W_\e$, will then allow us to prove Proposition \ref{prop:intro:Whom}.

\begin{prop}[Stochastic 2-scale convergence and the demagnetizing field]
\label{prop:2scaleconvdemagne}
Let $\{m_\e\}_{\e>0}$ be a bounded family in $L^2(\Rt;\hspace{0.03cm}\Rt)$ which stochastically 2-scale converges to $m=m(x,\omega)\in L^2(\Rt\times\Omega;\hspace{0.03cm}\Rt)$.
Then, the stochastic 2-scale limit of $\{\hd[m_\e]\}_{\e>0}\subset L^2(\Rt;\hspace{0.03cm}\Rt)$ exists and is given by
\begin{align*}
    \hd(x,\omega)= \hd\big[\Ev[m]\big](x) + \xi_m(x, \omega),
\end{align*}
where, for a.e. $x\in\Rt$, the map $\xi_m(x,\cdot)$ is the unique solution in $\Ltpot(\Omega)$ to
\begin{align}
\label{auxprobl}
    m(x,\cdot)+\xi_m(x,\cdot)\in\Ltsol(\Omega).
\end{align}  
\begin{remark}
    The variational formulation of \eqref{auxprobl} reads
        \begin{equation}
            \notag
            \int_{\Omega} \xi_m(x, \omega)\cdot \Psi(\omega)\dPo= - \int_{\Omega} m(x, \omega)\cdot \Psi(\omega)\dPo
        \end{equation}
for any $\Psi\in L^2_\pot(\Omega)$.
\end{remark}
\end{prop}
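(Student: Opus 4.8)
The plan is to use that $\hd[m_\e]=\nabla u_{m_\e}$ with $u_{m_\e}\in\BL(\Rt)$ the variational solution of \eqref{varformum} with datum $m_\e$, so that the two-scale analysis of the stray fields reduces to that of the gradients $\nabla u_{m_\e}$. First I would observe that, by the stability estimate \eqref{stabilityesti} and the boundedness of $\{m_\e\}$ in $L^2(\Rt;\Rt)$, the sequence $\{u_{m_\e}\}$ is bounded in $\BL(\Rt)$; extracting a weakly convergent subsequence $u_{m_\e}\rightharpoonup u_\infty$ and invoking Proposition \ref{prop:cptBL}, I obtain $\xi\in L^2(\Rt;\Ltpot(\Omega))$ with $\nabla u_{m_\e}\wsts\nabla u_\infty(x)+\xi(x,\omega)$ weakly in $L^2(\Rt\times\Omega;\Rt)$ along a further subsequence. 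It then remains to identify $\nabla u_\infty=\hd[\Ev[m]]$ and $\xi=\xi_m$; since the resulting limit does not depend on the subsequence, a Urysohn argument upgrades the convergence to the full sequence.

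To identify $u_\infty$, I would test \eqref{varformum} for $u_{m_\e}$ against a fixed $\psi\in\Cic(\Rt)$ (no oscillation) and pass to the two-scale limit: the left-hand side converges to $\int_{\Rt}\nabla u_\infty\cdot\nabla\psi\dx$ because $\Ev[\xi(x,\cdot)]=0$ (elements of $\Ltpot(\Omega)$ are mean-free, Lemma \ref{lemma:stochintegrationparts}), while the right-hand side converges to $-\int_{\Rt}\Ev[m]\cdot\nabla\psi\dx$ by the stochastic two-scale convergence of $\{m_\e\}$ (with test function $b\equiv 1$). This shows $u_\infty$ solves \eqref{varformum} with datum $\Ev[m]\in L^2(\Rt;\Rt)$, so $u_\infty=u_{\Ev[m]}$ by uniqueness, and hence $\nabla u_\infty=\hd[\Ev[m]]$.

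To identify $\xi$, I would test \eqref{varformum} against the oscillating corrector $\varphi_\e(x):=\e\,\phi(x)\,b(T_{x/\e}\tomega)$ with $\phi\in\Cic(\Rt)$ and $b\in C^1(\Omega)$; the group property \ref{asT_SumGroupAction} gives $\nabla\varphi_\e(x)=\e\,b(T_{x/\e}\tomega)\nabla\phi(x)+\phi(x)(\nablaomega b)(T_{x/\e}\tomega)$, so $\varphi_\e\in\BL(\Rt)$ (bounded, compactly supported, with gradient in $L^2$). The $O(\e)$ terms vanish in the limit by the $L^2(\Rt)$-bounds on $\nabla u_{m_\e}$ and $m_\e$, while the remaining terms pass to the two-scale limit using that $\nablaomega b\in C(\Omega;\Rt)$ is an admissible test function; since $\Ev[\nablaomega b]=0$ kills the $\nabla u_\infty$ contribution, I arrive at $\int_{\Rt}\phi(x)\int_\Omega(m(x,\omega)+\xi(x,\omega))\cdot(\nablaomega b)(\omega)\dPo\dx=0$ for all such $\phi,b$. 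As $\{\nablaomega b:b\in C^1(\Omega)\}$ is dense in the separable space $\Ltpot(\Omega)$, this yields, for a.e. $x\in\Rt$, that $m(x,\cdot)+\xi(x,\cdot)\perp\Ltpot(\Omega)$, i.e. $m(x,\cdot)+\xi(x,\cdot)\in\Ltsol(\Omega)$. Combined with $\xi(x,\cdot)\in\Ltpot(\Omega)$ and the orthogonal splitting $L^2(\Omega;\Rt)=\Ltpot(\Omega)\oplus\Ltsol(\Omega)$, this characterizes $\xi(x,\cdot)$ as the unique solution $\xi_m(x,\cdot)$ of \eqref{auxprobl}, with the stated variational formulation; measurability in $x$ and the bound $\|\xi_m\|_{L^2(\Rt\times\Omega)}\le\|m\|_{L^2(\Rt\times\Omega)}$ are immediate from $m\in L^2(\Rt\times\Omega;\Rt)$.

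The main obstacle I anticipate is the bookkeeping tied to the unbounded domain $\Rt$ and the weighted space $\BL$: one must ensure that the two-scale compactness of $\nabla u_{m_\e}$ holds globally on $\Rt$ — which is precisely what Proposition \ref{prop:cptBL} supplies through the diagonal extraction over an exhaustion by bounded domains — and that the oscillating correctors $\varphi_\e$ are genuinely admissible both in $\BL(\Rt)$ and in the definition of stochastic two-scale convergence. A secondary, more routine point is the density step ($\Cic(\Rt)$ dense in $\BL(\Rt)$; $\{\nablaomega b\}$ dense in $\Ltpot(\Omega)$), used to promote the test-function identities to the full variational formulations and to make the ``a.e. $x$'' statement hold simultaneously for all $\Psi\in\Ltpot(\Omega)$.
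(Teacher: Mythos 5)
Your proposal is correct and follows essentially the same route as the paper's proof: boundedness of $u_{m_\e}$ in $\BL(\Rt)$ via \eqref{stabilityesti}, two-scale compactness from Proposition \ref{prop:cptBL}, and then testing \eqref{varformum} with $\varphi(x)+\e\psi(x)b(T_{x/\e}\tomega)$ — your two separate tests correspond exactly to the paper's choices $\psi\equiv 0$ and $\varphi\equiv 0$ — with Lemma \ref{lemma:stochintegrationparts} killing the cross terms. The only cosmetic differences are that you make the Urysohn/full-sequence step and the admissibility of the oscillating correctors explicit, and you phrase uniqueness of $\xi_m$ via the orthogonal splitting $L^2(\Omega;\Rt)=\Ltpot(\Omega)\oplus\Ltsol(\Omega)$ where the paper invokes Lax--Milgram.
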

\begin{proof}
     Combining the stability estimate \eqref{stabilityesti} with the boundedness in $L^2(\Rt)$ of $m_\e$, the sequence of magnetostatic potentials $\{u_{m_\e}\}_{\e>0}$  is uniformly bounded in $BL^1(\Rt)$.
    Thus, there exists $u_m\in BL^1(\Rt)$ such that, up to the extraction of a non-relabelled subsequence, $u_{m_\e}$ weakly converges to $u_m$ in $BL^1(\Rt)$. 
    In view of Proposition \ref{prop:cptBL}, there exist functions $u_m\in BL^1(\Rt)$ and $\xi_m\in L^2(\Rt; \hspace{0.03cm} L^2_\pot(\Omega))$ such that 
        \begin{equation}
            \notag
            u_{m_\e}\wsts u_m \quad\mbox{in } L^2_\theta \quad \mbox{and}\quad \nabla u_{m_\e}\wsts \nabla u_m + \xi_m\quad \mbox{in } L^2(\Rt\times \Omega;\Rt).
        \end{equation}
   Testing  \eqref{varformum} with $\varphi(x) + \e \psi(x)b(T_{x/\e}\omega)$, for $\varphi, \psi \in C^\infty_c(\Rt)$ and $b\in C^1(\Omega)$, we infer 
        \begin{multline*}
            \int_{\Rt} \nabla u_{m_\e} \cdot (\nabla \varphi (x) + \psi(x)\nablaomega b(T_{x/\e}\omega) + \e b(T_{x/\e}\omega)\nabla \psi(x) ) \dx \\
            = -\int_{\Rt} m_\e \cdot (\nabla\varphi(x)  +\psi(x) \nablaomega  b(T_{x/\e}\omega)+ \e b(T_{x/\e}\omega)\nabla \psi(x)) \dx.
        \end{multline*}
    Passing to the stochastic two-scale limit yields 
         \begin{multline}\label{eq:hd:testedLimit}
             \int_{\Rt\times\Omega}  (\nabla u_m(x)+\xi_m(x, \omega) ) \cdot  (\nabla \varphi (x)  + \psi(x)\nablaomega b(
             \omega))\dPo \dx \\
             =  - \int_{\Rt\times \Omega} m(x, \omega)\cdot (\nabla\varphi(x)  +\psi(x) \nablaomega b(\omega) ) \dPo \dx.
         \end{multline}
         In particular, choosing $\psi\equiv 0$ we have  
              \begin{align}
              \notag
                   -\int_{\Rt}\Ev[m(x,\cdot)]\cdot  \nabla \varphi (x)  \dx 
                   &=\int_{\Rt\times \Omega} (\nabla u_m(x)+\xi_m(x, \omega) )\cdot \nabla \varphi (x)\dx\\
                   &= \int_{\Rt} \nabla u_m(x)\cdot \nabla \varphi (x)\dx,\notag
              \end{align}
        where the last equality is a consequence of Lemma \ref{lemma:stochintegrationparts}.
        In other words, $\nabla u_m = \hd\left[\Ev[m]\right]$, since $u_m\in \BL(\Rt)$ is a solution to 
            \begin{equation*}
                - \div \left(\nabla u_m + \Ev[m]\right)  = 0.
            \end{equation*}
      Now, choosing $\varphi\equiv 0$, \eqref{eq:hd:testedLimit} yields
            \begin{align*}
               \int_{\Rt\times\Omega} (\nabla u_m(x) + \xi_m(x,\omega))\cdot\nabla_\omega b(\omega) \psi(x)\dPo\dx
               = -\int_{\Rt\times \Omega} m(x, \omega) \cdot \nabla_\omega b(\omega) \psi(x)\dPo\dx. 
            \end{align*}
    This implies that for almost every $x\in\Rt$
        \begin{align}
            -\int_{\Omega} m(x, \omega) \cdot \nabla_\omega b(\omega)\dPo &=\int_{\Omega} (\nabla u_m(x) + \xi_m(x,\omega))\cdot\nabla_\omega b(\omega) \dPo\notag\\
            &= \int_{\Omega} \xi_m(x,\omega)\cdot\nabla b(\omega) \dPo, \label{weakfor2}
        \end{align}
    where in the last equality we  used again Lemma  \ref{lemma:stochintegrationparts}.  
    The equality \eqref{weakfor2} is the variational formulation of 
        \begin{equation}
            \notag
            \div_{\omega} \xi_m (x, \omega) = -\div_\omega m(x, \omega) \quad \mbox{in } L^2_\pot(\Omega).
        \end{equation}
        The well-posedness of this problem is a consequence of Lax-Milgram's theorem in $L^2_\pot(\Omega)$. 
        This concludes the proof.
\end{proof}

We conclude this subsection with the proof of Proposition \ref{prop:intro:Whom}.
\begin{proof}[Proof of Proposition \ref{prop:intro:Whom}]
Let $m\in L^2(D;\St)\subset L^2(\Rt;\Rt)$ and fix $\tomega\in\wt{\Omega}$, see Remark \ref{rem:mean-value property}.
Define $M_\e\in L^\infty(\Rt)$ by $M_\e(x)\coloneqq \Ms(T_{x/\e}\tomega)$.
Then, the sequence $\{M_\e m\}$ weakly stochastically two-scale converges to $\Ms(\omega) m(x)$ componentwise. 
Since $\tomega$ is a typical trajectory with respect to  $\Ms^2\dP$, and $m(x)\in\St$ for every $x\in\domain$,
    \begin{equation*}
        \|M_\e m\|_{L^2(\domain)}^2
        =\int_\domain \Ms(T_{x/\e}\tomega)^2\dx 
        \overset{\e\rightarrow 0}{\longrightarrow}
        \|\Ms m\|_{L^2(\domain\times\Omega)}^2.
    \end{equation*}
    Thus, $\{M_\e m\}$ strongly stochastically two-scale converges to $\Ms m$.
    Hence, we can apply Proposition \ref{prop:2scaleconvdemagne} for the corresponding stray fields.
    Note that by the definition of $\Theta_M\in\Ltpot(\Omega;\Rt)$, cf. \eqref{eq:correctors:ThetaM},  for almost every $x\in D$ we have
    \begin{equation}\label{eq:Whom:corrector}
        \Ms m(x) + \Theta_M m(x) \in\Ltsol(\Omega)
    \end{equation}
    and therefore
    \begin{equation*}
        \hd\left[M_\e m \chi_\domain\right] \wsts \hd\left[\Ev[\Ms]m\right] + \Theta_M m
        \quad\text{in }L^2(\Rt\times\Omega;\Rt).
    \end{equation*}
    Thus, the weak-strong convergence principle in Proposition \ref{prop:weak-strong conv} yields
    \begin{align*}
        \lim_{\e\to 0}\W_\e(m) 
        &=  -\lim_{\e\to 0}\frac{\mu_0}{2}(h_d[M_\e m], M_\e m)_{L^2(D)}\\
        &=  -\frac{\mu_0}{2}\Ev\left[\left(\hd\left[\Ev[\Ms]m\right],\Ms m\right)_{L^2(D)} + \left(\Theta_M m,\Ms m\right)_{L^2(\domain)} \right]\\
        &=  -\frac{\mu_0}{2}\left(\hd\left[\Ev[\Ms]m\right],\Ev[\Ms] m\right)_{L^2(\domain)} 
            -\frac{\mu_0}{2}\Ev\left[\left(\Theta_M m,\Theta_M m\right)_{L^2(\domain)} \right]\\
        &=  \Whom(m)
    \end{align*}
    where we used \eqref{eq:Whom:corrector} with $\Theta_M m(x)\in\Ltpot(\Omega)$ for almost every $x\in\domain$.
    To conclude the proof, it remains to show that the family $\W_\e$ continuously converges to $\Whom$ in $L^2$, i.e. that for $m_\e\to m_0$ strongly in $L^2(\domain;\St)$ 
    there holds
       \begin{align}
       \label{contlimdegmselfener}
            \W_\e (m_\e) \rightarrow \Whom (m_0) \quad\text{as }\e\rightarrow 0.
       \end{align}
        To this end, we write 
           \begin{align}
           \notag
               |\W_\e (m_\e) - \Whom (m_0)| \leq |\W_\e (m_\e) - \W_\e (m_0)| +|\W_\e (m_0) - \Whom (m_0)|.
           \end{align}
    Due to Proposition \ref{prop:2scaleconvdemagne}, it follows that $|\W_\e (m_0) - \Whom (m_0)| \to 0$ as $\e\to 0$.  
    Since $\{m_\e\}_{\e>0}\subset L^2(\domain;\St)$, by  \eqref{stabilityesti} we infer
         \begin{align}
             |\W_\e (m_\e) - \W_\e (m_0)| & \leq |(h_d[M_\e m_\e], M_\e (m_\e-m_0))_{L^2(D)} + (h_d[M_\e (m_\e-m_0)], M_\e m_0)_{L^2(D)}|\notag\\
             & \leq 2 \|\Ms\|^2_{L^\infty (D)} \|m_\e-m_0\|_{L^2(D; \R^3)},\notag
         \end{align}
  which in turn yields \eqref{contlimdegmselfener}.
\end{proof}
\subsection{The homogenized anisotropy and the interaction energies}
\label{section:Aniso+Zeeman}
In this subsection we prove Proposition \ref{prop:intro:AhomZehom}, i.e. we focus on the continuous convergence of the family of anisotropy energy functionals $\{\A_\e\}$ and of the family of Zeeman energy functionals $\{\Ze_\e\}$.

\begin{proof}[Proof of Proposition \ref{prop:intro:AhomZehom}]
\underline{Step 1: The convergence of $\{\A_\e\}$.}
    We prove that for every $m_0\in L^2(D; \hspace{0.03cm}\mathbb{S}^2)$ and every sequence $\{m_\e\}\subset L^2(D; \hspace{0.03cm}\mathbb{S}^2)$ with $m_\e\to m_0$ strongly in $L^2(D; \hspace{0.03cm}\mathbb{S}^2)$ there holds
    \begin{equation}
       \label{eq8a}
       \A_\e (m_\e) \rightarrow \Ahom (m_0) \quad\text{as }\e\rightarrow 0.
    \end{equation}
    In fact, we have
     \begin{equation}
         \label{eq7a}
         |\A_\e(m_\e) -\Ahom(m_0)|\leq |\A_\e(m_\e) -\A_\e(m_0)| + |\A_\e(m_0) -\Ahom(m_0)|.
     \end{equation}
    Now, we estimate the two terms on the right-hand side of \eqref{eq7a}. 
    An application of Birkhoff's ergodic theorem combined with an approximation argument yields $|\A_\e(m_0) -\Ahom(m_0)|\to 0$, as $\e\to 0$ (see also Remark \ref{rk:Birkhoff}).
    On the other hand, using the global Lipschitz continuity of $\varphi$ combined with H\"{o}lder inequality, we deduce
     \begin{align}
         |\A_\e(m_\e) -\A_\e(m_0)| &\leq \int_D\left| \varphi\left(T_{x/\e}\omega, m_\e\right) - \varphi\left(T_{x/\e}\omega, m_0\right)\right|\dx \notag\\
         &\leq L_{\rm an} \int_D |m_\e(x)-m_0(x)|\dx \notag\\
         &\leq L_{\rm an} |D|^{1/2} \|m_\e-m_0\|_{L^2(D; \hspace{0.03cm} \R^3)}, \notag 
     \end{align}
     which in turn implies \eqref{eq8a}.\\
     
\underline{Step 2: The convergence of $\{\Ze_\e\}$.}
    The convergence of the Zeeman energies  follows directly via an analogous argument to Step 1.
    On the one hand, since $\{\Ms(T_{\cdot/\e}\tomega)m_0\}$ weakly converges to $\Ev[\Ms]m_0$ in $L^2(\domain)$,
    \begin{equation*}
        \Ze_\e(m_0) = -\mu_0 \int_D h_a\cdot \Ms(T_{x/\e}\tomega)m_0(x)\dx
        \overset{\e\rightarrow 0}{\longrightarrow}-\mu_0 \int_D h_a\cdot \Ev[\Ms]m_0(x)dx
        =\Zehom(m_0).
    \end{equation*}
    On the other hand,
    \begin{equation*}
        \left|\Ze_\e(m_\e) - \Ze_{\e}(m_0) \right|
        \leq \mu_0 C_{\rm sat} \|h_a\|_{L^2(\domain;\Rt)}\|m_\e-m_0\|_{L^2(\domain;\Rt)}.
    \end{equation*}
\end{proof}

\subsection{Equi-coerciveness of the micromagnetic functional}
\label{section:equicoerc}
The main part left for the proof of Theorem \ref{mainthm} is to show that the family $\{\F_\e\}$ is equi-coercive in the weak topology of $H^1(D, \St)$. 
This then guarantees the validity of the fundamental theorem of $\Gamma$-convergence concerning the variational convergence of minimum problems (see \cite{BD98, DM93}).

\begin{prop}
\label{prop:equicoerciveness}
The family of functionals $\F_\e$ is mildly equi-coercive in the weak topology of $H^1(D;\hspace{0.03cm}\St)$. In other words, there exists a weakly compact set $K\subset H^1(D; \hspace{0.03cm} \St)$ such that 
     \begin{equation}
         \notag
         \inf_{m\in H^1(D; \hspace{0.03cm} \St)} \F_\e(m) = \inf_{m\in K} \F_\e(m) \qquad \mbox{ for every } \e>0.
     \end{equation}
\end{prop}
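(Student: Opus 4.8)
The plan is to establish a uniform coercivity estimate for $\F_\e$ on $H^1(D;\St)$ and then take $K$ to be a suitable sublevel-set-type weakly compact subset. First I would show that the exchange energy controls the full $H^1$-seminorm uniformly in $\e$: by assumption \ref{asPara_Ex}, $\E_\e(m)\geq \tfrac{c_{\rm ex}}{2}\|\nabla m\|_{L^2(D)}^2$. Since $m$ takes values in $\St$, we automatically have $\|m\|_{L^2(D)}^2=|D|$, so a bound on $\E_\e(m)$ already gives a bound on $\|m\|_{H^1(D)}$. The remaining task is therefore to bound the other four energy contributions from below by something of the form $-C-\tfrac{c_{\rm ex}}{4}\|\nabla m\|_{L^2(D)}^2$, so that they can be absorbed.

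The key estimates are the following. For the DMI term, using \eqref{eq:intro:DMI}, $|\bchi(m)|\leq C$ since $|m|=1$, and \ref{asPara_DMI}, Young's inequality gives $|\K_\e(m)|\leq C_{\rm DMI}\int_D|\nabla m||\bchi(m)|\dx \leq \tfrac{c_{\rm ex}}{4}\|\nabla m\|_{L^2(D)}^2 + C|D|$. The anisotropy term is nonnegative by \ref{asPara_aniso}, hence $\A_\e(m)\geq 0$ and can simply be discarded. For the Zeeman term, \ref{asPara_Msat} and the normalization $|m|=1$ give $|\Ze_\e(m)|\leq \mu_0 C_{\rm sat}\|h_a\|_{L^1(D)}$, a constant independent of $m$ and $\e$. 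For the magnetostatic self-energy, the stability estimate \eqref{stabilityesti} and $|m|=1$ yield $|\W_\e(m)|\leq \tfrac{\mu_0}{2}\|h_d[M_\e m\chi_D]\|_{L^2}\|M_\e m\|_{L^2}\leq \tfrac{\mu_0}{2}\|M_\e m\|_{L^2(D)}^2\leq \tfrac{\mu_0}{2}C_{\rm sat}^2|D|$, again a uniform constant. Combining these,
\[
    \F_\e(m)\geq \frac{c_{\rm ex}}{2}\|\nabla m\|_{L^2(D)}^2 - \frac{c_{\rm ex}}{4}\|\nabla m\|_{L^2(D)}^2 - C_0 = \frac{c_{\rm ex}}{4}\|\nabla m\|_{L^2(D)}^2 - C_0,
\]
for a constant $C_0>0$ depending only on $D$, the structural constants, $\mu_0$, and $\|h_a\|_{L^1(D)}$, but not on $\e$ or $m$.

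From here the conclusion is standard. Fix any $\e_0>0$ and any $m_0\in H^1(D;\St)$ (e.g. a constant map), and set $\Lambda := \sup_{\e>0}\F_\e(m_0)$, which is finite by the same estimates applied from above. Define $K:=\{m\in H^1(D;\St): \|\nabla m\|_{L^2(D)}^2\leq \tfrac{4}{c_{\rm ex}}(\Lambda+C_0)\}$. Then $K$ is bounded in $H^1(D;\R^3)$, hence weakly precompact; it is weakly closed because the $\St$-constraint is preserved under weak $H^1$ convergence (weak $H^1$ convergence implies, up to a subsequence, a.e. convergence, which preserves $|m(x)|=1$) and the norm ball is weakly closed; thus $K$ is weakly compact in $H^1(D;\St)$. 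Any $m\notin K$ satisfies $\F_\e(m)>\Lambda\geq \F_\e(m_0)$ with $m_0\in K$, so the infimum over $H^1(D;\St)$ coincides with the infimum over $K$ for every $\e>0$, which is the mild equi-coercivity claimed. The only mildly delicate point is the uniformity of the constant $C_0$ across all $\e$; this is immediate here because every bound above used only the $L^\infty$ bounds on the coefficients and the pointwise constraint $|m|=1$, none of which involves $\e$.
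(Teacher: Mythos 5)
Your proof is correct and follows essentially the same strategy as the paper: a uniform lower bound of the form $\F_\e(m)\geq c\,\|\nabla m\|_{L^2}^2-C_0$ with constants independent of $\e$, followed by taking $K$ as a weakly compact sublevel set. The only differences are cosmetic (you absorb the DMI term via Young's inequality and bound $|\W_\e|$ by the stability estimate, whereas the paper completes the square in $\G_\e$ and uses $\W_\e\geq 0$), and your final comparison against a fixed reference configuration $m_0\in K$ is a clean way to conclude the equality of infima.
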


\begin{proof} Rewriting $\G_\e=\F_\e+\K_\e$ as in \eqref{def:funncEx+DMI} and applying the inequality $(a-b)^2\geq {1\over2}a^2- b^2$, along with assumption \ref{asPara_Ex}, yields
    \begin{align}
     \G_\e(m) &= {1\over 2}\int_D  a(T_{x/\e}\widetilde{\omega}) \left| \nabla m(x)   - 
 {\kappa(T_{x/ \e}\widetilde{\omega})\over a(T_{x/\e}\widetilde{\omega})}\bchi(m(x))\right|^2\dx  - {1\over 2}\int_D {\kappa^2(T_{x/ \e}\widetilde{\omega})\over a(T_{x/ \e}\widetilde{\omega})}|m(x)|^2\dx .\notag\\
 &\geq   {1\over 4} \int_D a(T_{x/ \e}\widetilde{\omega}) |\nabla m(x)|^2\dx - \int_D{\kappa^2(T_{x/\e}\widetilde{\omega})\over a(T_{x/ \e}\widetilde{\omega})}|m(x)|^2\dx  \notag\\
      &\geq {1\over 4}c_{\rm ex} \int_D|\nabla m(x)|^2\dx  -\int_D{\kappa^2(T_{x/ \e}\widetilde{\omega})\over a(T_{x/ \e}\widetilde{\omega})}|m(x)|^2\dx .\label{eq-c3}
    \end{align}
In view of assumptions \ref{asPara_Ex} and \ref{asPara_DMI} combined with an application of the Cauchy-Schwarz inequality, the last integral in \eqref{eq-c3} is bounded by
    \begin{equation}
    \left| \int_D{\kappa^2(T_{x/ \e}\widetilde{\omega})\over a(T_{x/ \e}\widetilde{\omega})}|m(x)|^2\dx \right|\leq {C_{\rm DMI}^2\over c_{\rm ex}}\|m\|_{L^2(D;\hspace{0.03cm} \St)} \leq C {C_{\rm DMI}^2\over c_{\rm ex}} |D|,\notag
   \end{equation}
where we have used the fact that $\|m\|_{L^2(D;\hspace{0.03cm} \St)}\leq C|D|$. Therefore,
    \begin{align}
      \G_\e(m)  &\geq {1\over 4}c_{\rm ex} \int_D|\nabla m(x)|^2\dx  -C {C_{\rm DMI}^2\over c_{\rm ex}}|D|.\label{eq-c4}
    \end{align}
The interaction energy $\Ze_\e$ is estimated by
   \begin{equation}
    \left|\mu_0 \int_Dh_a\cdot M_\e m(x)\dx  \right| \leq \mu_0\|h_a\|_{L^2(D)}\|M_\e m\|_{L^2(D)}\leq C\mu_0 |D||h_a|.\label{eq-c5}
    \end{equation}
Since the magnetostatic self-energy $\W_\e$, as well as the anisotropy energy $\A_\e$ are non-negative terms, and owing to \eqref{eq-c4} and \eqref{eq-c5}, we conclude that 
   \begin{align}
   \F_\e(m) & \geq {1\over 4}c_{\rm ex} \int_D|\nabla m(x)|^2\dx  - C {C_{\rm DMI}^2\over c_{\rm ex}}|D| - C\mu_0 |D||h_a|.\label{eq-c1}
   \end{align}
Now, set
    \begin{align}
    \notag
        \widetilde{\F}_\e(m):= \F_\e(m) +C \left(  {C_{\rm DMI}^2\over c_{\rm ex}} + \mu_0 |h_a| \right)|D|,
    \end{align}
so that, from inequality \eqref{eq-c1}, it follows that 
     \begin{equation}
         \label{eq-c2}
         C\|m\|_{H^1(D;\hspace{0.03cm}\St)}\leq  \widetilde{\F}_\e(m),
     \end{equation}
for every magnetization $m\in H^1(D; \hspace{0.03cm} \St)$. On the other hand, by assumptions \ref{asPara_Ex}, \ref{asPara_DMI}, and \ref{asPara_aniso}, we deduce that 
  \begin{align}
  \F_\e(m) 
  &\leq {1\over 2} \int_D a(T_{x/ \e}\widetilde{\omega}) |\nabla m(x)|^2\dx  + {1\over 2} \int_D {\kappa^2(T_{x/\e}\widetilde{\omega})\over a(T_{x/ \e}\widetilde{\omega})}|m(x)|^2\dx  + {\mu_0\over 2}\int_D|h_a[M_\e(x) m(x)]|^2\dx  \notag\\
  &\quad + C_{\rm an} |D|+ C\mu_0|D||h_a|C_s\notag\\
  &\leq {C_{\rm ex}\over 2} \int_D|\nabla m(x)|^2\dx  +{C^2_{\rm DMI}\over 2c_{\rm ex}} \|m\|^2_{L^2(D;\hspace{0.03cm} \M)} +C,\notag
  \end{align}
where we used \eqref{stabilityesti} for the second inequality.
In particular, there exists a positive constant  $\widetilde{C}$ such that $\widetilde{\F}_\e(m)\leq\widetilde{C}\|m\|^2_{H^1(D; \hspace{0.03cm} \St)}$
Therefore, for every magnetization $m\in H^1(D; \hspace{0.03cm} \St)$, there holds $\widetilde{\F}_\e(m)\leq C|D|$. 
Let $K(D; \hspace{0.03cm} \St)$ be the set defined by
    \begin{equation}
    \notag
    K(D; \hspace{0.03cm} \St):= \{m\in H^1(D; \hspace{0.03cm} \St)\hspace{0.03cm} :\hspace{0.03cm} \widetilde{\F}_\e(m)\leq C|D| \}.
    \end{equation}
Note that in view of \eqref{eq-c2}, if the magnetization $m\in K(D; \hspace{0.03cm} \St)$, then $\|\nabla m\|_{L^2(D; \hspace{0.03cm} \St)}\leq \widetilde{C}|D|$. Hence, $K(D; \hspace{0.03cm} \St)$ is contained in a ball $B_{\St}$ of $H^1(D; \hspace{0.03cm} \R^3)$. Setting $K:=K(D; \hspace{0.03cm} \St):= B_{\St}\cap H^1(D; \hspace{0.03cm} \St)$, we conclude that
  \begin{equation}
  \notag
 \inf_{m\in H^1(D; \hspace{0.03cm} \St)} \widetilde{\F}_\e(m)=\inf_{m\in K} \widetilde{\F}_\e(m),
  \end{equation}
where $K$ is weakly compact being the intersection of the weakly closed set $H^1(D; \hspace{0.03cm} \St)$  and the weakly compact set $B_{\St}$. This concludes the proof.
\end{proof} 

\subsection{Limiting behavior of the micromagnetic functionals - the proof of Theorem \ref{mainthm}}


We are now in a position to prove Theorem \ref{mainthm}. First, the equi-coerciveness of the family $\F_\e$ with respect to the weak topology $H^1(D;\hspace{0.03cm} \St)$ is proven in Section \ref{section:equicoerc}. In view of the stability property of  $\Gamma$-limits under the sum with a continuously convergent family of functionals (see \cite[Proposition 6.20]{DM93}), we deduce that, for $m\in L^2(D;\hspace{0.03cm}\St)$, 
       \begin{align}
      \notag
          F_\hom(m) & = \Gamma\mbox{-}\lim_{\e\to 0} \F_\e(m) = \Gamma\mbox{-}\lim_{\e\to 0} (\E_\e + \K_\e) (m) + \Whom(m) + \Ahom (m)+ \Zehom(m)\\
          &=\Ghom(m) + \Whom(m) + \Ahom (m)+ \Zehom(m),\notag
      \end{align}
where  $\Ghom$ is as in \eqref{eq:intro:Ghom},  $\Whom$ is defined by \eqref{eq:intro:def:Whom}, $\Ahom$ and $\Zehom$ are given, respectively, by \eqref{eq:intro:def:Ahom} and \eqref{eq:intro:def:Zehom}. This completes the proof of Theorem \ref{mainthm}.

\section{Application to Multilayers}
\label{section:multilayers}

\begin{figure}[t]
\centering
    \begin{minipage}{0.54\textwidth}
        \centering
        \includegraphics[width=4cm]{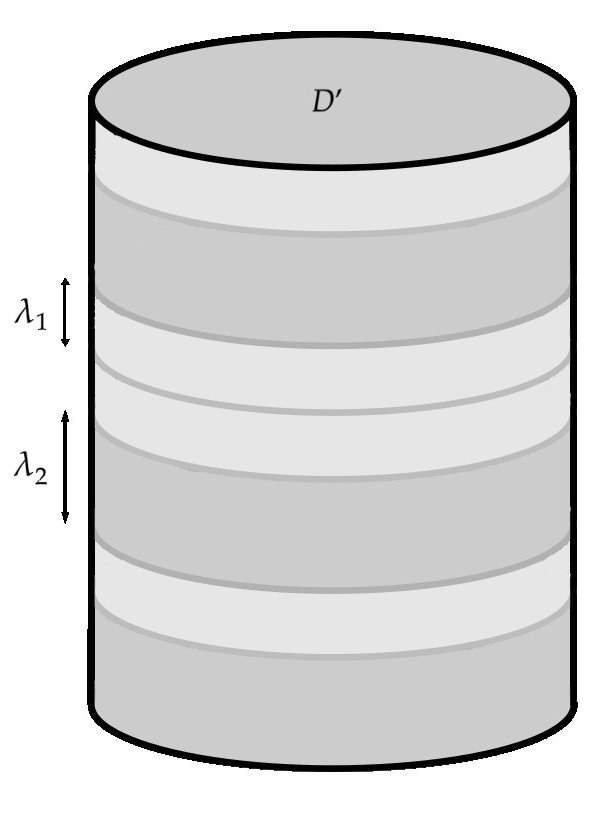}
        \end{minipage}
      \begin{minipage}{0.45\textwidth}  
        \captionof{figure}{We consider a chiral multilayer having a random laminated structure. E.g. we assume here that there are two possible layer ratios $\lambda_1, \lambda_2\in(0,1)$ both occurring with a certain probability.}
        \label{fig:randomlam}
    \end{minipage}
 \end{figure}

In this section, we specify the effective energy $\Ghom$ from Theorem \ref{thm:GeGammaConv} in the micromagnetic setting of chiral multilayers. 
In such a case, $\M=\mathbb{S}^2$ and $D\subset\R^3$ has a laminated structure. 
More specifically, we assume that $D\coloneqq D'\times I_\lambda$, where $D'$ is a bounded open subset of $\R^2$ with $|D'|=1$ and $I_\lambda=(0, \lambda)$, for some $\lambda>0$ (see Figure \ref{fig:randomlam}).
In addition to the usual assumptions, we let the dynamical system satisfy
   \begin{equation}\label{eq:lami:T1dim}
       T_{(x_1, x_2, x_3)}= T_{(0, 0, x_3)} = T_{x_3 e_3}
       \quad\text{for all $x\in\Rt$}.
   \end{equation}
In other words, the ergodic action described by $T$ does not depend on the variable $(x_1, x_2)\in D'$.

\begin{remark}[Differentiability in the multilayer setting]\label{rem:lami:diff}
    In particular, any function $u\in L^2(\Omega)$ is everywhere differentiable in the directions $e_1,e_2$ with $D_1 u= D_2 u =0$ in the sense of Definition \ref{def:derivates}.
    Therefore, the first two rows of every function in $\Ltpot(\Omega;\Rt)$ are $0$ almost everywhere. This holds, in particular, for $\Theta_a,\Theta_\kappa\in\Ltpot(\Omega;\Rt)$ as defined in Lemma \ref{lem:correctors}.
\end{remark}


\begin{prop}[Characterization of $\Ghom$ given a laminated structure]\label{prop:lami:Ghom}
Under the assumptions of Theorem \ref{mainthm}, and assuming additionally that the dynamical system satisfies \eqref{eq:lami:T1dim},
the homogenized energy functional $\Ghom$ as defined in \eqref{eq:intro:Ghom} is given by
\begin{equation}\label{eq:lami:Ghom}
    \Ghom(m) 
    =   \frac{1}{2}\int_\domain \nabla m : a_\eff \nabla m \dx
    -   \int_\domain \nabla m : \kappa_\eff \bchi(m) \dx
    +   \int_\domain \vphi_\eff (m) \dx.
\end{equation}
for $m\in H^1(D; \hspace{0.03cm} \St)$ with constants
\begin{align}
    a_\eff          &\coloneqq  \operatorname{diag}\left(\Ev[a],\Ev[a],\Ev\left[a^{-1}\right]^{-1}\right)\in\Rtt,                                           \notag\\
    \kappa_\eff     &\coloneqq  \operatorname{diag}\left(\Ev[\kappa],\Ev[\kappa],\Ev\left[\frac{\kappa}{a}\right]\Ev\left[a^{-1}\right]^{-1}\right)\in\Rtt, \notag\\
\intertext{and effective anisotropy density $\vphi_\eff\in C^1(\St,\R)$ defined as}
    s\mapsto \vphi_\eff(s)   
                    &\coloneqq  -\frac{1}{2}\left(\Ev\left[\frac{\kappa^2}{a}\right]-\Ev\left[\frac{\kappa}{a}\right]^2\Ev[a^{-1}]^{-1}\right)\left(s_1^2+s_2^2\right). \notag
\end{align}
\end{prop}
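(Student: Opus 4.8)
The plan is to use the fact that, under \eqref{eq:lami:T1dim}, the whole homogenization problem collapses to a one-dimensional one: I would compute the correctors $\Theta_a,\Theta_\kappa$ in closed form and substitute them into \eqref{eq:intro:Ghom}.

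First I would reduce the correctors to a scalar projection. By Remark \ref{rem:correctorsProjections}, for $i=1,2,3$ the column $\Theta_a e_i$ is the $(\cdot,\cdot)_a$-orthogonal projection of $\omega\mapsto -e_i$ onto $\Ltpot(\Omega)$, with $(\cdot,\cdot)_a=(\cdot,a\,\cdot)_{L^2}$, and $\Theta_\kappa e_i$ is the same projection applied to $\omega\mapsto -\kappa(\omega)a(\omega)^{-1}e_i$. Remark \ref{rem:lami:diff} tells us that every element of $\Ltpot(\Omega)$ has the form $(0,0,g)^\intercal$; since $-e_1$ and $-e_2$ are then $(\cdot,\cdot)_a$-orthogonal to all of $\Ltpot(\Omega)$, we get $\Theta_a e_1=\Theta_a e_2=0$ and likewise for $\Theta_\kappa$, so only the third column survives. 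What remains is to project a scalar $h\in L^2(\Omega)$ onto the closed subspace $V:=\overline{\{D_3v:v\in C^1(\Omega)\}}\subseteq L^2(\Omega)$ for the weighted product $(f,g)_a=\int_\Omega a fg\dP$. Here I would identify $V^{\perp_a}$ directly: $g\perp_a V$ means $\int_\Omega a g\,D_3v\dP=0$ for all $v\in C^1(\Omega)$, which — using also $D_1u=D_2u=0$ from Remark \ref{rem:lami:diff} — says that $ag$ admits all weak derivatives in the sense of Definition \ref{def:weakderivative} and they vanish; ergodicity (Proposition \ref{prop:uconstant}) then forces $ag$ to be $\P$-a.s.\ constant. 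Hence $V^{\perp_a}=\{c\,a^{-1}:c\in\R\}$ is one-dimensional, and the projection of $h$ onto $V$ is $h-c_ha^{-1}$, with $c_h$ fixed by requiring the projection to have mean zero (the elements of $V$ have mean zero by Lemma \ref{lemma:stochintegrationparts}).

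Carrying this out for $h=-1$ and $h=-\kappa a^{-1}$ gives $\Theta_a=\beta_a\,e_3\otimes e_3$ and $\Theta_\kappa=\beta_\kappa\,e_3\otimes e_3$ with
\begin{align*}
 \beta_a=\Ev[a^{-1}]^{-1}a^{-1}-1,\qquad \beta_\kappa=a^{-1}\big(\Ev[\kappa/a]\Ev[a^{-1}]^{-1}-\kappa\big).
\end{align*}
Since $e_3\otimes e_3$ is symmetric and idempotent, $(\beta\,e_3\otimes e_3)^\intercal a(\beta'\,e_3\otimes e_3)=a\beta\beta'\,e_3\otimes e_3$ for scalars $\beta,\beta'$, and a direct expansion of the squares yields $\Ev[a\beta_a^2]=\Ev[a]-\Ev[a^{-1}]^{-1}$, $\Ev[a\beta_a\beta_\kappa]=\Ev[\kappa]-\Ev[\kappa/a]\Ev[a^{-1}]^{-1}$, and $\Ev[a\beta_\kappa^2]=\Ev[\kappa^2/a]-\Ev[\kappa/a]^2\Ev[a^{-1}]^{-1}$. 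Plugging the first two into $\Ev[a\Id-(\nabla\Phi_a)^\intercal a\nabla\Phi_a]=\Ev[a]\Id-\Ev[a\beta_a^2]\,e_3\otimes e_3$ and $\Ev[\kappa\Id-(\nabla\Phi_a)^\intercal a\nabla\Phi_\kappa]=\Ev[\kappa]\Id-\Ev[a\beta_a\beta_\kappa]\,e_3\otimes e_3$ (with $\nabla\Phi_\bullet$ identified with $\Theta_\bullet$ as in Remark \ref{rk:correctors}) reproduces exactly the diagonal matrices $a_\eff$ and $\kappa_\eff$. For the last term of \eqref{eq:intro:Ghom} I would use the identity $\bchi(m)\pi_{\T[m]\St}=\bchi(m)$ recorded in Theorem \ref{thm:GeGammaConv} together with the elementary computation $\bchi(s):\big(c\,e_3\otimes e_3\big)\bchi(s)=c\big((\bchi(s))_{31}^2+(\bchi(s))_{32}^2\big)=c\,(s_1^2+s_2^2)$, which turns $-\tfrac12\int_\domain\bchi(m):\Ev[(\nabla\Phi_\kappa)^\intercal a\nabla\Phi_\kappa]\bchi(m)\dx$ into $\int_\domain\vphi_\eff(m)\dx$ with $\vphi_\eff$ as stated; the regularity $\vphi_\eff\in C^1(\St)$ is immediate since $\vphi_\eff$ is the restriction to $\St$ of a quadratic polynomial. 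Reassembling the three pieces gives \eqref{eq:lami:Ghom}.

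The only genuinely delicate point is establishing that $V^{\perp_a}$ is exactly the one-dimensional space $\{c\,a^{-1}\}$ — equivalently, that $V$ is the mean-zero subspace of $L^2(\Omega)$. This is where ergodicity enters, via Proposition \ref{prop:uconstant} applied to $ag$, and one has to be a little careful that the weak orthogonality relation really does put $ag$ into $H^1(\Omega)$ with $\nabla_\omega(ag)=0$. Once that is in place, the rest is bookkeeping with the rank-one idempotent $e_3\otimes e_3$ and elementary manipulations of expectations.
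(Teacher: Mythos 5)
Your proposal is correct and follows essentially the same route as the paper: reduce to the third row of the correctors via the vanishing of $D_1,D_2$, determine the remaining scalar entry by showing the corresponding flux is $\P$-a.s.\ constant (ergodicity via Proposition \ref{prop:uconstant}) and normalizing with the mean-zero property from Lemma \ref{lemma:stochintegrationparts}, then substitute into \eqref{eq:intro:Ghom}. The only cosmetic difference is that you package the one-dimensional corrector computation as identifying $V^{\perp_a}=\{c\,a^{-1}\}$ and projecting, whereas the paper applies the constancy argument directly to $a\Theta_{a,33}+a$; the resulting formulas and all subsequent algebra coincide with the paper's.
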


\begin{remark}[Comparison to the periodic case]\label{rem:lami:compPeriodic}
    It is easy to see that this is the same energy which Di Fratta and the first author obtained for periodic multilayers \cite[Theorem 3.1]{DDF20}, only due to the setting the expected values are replaced with cell averages.
\end{remark}

\begin{proof}
    We provide explicit characterizations of the correctors $\Theta_a= \left(\Theta_{a.ij}\right)_{i,j=1,2,3} $ $ \Theta_\kappa = \left(\Theta_{\kappa.ij}\right)_{i,j=1,2,3}$ in this setting. 
    We then plug these characterizations into the definition of $\Ghom$, cf \eqref{eq:intro:Ghom}, to obtain the result.

\underline{Step 1: The characterization of $\Theta_a$.}
    With Remark \ref{rem:lami:diff}, it remains to identify $\Theta_{a,3i}$ for $i=1,2,3$, which are defined by
    \begin{equation}\label{eq:lami:Theata_ij}
        a\Theta_{a,3i}e_3 + a e_i \in\Ltsol(\Omega).
    \end{equation}
    Again from Remark \ref{rem:lami:diff}, it immediately follows that $a e_1,a e_2\in\Ltsol(\Omega)$ and hence $\Theta_{a,31} = \Theta_{a,32} = 0$.
    For $i=3$ we rewrite \eqref{eq:lami:Theata_ij} as 
    \begin{equation}
    \notag
        \int_\Omega (a(\omega)\Theta_{a,33}(\omega) + a(\omega)) D_3 b(\omega)\dPo 
        = 0
        \quad\text{for all }b\in C^1(\Omega).
    \end{equation}
    By Definition \ref{def:weakderivative}, we thus have $D_3(a\Theta_{a,33}+a)=0$ and hence $a\Theta_{a,33}+a\in H^1(\Omega)$ with $\nablaomega(a\Theta_{a,33}+a)=0$ due to Remark \ref{rem:lami:diff}.
    Thus, by Proposition \ref{prop:uconstant} there exists $C_a\in\R$ such that $\P$-almost surely
    \begin{equation*}
        a\Theta_{a,33}+a = C_a.
    \end{equation*}
    We extend Lemma \ref{lemma:stochintegrationparts} to components of $\Ltpot(\Omega)$-functions via approximation, and have
    \begin{equation*}
        0 = \Ev[\Theta_{a,33}] = \Ev\left[a^{-1}\right]{C_a}-1
    \end{equation*}
    Hence, we obtain $C_a=\Ev[a^{-1}]^{-1}$ and therefore 
    \begin{equation}\label{eq:lami:Thetaa}
        \Theta_{a,33}=\frac{\Ev[a^{-1}]^{-1}}{a}-1.
    \end{equation}

\underline{Step 2: The characterization of $\Theta_\kappa$.}
    As in Step 1, it holds that $\Theta_{\kappa,ij}=0$ for $(i,j)\neq (3,3)$ and there exists a constant $C_\kappa\in\R$, such that $\P$-almost surely
    \begin{equation*}
        a\Theta_{\kappa,33}+\kappa = C_\kappa.
    \end{equation*}
    Analogously, we obtain $C_\kappa = \Ev[\kappa/a]\Ev[a^{-1}]^{-1}$ and therefore
    \begin{equation}\label{eq:lami:Thetak}
        \Theta_{\kappa,33}=\Ev\left[\frac{\kappa}{a}\right]\frac{\Ev[a^{-1}]^{-1}}{a}-\frac{\kappa}{a}.
    \end{equation}

\underline{Step 3: Plugging $\Theta_a,\Theta_\kappa$ into \eqref{eq:intro:Ghom}.}
    With \eqref{eq:lami:Thetaa} we have
    \begin{equation*}
        \Ev\left[a\Theta_a^\intercal\Theta_a\right]
        =
        \Ev\left[a^{-1}\left(\Ev[a^{-1}]^{-1}-a\right)^2\right]e_3 \otimes e_3
        =
        \left(\Ev[a]-\Ev[a^{-1}]^{-1}\right)e_3 \otimes e_3.
    \end{equation*}
    Additionally considering \eqref{eq:lami:Thetak}, for the mixed term we obtain
    \begin{align*}
        \Ev\left[a\Theta_a^\intercal\Theta_\kappa\right]
        &=
        \Ev\left[\left(\Ev[a^{-1}]^{-1}-a\right)\left(\Ev\left[\frac{\kappa}{a}\right]\frac{\Ev[a^{-1}]^{-1}}{a}-\frac{\kappa}{a}\right)\right]e_3 \otimes e_3\\
        &=
        \left(\Ev[\kappa]-\Ev\left[\frac{\kappa}{a}\right]\Ev[a^{-1}]^{-1}\right)e_3 \otimes e_3.
    \end{align*}
    For the effective anisotropy energy term, \eqref{eq:lami:Thetak} yields 
    \begin{align*}
        \Ev\left[a\Theta_\kappa^\intercal\Theta_\kappa\right]
        &=
        \Ev\left[a^{-1}\left(\Ev\left[\frac{\kappa}{a}\right]\Ev[a^{-1}]^{-1}-\kappa\right)^2\right]e_3 \otimes e_3\\
        &=
        \left(\Ev\left[\frac{\kappa^2}{a}\right]-\Ev\left[\frac{\kappa}{a}\right]^2\Ev[a^{-1}]^{-1}\right)e_3 \otimes e_3.
    \end{align*}
    Additionally, note that via direct calculation
    \begin{equation*}
        \bchi(m):(e_3\otimes e_3)\bchi(m) = \left|e_3\times m\right|^2= m_1^2 + m_2^2.
    \end{equation*}
    Plugging these identities into \eqref{eq:intro:Ghom} yields \eqref{eq:lami:Ghom}.
\end{proof}

Due to Remark \ref{rem:lami:compPeriodic}, via a result from \cite{DDF20} we immediately obtain a characterization for the minimizers of $\Ghom$ given $\Ev[\kappa]=0$.

\begin{lemma}[Characterization of minimizers, {{\cite[Theorem 3.1]{DDF20}}}]\label{lem:lami:minimizers}
    In the setting of Proposition \ref{prop:lami:Ghom},  additionally assuming that $\Ev[\kappa]=0$, the only minimizers in $H^1(D; \hspace{0.03cm} \St)$ of $\Ghom$ given by \eqref{eq:lami:Ghom} are helical textures $m_*$ of the form
    \begin{align*}
        m_*(x)      &\coloneqq \cos(\theta(x\cdot e_3))e_1 + \sin(\theta(x\cdot e_3))e_2,\\
        \theta(t)   &\coloneqq \theta_0 + \Ev\left[\frac{\kappa}{a}\right]t,
        \quad\text{for }t\in\R,
    \end{align*}
    with arbitrary $\theta_0\in\R$.
    The minimum value of the energy is given by
    \begin{equation*}
        \Ghom(m_*) = -\frac{\lambda}{2} \Ev\left[\frac{\kappa^2}{a}\right].
    \end{equation*}
\end{lemma}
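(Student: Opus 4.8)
The plan is to use the explicit formula \eqref{eq:lami:Ghom} provided by Proposition \ref{prop:lami:Ghom} and to complete the square. When $\Ev[\kappa]=0$, the first two diagonal entries of $\kappa_\eff=\operatorname{diag}(\Ev[\kappa],\Ev[\kappa],\Ev[\kappa/a]\Ev[a^{-1}]^{-1})$ vanish, so the DMI term only couples $\partial_3 m$ with $e_3\times m$, and, writing $\gamma\coloneqq\Ev[\kappa/a]$, $\beta\coloneqq\Ev[a^{-1}]^{-1}$ and $c\coloneqq\Ev[\kappa^2/a]-\gamma^2\beta\ge 0$ (the sign being Cauchy--Schwarz applied to $\kappa/\sqrt a$ and $1/\sqrt a$), the functional reads
\[
\Ghom(m)=\int_D\Big(\tfrac{\Ev[a]}{2}\big(|\partial_1 m|^2+|\partial_2 m|^2\big)+\tfrac{\beta}{2}|\partial_3 m|^2-\gamma\beta\,\partial_3 m\cdot(e_3\times m)-\tfrac{c}{2}\big(m_1^2+m_2^2\big)\Big)\dx .
\]
First I would complete the square in the $x_3$-derivative, using $|e_3\times s|^2=s_1^2+s_2^2$:
\[
\tfrac{\beta}{2}|\partial_3 m|^2-\gamma\beta\,\partial_3 m\cdot(e_3\times m)=\tfrac{\beta}{2}\big|\partial_3 m-\gamma(e_3\times m)\big|^2-\tfrac{\gamma^2\beta}{2}\big(m_1^2+m_2^2\big).
\]
Since $\tfrac{\gamma^2\beta}{2}+\tfrac{c}{2}=\tfrac12\Ev[\kappa^2/a]$, the whole density recombines into two manifestly nonnegative gradient terms minus $\tfrac12\Ev[\kappa^2/a](m_1^2+m_2^2)$.

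From this rewriting the lower bound is immediate: since $m_1^2+m_2^2=1-m_3^2\le 1$ pointwise and $|D|=|D'|\,\lambda=\lambda$, one gets $\Ghom(m)\ge-\tfrac{\lambda}{2}\Ev[\kappa^2/a]$ for every $m\in H^1(D;\St)$, with equality forcing, a.e. and simultaneously, $\partial_1 m=\partial_2 m=0$, $\partial_3 m=\gamma(e_3\times m)$, and $m_3=0$. The first condition means $m=m(x_3)$, the last that $m$ is valued in the equatorial circle $\St\cap\{s_3=0\}$; since $I_\lambda$ is an interval, $m$ then admits an $H^1$ lifting $m(x_3)=\cos\theta(x_3)\,e_1+\sin\theta(x_3)\,e_2$, and $\partial_3 m=\gamma(e_3\times m)$ becomes $\theta'=\gamma$ a.e., i.e.\ $\theta(t)=\theta_0+\gamma t$ with $\theta_0\in\R$ arbitrary. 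Conversely, every such $m_*$ is smooth, meets the three equality conditions, and attains $-\tfrac{\lambda}{2}\Ev[\kappa^2/a]$; hence it is a minimizer and the minimum value is as claimed. An alternative, essentially one-line route is the one the surrounding text hints at via Remark \ref{rem:lami:compPeriodic}: \eqref{eq:lami:Ghom} is structurally identical to the periodic multilayer energy of \cite[Theorem 3.1]{DDF20}, with cell averages replaced by expectations, so that theorem applies verbatim.

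The computation itself is elementary, so the only real point of care — and the step I would double-check — is the equality analysis: verifying that after completing the square the density is genuinely a sum of nonnegative terms (which is why the cancellation $\gamma^2\beta+c=\Ev[\kappa^2/a]$, equivalently the Cauchy--Schwarz sign of $c$, matters), and justifying the passage from ``$m$ circle-valued and depending on one variable'' to a bona fide affine phase $\theta\in H^1(I_\lambda)$ via the standard $H^1$ lifting on an interval. Everything else is bookkeeping.
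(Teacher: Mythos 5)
Your argument is correct, but it is a genuinely different route from the paper's: the paper gives no direct proof at all, it simply invokes Remark \ref{rem:lami:compPeriodic} and imports \cite[Theorem 3.1]{DDF20} verbatim, since \eqref{eq:lami:Ghom} coincides with the periodic multilayer energy with cell averages replaced by expectations. Your completion of the square is a self-contained reproof of that cited result: the cancellation $\gamma^2\beta+c=\Ev[\kappa^2/a]$ (with $c\ge 0$ by Cauchy--Schwarz applied to $\kappa/\sqrt a$ and $1/\sqrt a$) rewrites the density as $\tfrac{\Ev[a]}{2}(|\partial_1 m|^2+|\partial_2 m|^2)+\tfrac{\beta}{2}|\partial_3 m-\gamma(e_3\times m)|^2-\tfrac12\Ev[\kappa^2/a](m_1^2+m_2^2)$, and the equality analysis together with the $H^1$ lifting on the interval $I_\lambda$ is exactly the mechanism behind the periodic result. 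What your version buys is transparency and independence from the external reference; what the paper's version buys is brevity. Two minor points you should make explicit if you write this out in full: the rigidity conclusion ``$m_3=0$ a.e.''\ uses $\Ev[\kappa^2/a]>0$, i.e.\ $\kappa\not\equiv 0$ (in the degenerate case every constant is a minimizer and the uniqueness claim fails, a caveat implicit in the statement itself), and the step from $\partial_1 m=\partial_2 m=0$ to $m=m(x_3)$ with a single phase $\theta_0$ uses connectedness of $D'$.
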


\section*{Acknowledgements}
All authors acknowledge support of the Austrian Science Fund (FWF) through the SFB project F65. The research of E. Davoli and L. D'Elia has additionally been supported by the FWF through grants V662, Y1292, and P35359.

\section*{Data availability statement}
Data sharing not applicable to this article as no datasets were generated or analysed during the current study.

\begin{thebibliography}{99}
\bibitem{acerbi-fonseca-mingione} E. Acerbi, I. Fonseca, G. Mingione. Existence and regularity for mixtures of micromagnetic materials. {\em Proc. Roy. Soc. Edinburgh Sect. A}, {\bf 462} (2006), 2225--2243.

\bibitem{AdBMN21} F. Alouges, A. de Bouard, B. Merlet, L. Nicolas. Stochastic homogenization of the Landau-Lifshitz-Gilbert equation. \textit{Stoch PDE: Anal Comp} {\bf 9} (2021), 789–818.

\bibitem{ADF15}  F. Alouges, G. Di Fratta. Homogenization of composite ferromagnetic materials. {\em Proc. Roy. Soc. Edinburgh Sect. A} {\bf 471} (2015), 20150365.

\bibitem{allaire}  G. Allaire. Homogenization and two-scale convergence. {\em SIAM J. Math.
Anal.}, {\bf 23} (1992), 1482--1518.

\bibitem{babadjian.millot}
J.-F. Babadjian, V. Millot. Homogenization of variational problems in manifold valued Sobolev spaces.
{\em ESAIM: Control, Optimisation and Calculus of Variations}, {\bf 16} (2009), 833--855.

\bibitem{andrews.wright} K. T. Andrews, S. Wright. Stochastic homogenization of elliptic boundary-value problems
with $L^p$-data. {\em Asymptotic Analysis}, {\bf 17} (1998), 165--184.

\bibitem{berlyand.sandier.serfaty} L. Berlyand, E. Sandier, S. Serfaty. A two scale $\Gamma$-convergence approach for random non-convex homogenization. {\em Calc. Var. Partial Differential Equations} {\bf 56} (2017).

\bibitem{bak6} P. Bak, M. H. Jensen. Theory of helical magnetic structures and phase transitions in MnSi and FeGe.
{\em Journal of Physics C: Solid State Physics}, {\bf 13} (1980), p. 0.

\bibitem{bogdanov8} A. Bogdanov, A. Hubert. Stability of vortex-like structures in uniaxial ferromagnets. {\em Journal of Magnetism
and Magnetic Materials} {\bf 195} (1999), 182--192.

\bibitem{BBS97} G. Bouchitte, G. Buttazzo \& P. Seppecher. Energies with respect to a measure and applications to low-dimensional structures. {\em Calc. Var. Partial Differential Equations} {\bf 5} (1997), 37–54.

\bibitem{BF01} G. Bouchitte, I. Fragala. Homogenization of thin structures by two-scale method with respect to measures. {\em SIAM J. Math. Anal.} {\bf 32} (2001), 1198–1226.

\bibitem{bourgeat.luckhaus.mikelic} A. Bourgeat, S. Luckhaus, A. Mikeli\'c. A rigorous result for a double porosity model of immiscible two-phase flow. {\em Comptes Rendus a l’Acad\'emie des Sciences}, {\bf 320} (1994), 1289--1294.

\bibitem{BD98}  A. Braides, A. Defranceschi. \textit{Homogenization of Multiple Integrals}. Oxford Univ. Press, 1998.

\bibitem{brown}
W.F. Brown. {\it Micromagnetics}. J. Wiley, New York, 1963. 

\bibitem{chen12} G. Chen, J. Zang, S. G. te Velthuis, K. Liu, A. Hoffmann, W. Jiang. Skyrmions in magnetic
multilayers. {\em Physics Reports}, {\bf 704} (2017), 1--49.

\bibitem{cicalese13} M. Cicalese, M. Ruf, F. Solombrino. Chirality transitions in frustrated $S2$-valued spin systems. {\em Math.
Models Methods Appl. Sci.} {\bf 26} (2016), 1481--1529.

\bibitem{cicalese14}
M. Cicalese, F. Solombrino. Frustrated ferromagnetic spin chains: a variational approach to chirality
transitions. {\em J. Nonlinear Sci.} {\bf 25} (2015), 291--313.

\bibitem{cicalese15} M Cicalese, M. Forster, G. Orlando. Variational analysis of a two-dimensional frustrated spin system: emergence and
rigidity of chirality transitions. {\em SIAM J. Math. Anal.} {\bf 51} (2019), 4848--4893.

\bibitem{dacorogna.fonseca}
B. Dacorogna, I. Fonseca, J. Mal\'y, K. Trivisa. Manifold constrained variational problems. {\em Calc.
Var. Partial Differential Equations}, {\bf 9} (1999), 185--206.

\bibitem{DM93}  G. Dal Maso. \textit{An introduction to $\Gamma$-convergence}. Volume 8 of Progress in Nonlinear Differential Equations and their Applications, Birkh\"{a}user, Boston, 1993.

\bibitem{dalmaso.modica} G. Dal Maso, L. Modica. Nonlinear stochastic homogenization. {\em Ann. Mat. Pura Appl.} {\bf 144} (1986), 347--389.

\bibitem{dalmaso.modica2} G. Dal Maso, L. Modica. Nonlinear stochastic homogenization and ergodic theory. {\em J. reine angew. Math.} {\bf 368} (1986), 28--42.

\bibitem{DDF20}  E. Davoli, G. Di Fratta. Homogenization of chiral magnetic materials. A mathematical evidence of Dzyaloshinskii's predictions on helical structures \textit{J. Nonlinear Sci.} {\bf 30} (2020), 1229 – 1262.

\bibitem{davoli-difratta-praetorius-ruggeri} E. Davoli, G. Di Fratta, D. Praetorius, M. Ruggeri.
Micromagnetics of thin films in the presence of Dzyaloshinskii-Moriya interaction.
{\em Math. Models Methods Appl. Sci.} {\bf 32} (2022), 911--939.

\bibitem{dzyalo20} I. Dzyaloshinskii. Theory of helicoidal structures in antiferromagnets. i. nonmetals. {\em Sov. Phys. JETP}, {\bf 19}
(1964), 960--971.

\bibitem{dzyalo21} I. Dzyaloshinskii, The theory of helicoidal structures in antiferromagnets. ii. metals. {\em Sov. Phys. JETP},
{\bf 20} (1965).

\bibitem{djano}
I. Dzyaloshinsky. A thermodynamic theory of ``weak'' ferromagnetism of antiferromagnetics. {\it J. Phys. Chem. Solids}, 4 (1958), 241--255. 

\bibitem{ferriani24} P. Ferriani, G. Bihlmayer, O. Pietzsch, S. Heinze, K. von Bergmann, S. Blügel, M. Bode, R. Wiesen-
danger, A. Kubetzka, M. Heide. Chiral magnetic order at surfaces driven by inversion asymmetry. {\em Nature}
{\bf 447} (2007), 190--193.

\bibitem{fert}
A. Fert, V. Cros, J. Sampaio. Skyrmions on the track. {\it Nature Nanotechnology}, 8 (2013), 152--156. 

\bibitem{fert2}
A. Fert, N. Reyren, V. Cros. 
Magnetic skyrmions: Advances in physics and potential applications. {\it Nature Reviews Materials}, 2 (2017). 

\bibitem{fields28}
C. Fields. Anisotropic Superexchange Interaction and Weak Ferromagnetism. {\em Physical Review} {\bf 249} (1956), p. 91.

\bibitem{ginster-zwicknagl23} J. Ginster, B. Zwicknagl. Energy Scaling Law for a Singularly Perturbed Four-Gradient Problem in Helimagnetism. {\em J Nonlinear Sci} {\bf 33} (2023).

\bibitem{haddar.joly}
H. Haddar, P. Joly. Homogenized model for a laminar ferromagnetic medium. {\it Proc. Roy. Soc. Edinburgh Sect. A}, 133 (2003), 567--598. 

\bibitem{heida.neukamm.varga} M. Heida, S. Neukamm, M. Varga. Stochastic homogenization of $\Lambda$-convex gradient flows.
{\em Discrete and Continuous Dynamical Systems - S}, (2020).

\bibitem{dirk}
G. Hrkac, C.-M. Pfeiler, D. Praetorius, M. Ruggeri, A. Segatti, B. Stiftner. Convergent tangent plane integrators for the simulation of chiral magnetic skyrmion dynamics. {\em Adv Comput Math} {\bf 45} (2019), 1329--1368. 

\bibitem{krengel} U. Krengel. {\em Ergodic theorems}, de Gruyter Studies in Mathematics, 6. Walter de Gruyter \& Co., Berlin, 1985.

\bibitem{Li-Melcher35}
X. Li, C. Melcher. Stability of axisymmetric chiral skyrmions. {\em J. Funct. Anal.} {\bf 275} (2018), 2817--2844.

\bibitem{lukkassen.nguetseng.wall}
D. Lukkassen, G. Nguetseng, P. Wall. Two-scale convergence. {\em International Journal of Pure and Applied Mathematics}, {\bf 2} (2002), 35--86.

\bibitem{Melcher38} C. Melcher. Chiral skyrmions in the plane. {\em Proc. Roy. Soc. Edinburgh Sect. A}, {\bf 470} (2014), p. 20140394.

\bibitem{moriya}
T. Moriya. Anisotropic Superexchange Interaction and Weak Ferromagnetism. {\it Physical Review} 120 (1960), 91. 

\bibitem{Muratov42} C. B. Muratov, V. V. Slastikov. Domain structure of ultrathin ferromagnetic elements in the presence of Dzyaloshinskii-Moriya Interaction. {\em Proc. Roy. Soc. Edinburgh Sect. A}, {\bf 473} (2017), p. 20160666.

\bibitem{neukamm.varga} S. Neukamm, M. Varga. Stochastic unfolding and homogenization of spring network models.
{\em Multiscale Modeling \& Simulation}, {\bf 16} (2018), 857--899.

\bibitem{nguetseng} G. Nguetseng. A general convergence result for a functional related to the theory of homogenization. {\em SIAM J. Math. Anal.}, {\bf 20} (1989), 608--623.

\bibitem{papanikolau.varadhan} G. C. Papanicolaou, S. S. Varadhan. Boundary value problems with rapidly oscillating
random coefficients. {\em Random fields}, {\bf 1} (1979), 835--873.

\bibitem{pisante}
G. Pisante. Homogenization of micromagnetics large bodies. {\it ESAIM: Control, Optimisation and Calculus of Variations} 10 (2004), 295--314. 

\bibitem{santugini}
K. Santugini-Repiquet. Homogenization of the demagnetization field operator in periodically perforated domains. {\it J. Math. Anal. Appl.} 334 (2007), 502--516. 

\bibitem{S66}  L. Schwartz. \textit{Th\'{e}orie des distributions}. Volume 1, Hermann, Paris, France, 1966.

\bibitem{yu52} X. Yu, Y. Onose, N. Kanazawa, J. Park, J. Han, Y. Matsui, N. Nagaosa, Y. Tokura. Real-space observation of a two-dimensional skyrmion crystal. {\em Nature}, {\bf 465} (2010), p. 901.

\bibitem{Z00} V. V. Zhikov. On an extension of the method of two-scale convergence and its applications. {\it Mat. Sb.} {\bf 191} (2000), 31–72. English transl., Sb. Math. 191 (2000), 973–1014.

\bibitem{ZKO12}  V. V. Zhikov, S. M. Kozlov, O. A. Oleinik. \textit{Homogenization of differential operators and integral functionals}. Springer Science and Business Media, 2012.

\bibitem{ZP06} V. V. Zhikov, A. L. Pyatnitskii. Homogenization of random singular structures and random measures. {\it Izv. Math.} {\bf 70} (2006), 19-67.

\bibitem{zeppieri} C.I. Zeppieri. Stochastic homogenisation of singularly perturbed integral functionals. {\em Annali di Matematica} {\bf 195} (2016), 2183--2208.
\end {thebibliography}

\end{document}